\newtheorem{lemma}{Lemma} 
\newtheorem{theorem}{Theorem}
\newtheorem{corollary}{Corollary}
\newtheorem{remark}{Remark}
\newtheorem{definition}{Definition}
\newtheorem{example}{Example}
\newtheorem{problem}{Problem}
\begin{document}

\date{}
\title{Symmetrization, factorization and arithmetic of quasi-Banach function
spaces}
\author{Pawe{\l } Kolwicz\thanks{%
Research partially supported by Ministry of Science and Higher Education of
Poland, Grant number 04/43/DSPB/0094}\thinspace , \ Karol Le\'{s}nik$^{\ast
} $ and Lech Maligranda}
\date{}
\maketitle

\begin{abstract}
\noindent We investigate relations between symmetrizations of quasi-Banach
function spaces and constructions such as Calder\'{o}n--Lozanovski{\u{\i}}
spaces, pointwise product spaces and pointwise multipliers. We show that
under reasonable assumptions the symmetrization commutes with these
operations. We determine also the spaces of pointwise multipliers between
Lorentz spaces and Ces\`{a}ro spaces. Developed methods may be regarded as
an arithmetic of quasi-Banach function spaces and proofs of Theorems \ref%
{multip-komut}, \ref{Lorentz-multi} and \ref{mult-cesaro} give a kind of
tutorial for these methods. Finally, the above results will be used in
proofs of some factorization results.
\end{abstract}


\renewcommand{\thefootnote}{\fnsymbol{footnote}}

\footnotetext{%
2010 \textit{Mathematics Subject Classification}: 46E30, 46B20, 46B42.} 
\footnotetext{\textit{Key words and phrases}: Banach ideal spaces,
quasi-Banach ideal spaces, symmetrization operation, Calder\'{o}n spaces, Calder\'{o}%
n--Lozanovski{\u{\i}} spaces, symmetric spaces, pointwise multipliers, pointwise multiplication, product
spaces, Ces\`{a}ro spaces, factorization}


\section{ Introduction and preliminaries}

The functional $x\mapsto \Vert x\Vert $ on a given vector space $X$ is
called a \textit{quasi-norm} if the following three conditions are
satisfied: $\Vert x\Vert =0$ iff $x=0$; $\Vert ax\Vert =|a|\,\Vert x\Vert
,x\in X,a\in \mathbb{R}$; 
there exists $C=C_{X}\geq 1$ such that $\Vert x+y\Vert \leq C(\Vert x\Vert
+\Vert y\Vert )$ for all $x,y\in X$. We call $\Vert \cdot \Vert $ a \textit{$%
p$-norm} where $0<p\leq 1$ if, in addition, it is $p$-subadditive, that is, $%
\Vert x+y\Vert ^{p}\leq \Vert x\Vert ^{p}+\Vert y\Vert ^{p}$ for all $x,y\in
X$.

A very important result here is the \textit{Aoki--Rolewicz theorem} (cf. 
\cite[Theorem 1.3 on p. 7]{KPR84}, \cite[p. 86]{Ma04}, \cite[pp. 6--8]{Ma08}%
): if $0<p\leq 1$ is given by $C=2^{1/p-1}$, then there exists an equivalent 
$p$-norm $\Vert \cdot \Vert _{1}$ so that 
\begin{equation*}
\Vert x+y\Vert _{1}^{p}\leq \Vert x\Vert _{1}^{p}+\Vert y\Vert _{1}^{p}~~%
\mathrm{and}~~\Vert x\Vert _{1}\leq \Vert x\Vert \leq 2C\Vert x\Vert _{1}
\end{equation*}%
for all $x,y\in X$. Precisely, 
\begin{equation*}
\Vert x\Vert _{1}=\inf \{(\sum_{k=1}^{n}\Vert x_{k}\Vert ^{p})^{1/p}\colon
x=\sum_{k=1}^{n}x_{k},x_{1},x_{2},\ldots ,x_{n}\in X,n=1,2,\ldots \}
\end{equation*}%
defines such a $p$-norm on $X$. The quasi-norm $\Vert \cdot \Vert $ induces
a metric topology on $X$: in fact a metric can be defined by $d(x,y)=\Vert
x-y\Vert _{1}^{p}$, when the quasi-norm $\Vert \cdot \Vert _{1}$ is $p$%
-subadditive. We say that $X=(X,\Vert \cdot \Vert )$ is a \textit{%
quasi-Banach space} if it is complete for this metric.

A quasi-normed or normed space $E=\left( E,\Vert \cdot \Vert _{E}\right) $
is said to be a \textit{quasi-normed ideal (function) space} or \textit{%
normed ideal (function) space} on $I$, where $I=(0,1)$ or $I=(0,\infty )$
with the Lebesgue measure $m$, if $E$ is a linear subspace of $L^{0}(I)$ and
satisfies the so-called ideal property, which means that if $y\in E,x\in
L^{0}$ and $|x(t)|\leq |y(t)|$ for almost all $t\in I$, then $x\in E$ and $%
\Vert x\Vert _{E}\leq \Vert y\Vert _{E}$. If, in addition, $E$ is a complete
space, then we say that $E$ is a \textit{quasi-Banach ideal space} or a 
\textit{Banach ideal space (a quasi-Banach function space or a Banach
function space)}, respectively. We assume that $E$ has a \textit{weak unit},
i.e., it has a function $x$ in $E$ which is positive a.e. on $I$ (see \cite%
{KA77} and \cite{Ma89}).

A quasi-normed ideal space $\left( E,\Vert \cdot \Vert _{E}\right) $ is
called \textit{normable} if there exists on $E$ a norm $\Vert \cdot \Vert
_{1}$ equivalent to $\Vert \cdot \Vert _{E},$ that is there are constants $%
A,B>0$ such that $A\Vert x\Vert _{1}\leq \Vert x\Vert _{E}\leq B\Vert x\Vert
_{1}$ for all $x\in E.$

Recall that a quasi-normed ideal space $E$ has the \textit{Fatou property}
if $0 \leq x_n \uparrow x \in L^0$ with $x_n \in E$ and $\sup_{n \in \mathbb{%
N}} \|x_n\|_E < \infty$ 
imply that $x \in E$ and $\|x_n\|_E \uparrow \|x\|_E$. Recall also that $E$
is \textit{order continuous} if for every $x \in E$ and any $x_n \rightarrow
0$ a.e. with $0 \leq x_n \leq |x|$ we have $\| x_n \|_E \rightarrow 0$.

The \textit{K\"{o}the dual} (or \textit{associated space}) $E^{\prime }$ to
a quasi-normed ideal space $E$ on $I$ is the space of all $x\in L^{0}(I)$
such that 
\begin{equation}
\Vert x\Vert _{E^{\prime }}=\sup \left\{ \int_{I}|x(t)y(t)|\,dt\colon \Vert
y\Vert _{E}\leq 1\right\} <\infty .  \label{dual}
\end{equation}%
It may happen that $E^{\prime }=\{0\}$ but if $E^{\prime }\neq \{0\}$ (for
example, when $E$ is a Banach ideal space), then $(E^{\prime },\Vert \cdot
\Vert _{E^{\prime }})$ is a Banach ideal space. Observe that $E^{\prime }$
has the Fatou property and if $E$ is a Banach ideal space, then $E$ has the
Fatou property if and only if $E^{\prime \prime }\equiv E$ (cf. \cite[p. 30]%
{LT79} and \cite{Za83}). For $0<p\leq \infty $ we define the \textit{%
conjugate number $p^{\prime }$} by 
\begin{equation}
~~p^{\prime }:=%
\begin{cases}
1 & ~\mathrm{if}~p=\infty , \\ 
p/(p-1) & ~\mathrm{if}~1<p<\infty , \\ 
\infty , & ~\mathrm{if}~0<p\leq 1. \\ 
\end{cases}
\label{prime}
\end{equation}

The \textit{weighted quasi-normed ideal space} $E(w)$, where $w\colon
I\rightarrow (0,\infty )$ is a measurable function (\textit{weight on $I$}),
is defined by the norm $\Vert x\Vert _{E(w)}=\Vert xw\Vert _{E}$.

By a \textit{symmetric space} on $I$ we mean a (quasi-)normed ideal space $%
E=(E,\Vert \cdot \Vert _{E})$ with the additional property that for any two
equimeasurable functions $x\sim y,x,y\in L^{0}(I)$ (that is, they have the
same distribution functions $d_{x}=d_{y}$, where $d_{x}(\lambda )=m(\{t\in
I\colon |x(t)|>\lambda \}),\lambda \geq 0$) and $x\in E$ we have that $y\in
E $ and $\Vert x\Vert _{E}=\Vert y\Vert _{E}$. In particular, $\Vert x\Vert
_{E}=\Vert x^{\ast }\Vert _{E}$, where $x^{\ast }(t)=\mathrm{inf}\{\lambda
>0\colon \ d_{x}(\lambda )\leq t\},\ t\geq 0$.

A symmetric space $E$ has the \textit{majorant property} if $y\in
E,\int_{0}^{t}x^{\ast }(s)\,ds\leq \int_{0}^{t}y^{\ast }(s)\,ds$ for all $%
t\in I$, then $x\in E$ and $\Vert x\Vert _{E}\leq \Vert y\Vert _{E}$. For
example, a symmetric normed space $E$ with the Fatou property or being order
continuous has the majorant property (cf. \cite[p. 105]{KPS82}).

The \textit{dilation operator} $D_{s},s>0,$ is defined by $D_{s}x(t)=x(t/s)$
for $t\in I=\left( 0,\infty \right) $ and 
\begin{equation*}
D_{s}x(t)=\left\{ 
\begin{array}{ccc}
x(t/s) & \text{if} & t<\min \left\{ 1,s\right\} , \\ 
0 & \text{if} & s\leq t<1,%
\end{array}%
\right.
\end{equation*}%
for $t\in I=\left( 0,1\right) $. This operator is bounded in any symmetric
quasi-normed space $E$ on $\mathrm{I}$ (and $\Vert D_{s}\Vert _{E\rightarrow
E}\leq \max (1,s)$ for symmetric normed spaces, see \cite[pp. 96--98]{KPS82}
for $I=(0,\infty )$ and \cite[p. 130]{LT79} for both cases) and in some
nonsymmetric quasi-normed function spaces.

For two ideal (quasi-) normed spaces on $I$ the symbol $E\overset{C}{%
\hookrightarrow }F$ means that the inclusion $E\subset F$ is continuous with
a norm which is not bigger than C, i.e., $\| x\| _{F} \leq C \| x\|_{E}$ for
all $x \in E$. In the case when the embedding $E\overset{C}{\hookrightarrow }%
F$ holds with some (unknown) constant $C>0$ we simply write $%
E\hookrightarrow F$. Moreover, $E = F$ (and $E\equiv F$) means that the
spaces are the same and the norms are equivalent (equal).

More information about normed or Banach ideal spaces and symmetric spaces
can be found, for example, in the books \cite{BS88}, \cite{KA77}, \cite%
{KPS82} and \cite{LT79}. Moreover, information on quasi-normed spaces,
quasi-normed function spaces and symmetric spaces we can find, for example,
in the books \cite{KPR84}, \cite{ORS08} and the papers \cite{Ka84}, \cite%
{KR09}, \cite{Ma04}, \cite{Na17}. 

By $\mathcal{P}$ we denote the set of concave nondecreasing functions $\rho
_{0}\colon \lbrack 0,\infty )\rightarrow \lbrack 0,\infty )$ which are 0
only at 0 and 
we identify $\mathcal{P}$ with set of functions $\rho \colon \lbrack
0,\infty )\times \lbrack 0,\infty )\rightarrow \lbrack 0,\infty )$ by
putting $\rho (s,t)=s\rho _{0}(t/s)$ for $s>0$ and 0 for $s=0$.

For two normed ideal spaces $E,F$ on $I$ and $\rho \in \mathcal{P}$ the 
\textit{Calder\'{o}n-Lozanovski{\u{\i}} space} (\textit{construction}) $\rho
(E,F)$ is defined as the set of all $x\in L^{0}(I)$ such that for some $%
x_{0}\in E,x_{1}\in F$ with $\Vert x_{0}\Vert _{E}\leq 1,\Vert x_{1}\Vert
_{F}\leq 1$ and for some $\lambda >0$ we have $|x|\leq \lambda \,\rho
(|x_{0}|,|x_{1}|)$ a.e. on $I$. The norm $\Vert x\Vert _{\rho }=\Vert x\Vert
_{\rho (E,F)}$ of an element $x\in \rho (E,F)$ is defined as the infimum
values of $\lambda $ for which the above inequality holds. It can be shown
that 
\begin{equation*}
\rho (E,F)=\left\{ x\in L^{0}(I)\colon |x|\leq \rho (|x_{0}|,|x_{1}|)~%
\mathrm{a.e.~on}~I\mathrm{~for~some}~x_{0}\in E,x_{1}\in F\right\}
\end{equation*}%
and 
\begin{equation*}
\Vert x\Vert _{\rho \left( E,F\right) }=\inf \left\{ \max \left\{ \Vert
x_{0}\Vert _{E},\Vert x_{1}\Vert _{F}\right\} \colon |x|\leq \rho
(|x_{0}|,|x_{1}|)~\mathrm{a.e.~on}~I\text{,}\ x_{0}\in E,x_{1}\in F\right\} 
\text{.}
\end{equation*}%
If $\rho (u,v)=u^{\theta }\,v^{1-\theta }$ with $0<\theta <1$ we write $%
E^{\theta }F^{1-\theta }$ instead of $\rho (E,F)$ and these are Calder\'{o}n
spaces (Calder\'{o}n product) defined already in 1964 in \cite{Ca64}.
Another important situation, investigated by Calder\'{o}n and independently
by Lozanovski{\u{\i}} in 1964, appears when we put $F\equiv L^{\infty }$
(see \cite{Ca64}, \cite{Lo73}, \cite{Lo78}). We can see that they are
generalizations of Orlicz spaces. Moreover, the \textit{$p$-convexification} 
$E^{(p)}$ of $E$, for $1<p<\infty $, is a special case of Calder\'{o}n
product 
\begin{equation*}
E^{1/p}(L^{\infty })^{1-1/p}=E^{(p)}=\{x\in L^{0}\colon |x|^{p}\in E\}~~%
\mathrm{and}~~\Vert x\Vert _{E^{(p)}}=\Vert |x|^{p}\Vert _{E}^{1/p}.
\end{equation*}%
More information on the Calder\'{o}n--Lozanovski{\u{\i}} spaces can be found
in the books \cite{KPS82}, \cite{Ma89}.

For two quasi-normed ideal spaces $E,F$ on $I$ we can define similarly the
Calder\'{o}n--Lozanovski{\u{\i}} space (construction) $\rho (E,F)$ obtaining
the quasi-normed ideal space (cf. \cite{KMP03} and \cite{Ni85}). Also the
definition of $p$-convexification makes sense even for $0<p<\infty $. Note
also that $E^{(p)}$ may be just a quasi-normed ideal space for $0<p<1$ even
if $E$ is a normed ideal space.

Consider the Hardy operator $H$ and its formal K\"{o}the dual $H^{\ast }$
defined for $x\in L^{0}(I)$ by 
\begin{equation*}
Hx(t)=\frac{1}{t}\int_{0}^{t}x(s)\,ds,~H^{\ast }x(t)=\int_{t}^{l}\frac{x(s)}{%
s}\,ds~\mathrm{with}~l=m(I),~t\in I.
\end{equation*}%
Note that if $0<p<1$, then neither $H$ nor $H^{\ast }$ are bounded on $%
L^{p}(w)$ spaces for any weight $w$ (cf. \cite[p. 41]{KMP07}), therefore we
need to consider their \textquotedblleft r-convexifications" for $0<r<\infty 
$, which are defined by 
\begin{equation*}
H_{r}x=[H(|x|^{r})]^{1/r}~~\mathrm{and}~~H_{r}^{\ast }x=[H^{\ast
}(|x|^{r})]^{1/r},
\end{equation*}%
provided the corresponding integrals are finite. These operators are not
linear but they are $c$-sublinear, that is, 
\begin{equation*}
H_{r}(\lambda x)=|\lambda |H_{r}x~~\mathrm{and}~~H_{r}(x+y)\leq
c\,(H_{r}x+H_{r}y)
\end{equation*}%
and similarly for the operator $H_{r}^{\ast }$, where $c=\max (1,2^{1/r-1})$.

In the case $w(t)=t^{\alpha },\alpha \in \mathbb{R}$ 
and $0<p\leq \infty $ it is easy to prove that if $r\leq p$ and $r(\alpha
+1/p)<1$, then $H_{r}$ is bounded on $L^{p}(w)$ with the norm $\leq
(1-\alpha r-r/p)^{-1/r}$ (see \cite[Theorem 2(i)]{KMP07}). Also if $r\leq p$
and $\alpha +1/p>0$, then $H_{r}^{\ast }$ is bounded on $L^{p}(w)$ with the
norm $\leq (\alpha r+r/p)^{-1/r}$.

Using the Fubini theorem we obtain the following equality 
\begin{equation}
H_{r}H_{r}^{\ast }x(t)=\left[ H_{r}x(t)^{r}+H_{r}^{\ast }x(t)^{r}\right]
^{1/r},~~\mathrm{for}~~t\in I.  \label{equalityHardy}
\end{equation}%
In fact, 
\begin{eqnarray*}
H_{r}H_{r}^{\ast }x(t)^{r} &=&\frac{1}{t}\int_{0}^{t}H_{r}^{\ast
}x(s)^{r}\,ds=\frac{1}{t}\int_{0}^{t}(\int_{s}^{l}\frac{|x(u)|^{r}}{u}%
\,du)\,ds \\
&=&\frac{1}{t}\int_{0}^{t}(\int_{0}^{u}ds)\frac{|x(u)|^{r}}{u}\,du+\frac{1}{t%
}\int_{t}^{l}(\int_{0}^{t}ds)\frac{|x(u)|^{r}}{u}\,du \\
&=&\frac{1}{t}\int_{0}^{t}|x(u)|^{r}du+\int_{t}^{l}\frac{|x(u)|^{r}}{u}%
\,du=H_{r}x(t)^{r}+H_{r}^{\ast }x(t)^{r}.
\end{eqnarray*}%
For two quasi-normed ideal spaces $E,F$ on $I$ the \textit{product space} $%
E\odot F$ is 
\begin{equation*}
E\odot F=\left\{ u\in L^{0}(I)\colon u=x\cdot y~~\mathrm{for~some}~x\in E~%
\mathrm{and}~y\in F\right\} ,
\end{equation*}%
and for $u\in E\odot F$ we put 
\begin{equation*}
\Vert u\Vert _{E\odot F}=\inf \{\Vert x\Vert _{E}\Vert y\Vert _{F}\colon
u=x\cdot y,x\in E,y\in F\}.
\end{equation*}%
First note that the product $E\odot F$ is a linear space thanks to the ideal
property of $E$ and $F$ (see \cite{KLM14}). The space $(E\odot F,\Vert \cdot
\Vert _{E\odot F})$ is a quasi-normed ideal space on $I$ (even if $E,F$ are
normed spaces). More about product spaces with some computations can be
found in \cite{CS14} and \cite{KLM14} ({see also \cite{Bu87,BG87} for the
case of sequence spaces)}.

The space of (pointwise) \textit{multipliers} $M(E,F)$ is defined as 
\begin{equation*}
M\left( E,F\right) =\left\{ x\in L^{0}\colon xy\in F\text{ for each }y\in
E\right\}
\end{equation*}%
with the operator norm 
\begin{equation*}
\left\Vert x\right\Vert _{M\left( E,F\right) }=\sup_{\left\Vert y\right\Vert
_{E}=1}\left\Vert xy\right\Vert _{F}.
\end{equation*}%
Properties and several examples of above constructions are presented in \cite%
{KLM13}, \cite{KLM14}, \cite{MP89}, \cite{Na17}.

We collect below several simple and useful facts.

\begin{remark}
\label{uwagi-rozne} Let $E$ be a quasi-Banach ideal space.

\begin{itemize}
\item[$(i)$] If $D_{2}$ is bounded on $E$, then $D_{2}$ is bounded on $%
E^{(p)}$ for each $p>0$ and $\| D_2\|_{E^{(p)} \rightarrow E^{(p)}} \leq \|
D_2\|_{E \rightarrow E}^{1/p}$.

\item[$(ii)$] If $D_{2}$ is bounded on $E,F$, then $D_{2}$ is bounded on $%
E\odot F$ and $\| D_2\|_{E\odot F \rightarrow E\odot F} \leq \| D_2\|_{E
\rightarrow E} \| D_2\|_{F \rightarrow F}$.

\item[$(iii)$] If $H$ is bounded on $E$, then $H$ is bounded on $E^{(p)}$
for all $p>1$ and $\| H \|_{E^{(p)} \rightarrow E^{(p)}} \leq \| H \|_{E
\rightarrow E}^{1/p}$.
\end{itemize}

\noindent In the case when $E$ is a Banach ideal space, then we also have

\begin{itemize}
\item[$(iv)$] $H$ is bounded on $E$ if and only if $H^{*}$ is bounded on $%
E^{\prime }$ and $\| H \|_{E \rightarrow E} = \| H^{*} \|_{E^{\prime}
\rightarrow E^{\prime}}$.

\item[$(v)$] $D_{p}$ is bounded on $E$ if and only if $D_{1/p}$ is bounded
on $E^{\prime }$ for each $p>0$ and $\Vert D_{p}\Vert _{E\rightarrow
E}=\Vert D_{1/p}\Vert _{E^{\prime }\rightarrow E^{\prime }}$.
\end{itemize}
\end{remark}

\begin{proof} (i) It follows by the definition. (ii) By the assumption, 
$D_{2}$ is bounded on $E^{1/2}F^{1/2}$ (see \cite[Theorem 15.13, p.190]{Ma89}). 
Since $E\odot F=(E^{1/2}F^{1/2})^{(1/2)}$ (see \cite[Theorem 1]{KLM14}),
the conclusion follows by (i). (iii) Since $H$ is bounded on $L^{\infty }$,
it is enough to apply the equality $E^{(p)}=E^{1/p}(L^{\infty })^{1-1/p}$.
(iv) The necessity follows from the equality $\int (H^{\ast }x)y = \int xHy$
for each $x\in E^{\prime}$ and $y \in E$. (v) The proof comes from the
definition of the K\"{o}the dual.
\end{proof}

The paper is organized as follows: In Section 2 we define symmetrization $%
E^{(\ast )}$ of a quasi-normed ideal space $E$ on $I=(0,1)$ or $I=(0,\infty
) $ and collect some preliminary properties.

In Section 3 we investigate a commutativity property of symmetrization
operation $E\mapsto E^{\left( \ast \right) }$ with some known constructions,
like the sum of the spaces $E+F$, the Calder\'{o}n-Lozanovski\u{\i}
construction $\rho (E,F)$, the pointwise product $E\odot F$ and the K\"{o}the
duality $E^{\prime }$. In Theorem \ref{CLcommutstar}, we found conditions
under which $\rho (E,F)^{(\ast )}=\rho (E^{(\ast )},F^{(\ast )})$, in
particular, when $(E+F)^{(\ast )}=E^{(\ast )}+F^{(\ast )}$. Then, in Theorem %
\ref{dual-comut}, we prove that $(E^{\prime })^{(\ast )}=(E^{(\ast
)})^{\prime }$ under additional assumption on $E,$ which is essential (see
Example \ref{Ex4}), that is, the K\"{o}the duality does not commute with
symmetrization, in general for Banach ideal spaces.

In Section 4 we give sufficient conditions under which the space of
pointwise multipliers $M(E,F)$ commutes with the symmetrization operation $%
E\mapsto E^{\left( \ast \right) },$ that is $M(E,F)^{\left( \ast \right)
}=M(E^{\left( \ast \right) },F^{\left( \ast \right) })$ (Theorem \ref%
{multip-komut}). We also fully identify the space of pointwise multipliers
for classical Lorentz spaces $M(L^{p_{1},q_{1}},L^{p_{2},q_{2}})$ (Theorem \ref{Lorentz-multi}).

In Section 5 the notion of the explicit factorization for product space $E
\odot F$ is introduced. In Theorem 5 we proved that under some assumptions
on quasi-Banach ideal spaces $E, F$ from the explicit factorization for $G =
E \odot F$ it follows equality for symmetrizations $G^{(*)} = E^{(*)} \odot
F^{(*)}$ and the explicit factorization holds.

In section 6 we prove that, under some assumptions, from the factorization $%
F=E\odot M\left( E,F\right) $ we can conclude the factorization of
respective symmetrizations $F^{(\ast )}=E^{(\ast )}\odot M\left( E^{(\ast
)},F^{(\ast )}\right)$.

Finally, in Section 7, the space of multipliers and factorization of Ces\`{a}%
ro spaces is presented in Theorem \ref{mult-cesaro} and Corollary \ref%
{faktor-Cesaro}.


\section{Symmetrization of quasi-normed ideal spaces}


Let $E = \left( E,\| \cdot \|_{E}\right) $ be a \textit{quasi-normed ideal
space} on $I$. The \textit{symmetrization} $E^{(\ast )}$ of $E$ is defined
as 
\begin{equation}  \label{sym}
E^{(\ast )} = \{x \in L^0(I) \colon x^* \in E \}
\end{equation}
with the functional $\| x\|_{E^{(\ast )}} = \| x^\ast \|_E$. For two
quasi-normed ideal spaces $E, F$ on $I$ it follows directly from the
definition that: 
\begin{equation}  \label{equality4}
E\overset{C}{\hookrightarrow }F ~ \mathrm{implies} ~~ E^{(*)} \overset{C}{%
\hookrightarrow }F^{(*)} ~~\mathrm{and} ~~ (E \cap F)^{(*)} = E^{(*)} \cap
F^{(*)}.
\end{equation}

It may happen that symmetrization is trivial, that is, $E^{(\ast )} = \{0\}$%
, as it is for example if $E = L^1(1/t)$ or $E= L^{\infty}(1/t)$. It is easy
to see that $E^{(\ast )} \neq \{0\}$ if and only if $\chi_{(0, a)} \in E$
for some $a > 0$.

A. Kami\'{n}ska and Y. Raynaud proved in \cite[Lemma 1.4]{KR09} that the
functional $\Vert \cdot \Vert _{E^{(\ast )}}$ is a quasi-norm if and only if
there is a constant $1\leq A<\infty $ such that 
\begin{equation}
\Vert D_{2}x^{\ast }\Vert _{E}\leq A\,\Vert x^{\ast }\Vert _{E}~~\mathrm{%
for~all}~~x^{\ast }\in E,  \label{dil}
\end{equation}%
and then $(E^{(\ast )},\Vert \cdot \Vert _{E^{(\ast )}})$ is a quasi-normed
symmetric space. The smallest possible constant $A$ in (\ref{dil}) we denote
by $A_{E}$. Furthermore, if $\left( E,\Vert \cdot \Vert _{E}\right) $ is a
quasi-Banach space, then $(E^{(\ast )},\Vert \cdot \Vert _{E^{(\ast )}})$ is
quasi-Banach too (see \cite[Lemma 1.4]{KR09}).

Condition (\ref{dil}) means that the dilation operator $D_{2}$ is bounded on
the cone of nonnegative nonincreasing elements $x=x^{\ast }\in E$. It
implies in particular that $E^{(\ast )}$ is a linear space which follows
immediately from the inequality $\left( x+y\right) ^{\ast }\left( t\right)
\leq x^{\ast }\left( t/2\right) +y^{\ast }\left( t/2\right) .$ However, it
is not obvious whether the linearity of $E^{(\ast )}$ gives the condition (%
\ref{dil}). We show that these conditions are in fact equivalent. Denote by%
\begin{equation*}
E^{\downarrow }\text{ -- the cone of nonnegative and nonincreasing elements }%
x=x^{\ast }\in E.\text{ }
\end{equation*}

\begin{lemma}
Let $E$ be a \textit{quasi-normed ideal space} on $I$. Then $E^{(\ast )}$ is
a linear space if and only if $D_{2}x\in E^{\downarrow }$ for each $x\in
E^{\downarrow }.$
\end{lemma}

\proof

The sufficiency follows from the inequality $\left( x+y\right) ^{\ast
}\left( t\right) \leq x^{\ast }\left( t/2\right) +y^{\ast }\left( t/2\right)
.$ We prove the necessity. Suppose there is an element $x\in E^{\downarrow }$
such that $D_{2}x\notin E^{\downarrow }.$ Thus, $x=x^{\ast }\in E$ and $%
\left( D_{2}x\right) ^{\ast }=D_{2}x,$ whence $D_{2}x\notin E.$ We consider
two cases.

$\left( 1\right) $ Let $I=\left( 0,\infty \right) .$ Set $%
A=\cup_{k=0}^{\infty }(2k,2k+1]$ and $A^{\prime }=\left( 0,\infty \right)
\backslash A.$ Let $\sigma _{1}:A\rightarrow \left( 0,\infty \right) $ and $%
\sigma _{2}:A^{\prime }\rightarrow \left( 0,\infty \right) $ be measure
preserving transformations. Define 
\begin{equation*}
x_{1}=x\circ \sigma _{1}\text{ and }x_{2}=x\circ \sigma _{2}.
\end{equation*}%
Then $x_{1},x_{2}\in E^{(\ast )},$ because $x_{1}^{\ast }=x_{2}^{\ast
}=x^{\ast }=x\in E.$ Moreover, since $x_{1}\bot x_{2},$ so $%
d_{x_{1}+x_{2}}=2d_{x}$ and $\left( x_{1}+x_{2}\right) ^{\ast }\left(
t\right) =x^{\ast }\left( t/2\right) =D_{2}x\notin E.$ It means that $%
E^{(\ast )}$ is not a linear space.

$\left( 2\right) $ Assume that $I=\left( 0,1\right) .$ Let $%
x_{1}=D_{1/2}D_{2}x.$ Then $\mathrm{{supp} \hspace{1mm} x_{1}\subset \left(
0,1/2\right) }$ and $x_{1}=x_{1}^{\ast }\leq x\in E.$ Moreover, $%
D_{2}x_{1}=D_{2}D_{1/2}D_{2}x=D_{2}x\notin E.$ Take%
\begin{equation*}
x_{2}\left( t\right) =\left\{ 
\begin{array}{ccc}
x_{1}\left( t-1/2\right) & \text{for} & t\in (1/2,1), \\ 
0 & \text{if} & t\in \left( 0,1/2\right) .%
\end{array}%
\right.
\end{equation*}%
Thus $x_{2}^{\ast }=x_{1}^{\ast }\in E$ and consequently $x_{1},x_{2}\in
E^{(\ast )}.$ On the other hand, $\left( x_{1}+x_{2}\right) ^{\ast }\left(
t\right) =x^{\ast }\left( t/2\right) =D_{2}x\notin E.$ Thus again $E^{(\ast
)}$ is not a linear space.\endproof

\begin{lemma}
Assume that $E$ is a \textit{quasi-normed ideal space} on $I.$ If $D_{2}x\in
E^{\downarrow }$ for each $x\in E^{\downarrow },$ then there is a constant $%
1\leq A<\infty $ such that $\Vert D_{2}x\Vert _{E}\leq A\,\Vert x\Vert _{E}~~%
\mathrm{for~all}~~x\in E^{\downarrow }$.
\end{lemma}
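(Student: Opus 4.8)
The plan is to argue by contradiction: assuming no uniform constant exists, I construct a single function $x\in E^{\downarrow}$ for which $D_{2}x\notin E$, contradicting the standing hypothesis that $D_{2}$ maps $E^{\downarrow}$ into $E^{\downarrow}$. Two structural facts drive the argument. First, by the ideal property $D_{2}$ is \emph{monotone} on the cone: if $0\le u\le v$ with $u,v\in E^{\downarrow}$, then $0\le D_{2}u\le D_{2}v$, so $\Vert D_{2}u\Vert _{E}\le \Vert D_{2}v\Vert _{E}$; moreover $D_{2}$ is positively homogeneous and additive on $E^{\downarrow}$. Second, by the Aoki--Rolewicz theorem I may assume, after replacing $\Vert \cdot \Vert _{E}$ by an equivalent $p$-norm (for some $0<p\le 1$), that $\Vert \cdot \Vert _{E}$ is $p$-subadditive; the cone $E^{\downarrow}$, the monotonicity above, and the conclusion (which is a bound up to a constant) are all insensitive to this equivalence. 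If $E^{\downarrow}=\{0\}$ the statement is vacuous, so I assume $E^{\downarrow}\neq \{0\}$.

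Negating the conclusion means $\sup \{\Vert D_{2}x\Vert _{E}\colon x\in E^{\downarrow},\ \Vert x\Vert _{E}\le 1\}=\infty $. Hence for each $n$ I may pick $x_{n}\in E^{\downarrow}$ with $\Vert x_{n}\Vert _{E}\le 1$ and $a_{n}:=\Vert D_{2}x_{n}\Vert _{E}\ge 2^{2n/p}$. I then form the \emph{vertical stacking} $x:=\sum_{n=1}^{\infty }c_{n}x_{n}$ with scalars $c_{n}:=a_{n}^{-1/2}$. Since each $x_{n}=x_{n}^{\ast }$ is nonnegative and nonincreasing, every partial sum $s_{N}=\sum_{n\le N}c_{n}x_{n}$ lies in $E^{\downarrow}$, and $p$-subadditivity gives $\Vert s_{N}\Vert _{E}^{p}\le \sum_{n}c_{n}^{p}\Vert x_{n}\Vert _{E}^{p}\le \sum_{n}a_{n}^{-p/2}\le \sum_{n}2^{-n}=1$, uniformly in $N$. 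The same estimate makes $\{s_{N}\}$ Cauchy, while $s_{N}\uparrow x$ pointwise with $x=x^{\ast }$, so $x$ is a legitimate element of $E^{\downarrow}$.

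The contradiction is then immediate. By hypothesis $D_{2}x\in E^{\downarrow}\subset E$, so $\Vert D_{2}x\Vert _{E}<\infty $. On the other hand $x\ge c_{N}x_{N}\ge 0$ pointwise for every $N$, whence $D_{2}x\ge c_{N}D_{2}x_{N}\ge 0$ by monotonicity, and the ideal property yields $\Vert D_{2}x\Vert _{E}\ge c_{N}a_{N}=a_{N}^{1/2}\ge 2^{N/p}\to \infty $. This is impossible, proving the asserted bound. I note that, unlike in the proof of Lemma~1, no case distinction between $I=(0,\infty )$ and $I=(0,1)$ is required: the mass is stacked near $0$, and $D_{2}x(t)=x(t/2)$ holds on all of $I$ in both cases.

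The step I expect to be delicate is precisely the passage from the partial sums $s_{N}$ to the limit $x$ as a genuine element of $E^{\downarrow}$. The uniform bound makes $\{s_{N}\}$ Cauchy, but concluding $x\in E$ requires completeness of $E$; when $E$ is quasi-Banach this is automatic. For a merely quasi-normed $E$ one would instead invoke the Fatou property (or pass to the norm completion and transfer back via the ideal structure) to ensure that the monotone pointwise limit $x=\sup_{N}s_{N}$ belongs to $E$ with $\Vert x\Vert _{E}=\lim_{N}\Vert s_{N}\Vert _{E}<\infty $. Everything else—the monotonicity of $D_{2}$ on the cone and the lower bound $\Vert D_{2}x\Vert _{E}\ge c_{N}a_{N}$—is elementary once $x$ is in hand.
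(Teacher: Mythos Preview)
Your argument is essentially the paper's: both proceed by contradiction, form a weighted sum $x=\sum_{n} c_{n}x_{n}$ of bad elements that lies in $E^{\downarrow}$, and obtain the contradiction $\Vert D_{2}x\Vert_{E}=\infty$ from the pointwise lower bound $x\ge c_{N}x_{N}$. The paper differs only cosmetically---it works directly with the quasi-triangle constant $C$ and weights $(2C)^{-n}$, citing Theorem~1.1 of \cite{Ma04} for the convergence step you flag as delicate, rather than passing to an equivalent $p$-norm via Aoki--Rolewicz.
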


\proof Suppose the condition is not satisfied, that is, we find a sequence $%
\left( x_{n}\right) $ in $E^{\downarrow }$ such that $\Vert x_{n}\Vert
_{E}=1 $ and $\Vert D_{2}x_{n}\Vert _{E}\geq n\left( 2C\right) ^{n},$ where
the constant $C$ is from the quasi-triangle inequality of $E$. Let $%
y=\sum_{n=1}^{\infty }\left( 2C\right) ^{-n}x_{n}.$ Since $%
\sum_{n=1}^{\infty }C^{n}\left\Vert \left( 2C\right) ^{-n}x_{n}\right\Vert
<\infty ,$ by Theorem 1.1 from \cite{Ma04}, we conclude that $y\in E.$
Obviously, $y\in E^{\downarrow }.$ Furthermore, 
\begin{equation*}
D_{2}y=D_{2}\left( \sum_{n=1}^{\infty }\left( 2C\right) ^{-n}x_{n}\right)
\geq D_{2}\left( \left( 2C\right) ^{-n}x_{n}\right) =\left( 2C\right)
^{-n}D_{2}x_{n}\text{ for each }n.
\end{equation*}%
Thus $\Vert D_{2}y\Vert _{E}\geq \left( 2C\right) ^{-n}\Vert D_{2}x_{n}\Vert
_{E}\geq \left( 2C\right) ^{-n}n\left( 2C\right) ^{n}=n$ for each $n.$ Since 
$\left( D_{2}y\right) ^{\ast }=D_{2}y,$ it follows that $D_{2}y\notin
E^{\downarrow }.$\endproof

Now we are ready to conclude a stronger and more complete characterization
than it has been presented in \cite[Lemma 1.4]{KR09}.

\begin{corollary}
\label{quasi-liniowosc}Let $E$ be a \textit{quasi-normed ideal space} on $I$%
. The following statements are equivalent:\newline
$\left( i\right) $ $E^{(\ast )}$ is a linear space.\newline
$\left( ii\right) $ For each $x\in E^{\downarrow }$ we have $D_{2}x\in
E^{\downarrow }.$\newline
$\left( iii\right) $ There is a constant $1\leq A<\infty $ such that $\Vert
D_{2}x\Vert _{E}\leq A\,\Vert x\Vert _{E}~~\mathrm{for~all}~~x\in
E^{\downarrow }$.\newline
$\left( iv\right) $ $(E^{(\ast )},\Vert \cdot \Vert _{E^{(\ast )}})$ is a
quasi-normed space.
\end{corollary}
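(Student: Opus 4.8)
The plan is to establish the equivalence of the four statements by proving a cycle of implications, leaning heavily on the two preceding lemmas which already do most of the work. Specifically, Lemma~1 gives the equivalence $(i)\Leftrightarrow(ii)$ directly, and Lemma~2 supplies the implication $(ii)\Rightarrow(iii)$. Thus the strategy is to chain these together and close the loop with the two routine implications $(iii)\Rightarrow(iv)$ and $(iv)\Rightarrow(i)$, so that all four statements become mutually equivalent.

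First I would record that $(i)\Leftrightarrow(ii)$ is exactly the content of Lemma~1, so nothing new is needed there. Next, $(ii)\Rightarrow(iii)$ is precisely Lemma~2, which produces the uniform constant $A$ from the pointwise cone-invariance hypothesis. For $(iii)\Rightarrow(iv)$ I would argue that the quantitative bound $\Vert D_2 x\Vert_E\le A\Vert x\Vert_E$ on $E^{\downarrow}$ is exactly condition~(\ref{dil}) (observe that for $x\in E^{\downarrow}$ we have $x=x^{\ast}$, so the displayed inequality in (iii) and the inequality $\Vert D_2 x^{\ast}\Vert_E\le A\Vert x^{\ast}\Vert_E$ coincide). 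By the cited result of Kami\'{n}ska and Raynaud \cite[Lemma~1.4]{KR09}, condition~(\ref{dil}) guarantees that $\Vert\cdot\Vert_{E^{(\ast)}}$ is a genuine quasi-norm, which is statement~(iv).

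Finally, for $(iv)\Rightarrow(i)$ I would simply note that a quasi-normed space is by definition a vector space equipped with a quasi-norm, so if $(E^{(\ast)},\Vert\cdot\Vert_{E^{(\ast)}})$ is a quasi-normed space then in particular $E^{(\ast)}$ is a linear space; this is immediate and requires no computation. Assembling the four steps yields the cycle $(i)\Rightarrow(ii)\Rightarrow(iii)\Rightarrow(iv)\Rightarrow(i)$, with $(i)\Leftrightarrow(ii)$ already bidirectional from Lemma~1, which proves the full equivalence.

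I do not expect a serious obstacle here, since the corollary is essentially a repackaging of Lemmas~1 and~2 together with the Kami\'{n}ska--Raynaud characterization. The only point requiring a little care is the identification of statement~(iii) with the formal condition~(\ref{dil}): one must remember that on the cone $E^{\downarrow}$ every element already equals its own decreasing rearrangement, so that the restriction to $x^{\ast}\in E$ in~(\ref{dil}) is the same as the restriction to $x\in E^{\downarrow}$ in~(iii). Once this identification is made explicit, invoking \cite[Lemma~1.4]{KR09} closes the argument cleanly.
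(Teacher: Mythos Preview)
Your proposal is correct and follows essentially the same approach as the paper: the paper also invokes the two preceding lemmas for $(i)\Leftrightarrow(ii)\Leftrightarrow(iii)$ and then cites \cite[Lemma~1.4]{KR09} for the equivalence $(iii)\Leftrightarrow(iv)$. Your only cosmetic difference is closing the loop via the trivial implication $(iv)\Rightarrow(i)$ rather than citing both directions of the Kami\'{n}ska--Raynaud result, which is perfectly fine.
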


\proof The equivalences $\left( i\right) \Leftrightarrow \left( ii\right)
\Leftrightarrow \left( iii\right) $ come from the above two lemmas. The last
equivalence $\left( iii\right) \Leftrightarrow \left( iv\right) $ has been
proved in \cite[Lemma 1.4]{KR09}.\endproof

It is worth to mention that the equivalence $\left( i\right) \Leftrightarrow
\left( iv\right) $ for the Lorentz spaces $\Lambda _{p,w^{p}}$ is already
known. Namely, each of conditions $\left( i\right) $, $\left( iv\right) $
is equivalent to $W\in \Delta _{2}$ which has been proved in \cite{CKMP} and 
\cite{KM04}, respectively (see the disscusion following Problem \ref{P1} in
Section 4 for the respective definitions).

Of course, if $E^{\left( \ast \right) }=\{0\},$ then condition (\ref{dil})
is satisfied trivially. We present some examples when $E^{\left( \ast
\right) }\neq \{0\}$ and condition (\ref{dil}) does not hold (equivalently
none of conditions from\ Corollary \ref{quasi-liniowosc} is satisfied).


\begin{example}
\label{Ex1} \rm{(a) Consider $E=L^{\infty }\left( \frac{1}{1-t}\right) $
on $I=(0,1)$. Then, taking $x=\chi _{\left( 0,1/2\right) }$ we have $%
x=x^{\ast }\in E,$ whence $E^{\left( \ast \right) }\neq \{0\}$. Moreover, $%
D_{2}x=\chi _{\left( 0,1\right) }\notin E$. }

\rm{(b) For $a>0$ let 
\begin{equation*}
w_{a}(t)=\chi _{(0,a)}(t)+\frac{1}{t-a}\chi _{(a,\infty )}(t),~t>0.
\end{equation*}%
Consider the weighted Banach ideal space $E=L^{1}(w_{a})$ on $I=(0,\infty )$
and its symmetrization $E^{(\ast )}=[L^{1}(w_{a})]^{(\ast )}=\Lambda
_{1,w_{a}}$. Of course, $E^{(\ast )}\neq \{0\}$ since $\chi _{(0,a)}\in
E^{(\ast )}$ with $\Vert \chi _{(0,a)}\Vert _{E^{(\ast )}}=a$. On the other
hand, 
\begin{equation*}
\Vert D_{2}\chi _{(0,a)}\Vert _{E}=a+\int_{a}^{2a}\frac{1}{t-a}\,dt=\infty ,
\end{equation*}%
which means that $\Vert \cdot \Vert _{E^{(\ast )}}$ is not a quasi-norm (see 
\cite[Lemma 1.4]{KR09}), equivalenty, none of conditions from\ Corollary \ref%
{quasi-liniowosc} is satisfied. For example, $W_{a}(t)=%
\int_{0}^{t}w_{a}(s)ds=t$ for $0<t<a$ and $W_{a}(t)=\infty $ for $t>a$. Thus 
$W_{a}\notin \Delta _{2}$ and by Remark 1.3 in \cite{CKMP} we get that $%
E^{(\ast )}=\Lambda _{1,w_{a}}$ is not a linear space. In fact, $x=\chi _{(0,%
\frac{a}{2})}\in \Lambda _{1,w_{a}},y=\chi _{(\frac{a}{2},\frac{5a}{4})}\in
\Lambda _{1,w_{a}}$, but $x+y=x=\chi _{(0,\frac{5a}{4})}\notin \Lambda
_{1,w_{a}}$. }

\rm{(c) Let $E=L^{1}(w)$ with $w(t)=e^{t}$ on $I=(0,\infty )$. Then $%
W(t)=\int_{0}^{t}w(s)ds=e^{t}-1<\infty $ for any $t\in I$. Clearly, $%
E^{(\ast )}\neq \{0\}$ but 
\begin{equation*}
\frac{\Vert D_{2}\chi _{(0,n)}\Vert _{E}}{\Vert \chi _{(0,n)}\Vert _{E}}=%
\frac{e^{2n}-1}{e^{n}-1}\rightarrow \infty ~~as~~n\rightarrow \infty .
\end{equation*}
}
\end{example}


The following lemma completes the above disscusion of the case $E^{(\ast
)}\neq \{0\}$.

\begin{lemma}
\label{nonzero} Let $E$ be a quasi-Banach ideal space $E$ on $I$ such that
the dilation operator $D_{2}$ is bounded on $E^{\downarrow }$.

\begin{itemize}
\item[$(i)$] If $E^{( \ast)}\neq \{0\}$, then $\chi_{(0, a)} \in E$ for each 
$a > 0$ and $E^{( \ast)}$ has a weak unit.

\item[$(ii)$] Let $I = (0, 1)$. Then $E^{( \ast)}\neq \{0\}$ if and only if $%
L^{\infty }\hookrightarrow E$.

\item[$(iii)$] Let $I = (0,\infty )$. Then $E^{( \ast) }\neq \{0\}$ if and
only if $L_{b}^{\infty }\hookrightarrow E$, where $L_{b}^{\infty }$ is the
linear subspace of $L^{\infty }$ consisting of all essentially bounded
functions with bounded support.
\end{itemize}
\end{lemma}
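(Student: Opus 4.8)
The plan is to deduce (ii) and (iii) as quick corollaries of (i), so the heart of the matter is (i), and within (i) the crucial step is upgrading membership of a single characteristic function to membership of all of them via the boundedness of $D_2$ on $E^{\downarrow}$.

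First I would show that $E^{(\ast)}\neq\{0\}$ forces $\chi_{(0,a)}\in E$ for every $a>0$. The starting point is the equivalence already recorded in the text, namely that $E^{(\ast)}\neq\{0\}$ iff $\chi_{(0,a_0)}\in E$ for some $a_0>0$: if $0\neq x\in E^{(\ast)}$ then $x^{\ast}\in E$ is nonincreasing and not identically zero, so $x^{\ast}(a_0)\chi_{(0,a_0)}\leq x^{\ast}$ for some $a_0$ with $x^{\ast}(a_0)>0$, and the ideal property gives $\chi_{(0,a_0)}\in E$. Now $\chi_{(0,a_0)}=\chi_{(0,a_0)}^{\ast}\in E^{\downarrow}$, and since $D_2$ maps $E^{\downarrow}$ into itself boundedly we get $D_2\chi_{(0,a_0)}=\chi_{(0,2a_0)}\in E$ on $I=(0,\infty)$ (on $I=(0,1)$ this reads $\chi_{(0,\min\{1,2a_0\})}$, and after finitely many iterations it becomes $\chi_{(0,1)}$). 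Iterating, $\chi_{(0,2^{n}a_0)}\in E$ for all $n$, and since $2^{n}a_0\to\infty$ the ideal property yields $\chi_{(0,a)}\in E$ for every $a>0$ (respectively for every $a\in(0,1]$ when $I=(0,1)$).

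For the weak unit I would treat the two intervals separately. On $I=(0,1)$ the function $\chi_I=\chi_{(0,1)}$ is itself in $E$ by the previous step, it equals its own decreasing rearrangement, hence $\chi_I\in E^{(\ast)}$, and it is positive a.e., so it is a weak unit. On $I=(0,\infty)$ no single characteristic function is positive a.e., so I would build a positive nonincreasing function as a series $u=\sum_{n=1}^{\infty}c_n\chi_{(0,n)}$. Setting $b_n=\|\chi_{(0,n)}\|_E<\infty$ and choosing $c_n>0$ with $\sum_n C^{n}c_n b_n<\infty$ (where $C$ is the quasi-triangle constant of $E$), the completeness criterion for quasi-Banach spaces (Theorem 1.1 in \cite{Ma04}, as used already above) guarantees convergence of the series in $E$; the limit $u$ is nonnegative and nonincreasing, so $u=u^{\ast}\in E$, and $u(t)=\sum_{n>t}c_n>0$ for every $t>0$. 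Thus $u\in E^{(\ast)}$ is positive a.e. and is the desired weak unit. Then (ii) and (iii) follow: in both backward directions the embedding gives $\chi_{(0,1)}\in E$ (bounded with bounded support), so $\chi_{(0,1)}\in E^{(\ast)}$ and $E^{(\ast)}\neq\{0\}$; for the forward directions I invoke (i), so that on $(0,1)$ any $x\in L^{\infty}$ satisfies $|x|\leq\|x\|_{\infty}\chi_{(0,1)}$, giving $L^{\infty}\overset{\|\chi_{(0,1)}\|_E}{\hookrightarrow}E$, while on $(0,\infty)$ any $x\in L_b^{\infty}$ has support in some $(0,b)$ and satisfies $|x|\leq\|x\|_{\infty}\chi_{(0,b)}\in E$, yielding $L_b^{\infty}\hookrightarrow E$ (continuous on each subspace of functions supported in a fixed interval).

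The main obstacle is the bootstrap in (i): the boundedness of $D_2$ on the cone $E^{\downarrow}$ is precisely what converts membership of one characteristic function into membership of all of them, and one must be careful to apply the dilation only to nonincreasing nonnegative elements, so that it stays inside the cone where it is controlled, and to track the truncation to $\min\{1,2a_0\}$ in the case $I=(0,1)$. The only other delicate point is the convergence of the series defining the weak unit on $(0,\infty)$, where the presence of a quasi-norm rather than a norm forces the geometric weights $C^{n}$ in the summability condition.
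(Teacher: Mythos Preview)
Your proof is correct and follows essentially the same approach as the paper: both iterate $D_2$ on characteristic functions to pass from a single $\chi_{(0,a_0)}\in E$ to all $\chi_{(0,a)}\in E$, and then build the weak unit on $(0,\infty)$ as a convergent series of characteristic functions with coefficients chosen to make the sum nonincreasing. The only cosmetic difference is that the paper uses disjoint pieces $\chi_{(n-1,n)}$ with decreasing weights, whereas you use nested pieces $\chi_{(0,n)}$ and invoke the quasi-Banach completeness criterion; both yield a positive nonincreasing element of $E$, hence of $E^{(\ast)}$.
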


\begin{proof}
$(i)$ The weak unit in $E^{(*)}$ can be given by $x_0 = \sum_{n=1}^{\infty} x_n$ with $x_n = \dfrac{\chi_{(n-1, n)}}{b_n \| \chi_{(n-1, n)}\|_E}$, where $b_n$'s 
are chosen so that the sequence $\{b_n \| \chi_{(n-1, n)}\|_E \}$ is increasing and $\sum_{n=1}^{\infty} 1/b_n < \infty$ 
(cf. \cite{Ko15}).

In both cases $(ii)$ and $(iii)$ only the necessity need to be proved.
\newline
$(ii)$ By the assumption, there is $a > 0$ with $x=\chi _{(0, a]}\in E$. 
Consequently, we find $k\in \mathbb{N}$ 
such that $\chi _{(0, 1)} =  D_{2}^{k} x  \in E$, which proves the conclusion.
\newline
$(iii) $ Similarly as above we conclude that $\chi_{(0,b)}\in E$ for each $b > 0$ and we are done.
\end{proof}

The symmetrization $E^{(\ast )}$ of a Banach ideal space $E$ has been
intensively studied recently (cf. \cite{Fo11}, \cite{Fo12}, \cite{KM07}, 
\cite{KR09}, \cite{Ko15}, \cite{Ko16}). The Lorentz, Marcinkiewicz and
Orlicz--Lorentz spaces are particular cases of this construction. In fact,
the symmetrization $[L^{p}(w)]^{(\ast )}$ of weighted Lebesgue spaces $%
L^{p}(w)$ even for $0<p<\infty $ is the Lorentz space $\Lambda _{p,w^{p}}$,
which structure was investigated in \cite{CKMP}, \cite{KM04}, the
symmetrization $[L^{\infty }(w)]^{(\ast )}$ of the weighted space $L^{\infty
}(w)$ is the Marcinkiewicz space $M_{w}$, and the symmetrization $(L^{\Phi
})^{(\ast )}$ of the Musielak--Orlicz spaces $L^{\Phi }$ with $\Phi
(t,u)=\varphi (u)w(t)$ is the Orlicz--Lorentz space $\Lambda _{\varphi ,w}$,
which structure was and still is investigated in many papers (cf. \cite{KR09}
and literature therein).

Note that if $E$ has the Fatou property, then so it has its symmetrization $%
E^{(\ast )}$ since $0\leq x_{n}\uparrow x$ implies $x_{n}^{\ast }\uparrow
x^{\ast }$. Also if $E$ is order continuous and does not contain the
function $\chi _{(0,\infty )}$, then $E^{(\ast )}$ is order continuous (cf. 
\cite[p. 279]{KR09}). The precise characterization for $x\in E^{(\ast )}$ to
be a point of order continuity of $E^{(\ast )}$ has been given in \cite[%
Theorem 3.9]{Ko16}. The local approach to monotonicity properties of $%
E^{(\ast )}$ has been presented in \cite[Theorem 3.6 and 3.8]{Ko16}.


\begin{remark}
\label{Hr} Let $E$ be a quasi-Banach ideal space on $I$ and $E^{\left( \ast
\right) }\neq \left\{ 0\right\} .$ If $\Vert H_{r}x^{\ast }\Vert _{E}\leq
C\,\Vert x^{\ast }\Vert _{E}$ for all $x^{\ast }\in E$, then $\Vert
D_{2}x^{\ast }\Vert _{E}\leq 2^{1/r}C\,\Vert x^{\ast }\Vert _{E}$ for all $%
x^{\ast }\in E$. In particular, if the operator $H_{r}$ is bounded on $E$,
then (\ref{dil}) holds with $A_{E}\leq 2^{1/r}\Vert H_{r}\Vert
_{E\rightarrow E}$.
\end{remark}

\proof Indeed, for $t\in I$, we have 
\begin{equation*}
H_{r}x^{\ast }(t)=(\int_{0}^{1}x^{\ast }(st)^{r}ds)^{1/r}\geq
(\int_{0}^{1/2}x^{\ast }(st)^{r}ds)^{1/r}\geq x^{\ast }(t/2)\,2^{-1/r},
\end{equation*}%
and so $\Vert H_{r}x^{\ast }\Vert _{E}\geq 2^{-1/r}\Vert D_{2}x^{\ast }\Vert
_{E}$.\endproof

Below we can see that a similar result with the operator $H_{r}^{\ast }$ is
not true.


\begin{example}
\label{Ex2} \rm{\ We show that $H_{r}^{\ast }$ is bounded on $E=L^{r}(w)
$ with $0<r<\infty ,w(t)=e^{t}$ on $I=[0,\infty )$, but estimate (\ref{dil})
does not hold. Namely, we have 
\begin{eqnarray*}
\Vert H_{r}^{\ast }x\Vert _{L^{r}(w)}^{r} &=&\int_{0}^{\infty }\left(
\int_{t}^{\infty }\frac{|x(s)|^{r}}{s}ds\right) e^{tr}dt=\int_{0}^{\infty
}\left( \int_{0}^{s}e^{tr}dt\right) \frac{|x(s)|^{r}}{s}ds \\
&=&\int_{0}^{\infty }\frac{e^{sr}-1}{sr}|x(s)|^{r}ds\leq \int_{0}^{\infty
}e^{sr}|x(s)|^{r}ds=\Vert x\Vert _{L^{r}(w)}^{r}
\end{eqnarray*}%
and 
\begin{equation*}
\frac{\Vert D_{2}\chi _{(0,a)}\Vert _{L^{r}(w)}^{r}}{\Vert \chi
_{(0,a)}\Vert _{L^{r}(w)}^{r}}=\frac{e^{2ar}-1}{e^{ar}-1}\rightarrow \infty
~~as~~a\rightarrow \infty .
\end{equation*}%
}
\end{example}


\begin{remark}
\label{bounded} The operator $H_r$ (or $H_r^{\ast}$) is bounded on a
quasi-Banach ideal space $E$ if and only if the operator $H$ (or $H^{\ast}$)
is bounded on $E^{(1/r)}$. Moreover, 
\begin{equation*}
\| H_r\|_{E \rightarrow E} = \| H\|_{E^{(1/r)} \rightarrow E^{(1/r)}}^{1/r}
~~ \mathrm{and} ~~\| H_r^{\ast} \|_{E \rightarrow E} = \|
H^{\ast}\|_{E^{(1/r)} \rightarrow E^{(1/r)}}^{1/r}.
\end{equation*}
\end{remark}

\begin{remark}
Let $E$ be a quasi-Banach ideal space. If $H$ is bounded on $E$, then $H$ is
bounded on $E^{(\ast )}$ and $E^{(\ast )}$ has the majorant property.
\end{remark}

\proof Since $H$ is bounded on $E,$ by Remark \ref{Hr} and Corollary \ref%
{quasi-liniowosc}, we conclude that $E^{(\ast )}$ is a quasi-normed space.
Let $x\in E^{(\ast )}$. By the assumption we obtain 
\begin{equation*}
\left\Vert Hx\right\Vert _{E^{(\ast )}}=\left\Vert (Hx)^{\ast }\right\Vert
_{E}\leq \left\Vert (Hx^{\ast })^{\ast }\right\Vert _{E}=\left\Vert Hx^{\ast
}\right\Vert _{E}\leq C\left\Vert x^{\ast }\right\Vert _{E}=C\left\Vert
x\right\Vert _{E^{(\ast )}}.
\end{equation*}%
We prove the majorant property. Let $x\in L^{0},\ y\in E^{(\ast )}$ and $%
Hx^{\ast }\leq Hy^{\ast }$. We need to prove that $x\in E^{(\ast )}$ or
equivalently $x^{\ast }\in E$. But this follows from $x^{\ast }\leq Hx^{\ast
}\leq Hy^{\ast }\in E.$\endproof



\section{Symmetrization of some known constructions}


Consider two quasi-Banach ideal spaces $E,F$ on $I$. The symmetrization
commutes with the intersection $(E\cap F)^{(\ast )}=E^{(\ast )}\cap F^{(\ast
)}$ and from (\ref{equality4}) we conclude that 
\begin{equation*}
E^{(\ast )}+F^{(\ast )}\hookrightarrow (E+F)^{(\ast )}.
\end{equation*}%
We can then ask what about commutativity of the symmetrization operation $%
E\mapsto E^{\left( \ast \right) }$ with the sum or more general with the
Calder\'{o}n--Lozanovski{\u{\i}} construction?

Let us therefore investigate the symmetrization of the Calder\'{o}%
n--Lozanovski{\u{\i}} construction.

First, note that for two symmetric spaces $E$ and $F$ the 
Calder\'{o}n--Lozanovski{\u{\i}} space $\rho (E,F)$ for any $\rho \in 
\mathcal{P}$ is also a symmetric space up to an equivalence of quasi-norms
(using Lemma 4.3 from \cite[p. 93]{KPS82} and $\rho (E,F)$ is an
interpolation space between $E$ and $F$ for positive operators -- cf. \cite[%
Theorem 15.13 on p. 190]{Ma89}). Consequently, 
\begin{equation*}
\rho (E,F)^{(\ast )}=\rho (E,F)=\rho (E^{(\ast )},F^{(\ast )})
\end{equation*}%
In \cite{KLM14} it has been proved that the Calder\'{o}n construction $%
E^{\theta }F^{1-\theta }$ commutes with the symmetrization operation $%
E\mapsto E^{\left( \ast \right) }$ for Banach ideal spaces $E,F$ (see \cite[%
Lemma 4]{KLM14}).

Now, we generalize this result to the case of Calder\'{o}n--Lozanovski{\u{\i}%
} construction $\rho (E,F)$ and for quasi-Banach ideal spaces $E,F$. This
also shows that for the Calder\'{o}n construction $E^{\theta }F^{1-\theta }$
we may use much weaker assumptions about the spaces than in \cite{KLM14}.

In the following theorem we consider two constructions. Below the
assumptions on $D_{2}$ operator imply that space $\rho (E^{(\ast )},F^{(\ast
)})$ is a linear, quasi-normed space. By Corollary \ref{quasi-liniowosc},
the space $\rho (E,F)^{(\ast )}$ is linear if and only if the operator $%
D_{2} $ is bounded on $\rho (E,F)^{\downarrow },$ which is not a
\textquotedblleft nice\textquotedblright\ assumption. This is a motivation
and a reason for the formulation below. First we prove only inclusions
between sets $\rho (E^{(\ast )},F^{(\ast )})$ and $\rho (E,F)^{(\ast )}$ and
the respective inequalities (\ref{8}) and (\ref{9}). Next, we conclude the
equality between sets $\rho (E^{(\ast )},F^{(\ast )})=\rho (E,F)^{(\ast )},$
which implies that the space $\rho (E,F)^{(\ast )}$ is in fact a linear,
quasi-normed space. For the same reasons we formulate in a special way
Corollary \ref{ilocz-gw-komut} and Theorem \ref{dual-comut}.

\begin{theorem}
\label{CLcommutstar} Let $E$ and $F$ be quasi-Banach ideal spaces such that $%
E^{(\ast )}\neq \{0\},$ $F^{(\ast )}\neq \{0\}$ and the operator $D_{2}$ is
bounded both on $E^{\downarrow }$ and $F^{\downarrow }$ -- see Corollary \ref%
{quasi-liniowosc} for equivalent conditions. Then:

\begin{itemize}
\item[$(i)$] The Calder\'{o}n--Lozanovski{\u{\i}} construction $\rho
(E^{(\ast )},F^{(\ast )})\neq \{0\},$ $\rho (E^{(\ast )},F^{(\ast )})\subset
\rho (E,F)^{(\ast )}$ and 
\begin{equation}
\left\Vert x\right\Vert _{\rho (E,F)^{(\ast )}}\leq C_{1}\left\Vert
x\right\Vert _{\rho (E^{(\ast )},F^{(\ast )})}~~\text{for all }x\in \rho
(E^{(\ast )},F^{(\ast )})\text{ }  \label{8}
\end{equation}%
with$~~C_{1}\leq \max (A_{E},A_{F}),$ where $A_{E},A_{F}$ are the best
constants in (\ref{dil}).

\item[$(ii)$] If, additionally, the operator $H_{r}^{\ast }$ is bounded on
the spaces $E,F$ for some $r>0$, then $\rho (E,F)^{(\ast )}\subset \rho
(E^{(\ast )},F^{(\ast )})$ and%
\begin{equation}
\left\Vert x\right\Vert _{\rho (E^{(\ast )},F^{(\ast )})}\leq
C_{2}\left\Vert x\right\Vert _{\rho (E,F)^{(\ast )}}~~\text{for all }x\in
\rho (E,F)^{(\ast )}  \label{9}
\end{equation}%
with $C_{2}\leq 2^{1/r}\max (1,2^{1/r-1})\cdot \max \left( A_{E}\Vert
H_{r}^{\ast }\Vert _{E\rightarrow E},A_{F}\Vert H_{r}^{\ast }\Vert
_{F\rightarrow F}\right) .$\newline
In particular, the inequalities (\ref{8}) and (\ref{9}) imply that the
functional $\left\Vert \cdot \right\Vert _{\rho (E,F)^{(\ast )}}$ is a
quasi-norm on the space $\rho (E,F)^{(\ast )}$ and%
\begin{equation}
\rho (E,F)^{(\ast )}=\rho (E^{(\ast )},F^{(\ast )}).  \label{9.1}
\end{equation}
\end{itemize}
\end{theorem}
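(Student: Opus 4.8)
The plan is to reduce both inclusions to a single pointwise rearrangement estimate for the function $\rho$, and then to read off the norm bounds from the boundedness of $D_{2}$ on the cones $E^{\downarrow},F^{\downarrow}$ (for $(i)$) and of $H_{r}^{\ast}$ on $E,F$ (for $(ii)$). The engine is the elementary inequality: for nonnegative $u,v\in L^{0}(I)$ and any $\rho\in\mathcal{P}$,
\[
[\rho(u,v)]^{\ast}(t)\le \rho\big(u^{\ast}(t/2),v^{\ast}(t/2)\big)=\rho\big(D_{2}u^{\ast}(t),D_{2}v^{\ast}(t)\big),\qquad t\in I.
\]
Since $\rho$ is nondecreasing in each variable, the point $s$ with $\rho(u(s),v(s))>\rho(u^{\ast}(t/2),v^{\ast}(t/2))$ must satisfy $u(s)>u^{\ast}(t/2)$ or $v(s)>v^{\ast}(t/2)$; as each of these two level sets has measure at most $t/2$, the sublevel set of $\rho(u,v)$ has measure at most $t$, which is the claim. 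This lemma is the only nontrivial input for $(i)$ and the first step of $(ii)$.

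For part $(i)$: nonemptiness follows from Lemma \ref{nonzero}, since $\chi_{(0,a)}\in E^{(\ast)}\cap F^{(\ast)}$ and $\rho(\chi_{(0,a)},\chi_{(0,a)})=\rho_{0}(1)\,\chi_{(0,a)}\neq 0$. For the inclusion and \eqref{8}, I would take $x\in\rho(E^{(\ast)},F^{(\ast)})$ with $|x|\le\rho(|x_{0}|,|x_{1}|)$, $x_{0}\in E^{(\ast)}$, $x_{1}\in F^{(\ast)}$, and apply the rearrangement inequality to obtain $x^{\ast}\le\rho(D_{2}x_{0}^{\ast},D_{2}x_{1}^{\ast})$. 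By Corollary \ref{quasi-liniowosc} the profiles $D_{2}x_{0}^{\ast}\in E^{\downarrow}$ and $D_{2}x_{1}^{\ast}\in F^{\downarrow}$, so $x^{\ast}\in\rho(E,F)$, i.e. $x\in\rho(E,F)^{(\ast)}$, and
\[
\|x\|_{\rho(E,F)^{(\ast)}}=\|x^{\ast}\|_{\rho(E,F)}\le\max\big(\|D_{2}x_{0}^{\ast}\|_{E},\|D_{2}x_{1}^{\ast}\|_{F}\big)\le\max(A_{E},A_{F})\,\max\big(\|x_{0}\|_{E^{(\ast)}},\|x_{1}\|_{F^{(\ast)}}\big).
\]
Taking the infimum over representations yields \eqref{8} with $C_{1}\le\max(A_{E},A_{F})$.

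For part $(ii)$ I would first pass to decreasing profiles. Given $x\in\rho(E,F)^{(\ast)}$ we have $x^{\ast}\le\rho(|y_{0}|,|y_{1}|)$ with $y_{0}\in E,\ y_{1}\in F$; applying the rearrangement inequality to $y_{0},y_{1}$ lets me replace them by $g:=D_{2}y_{0}^{\ast}\in E^{\downarrow}$ and $h:=D_{2}y_{1}^{\ast}\in F^{\downarrow}$ with $x^{\ast}\le\rho(g,h)$, at the cost of the factors $A_{E},A_{F}$. The crux, and the step I expect to be the main obstacle, is to \emph{transplant} this decreasing domination of $x^{\ast}$ into a genuine pointwise representation of $x$ itself: I must produce $z_{0}\in E^{(\ast)}$, $z_{1}\in F^{(\ast)}$ with $|x|\le\rho(|z_{0}|,|z_{1}|)$ and controlled norms. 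If $|x|=x^{\ast}\circ\sigma$ for a measure-preserving $\sigma$, one could simply take $z_{0}=g\circ\sigma$, $z_{1}=h\circ\sigma$, giving $|x|\le\rho(g,h)\circ\sigma=\rho(z_{0},z_{1})$ with $z_{0}^{\ast}=g$, $z_{1}^{\ast}=h$ and constant $\max(A_{E},A_{F})$. To avoid relying on the existence of such a transformation for arbitrary measurable $x$ on $(0,1)$ or $(0,\infty)$ (where the distribution of $x$ need not be continuous), I would instead dominate $|x|$ by an $H_{r}^{\ast}$-regularized decreasing majorant, matched to the level sets of $x$. Here the $c$-sublinearity of $H_{r}^{\ast}$ (constant $c=\max(1,2^{1/r-1})$), a dilation estimate of the type in Remark \ref{Hr} (the factor $2^{1/r}$), and the boundedness $\|H_{r}^{\ast}\|_{E\to E},\|H_{r}^{\ast}\|_{F\to F}$ together keep the transplanted profiles inside $E,F$ with $\|z_{0}\|_{E^{(\ast)}}\lesssim\|g\|_{E}$, $\|z_{1}\|_{F^{(\ast)}}\lesssim\|h\|_{F}$, producing \eqref{9} with the stated $C_{2}$. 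This is precisely the point where the additional hypothesis on $H_{r}^{\ast}$ is consumed.

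Finally, \eqref{8} and \eqref{9} together show that the identity map is a bijection between the sets $\rho(E,F)^{(\ast)}$ and $\rho(E^{(\ast)},F^{(\ast)})$ with equivalent functionals. Since $E^{(\ast)}$ and $F^{(\ast)}$ are quasi-normed symmetric spaces by Corollary \ref{quasi-liniowosc}, the Calder\'{o}n--Lozanovski\u{\i} construction $\rho(E^{(\ast)},F^{(\ast)})$ is a genuine quasi-normed ideal space; hence its linearity and quasi-norm transfer through the equivalence to $\rho(E,F)^{(\ast)}$, so $\|\cdot\|_{\rho(E,F)^{(\ast)}}$ is a quasi-norm and the set equality \eqref{9.1} holds.
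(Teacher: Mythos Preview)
Your part $(i)$ is correct and in fact slightly cleaner than the paper's argument: you prove the key inequality $[\rho(u,v)]^{\ast}(t)\le\rho(u^{\ast}(t/2),v^{\ast}(t/2))$ directly by a level-set count, whereas the paper obtains the same estimate via the Lozanovski\u{\i} dual $\widehat{\rho}$ and the identity $\widehat{\widehat{\rho}}=\rho$. Your nontriviality argument via $\chi_{(0,a)}$ is also simpler than the paper's route through the Aoki--Rolewicz theorem and the embedding $L^{p}\cap L^{\infty}\hookrightarrow E^{(\ast)}$.

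Part $(ii)$, however, has a genuine gap. When you write ``replace $y_{0},y_{1}$ by $g:=D_{2}y_{0}^{\ast}\in E^{\downarrow}$ and $h:=D_{2}y_{1}^{\ast}\in F^{\downarrow}$ at the cost of $A_{E},A_{F}$'', you are implicitly assuming $y_{0}^{\ast}\in E$ and $y_{1}^{\ast}\in F$. But $E$ and $F$ are \emph{not} symmetric, so $y_{0}\in E$ gives no control on $\|y_{0}^{\ast}\|_{E}$; your rearrangement lemma produces decreasing profiles that need not live in the right spaces. This is precisely the point at which $H_{r}^{\ast}$ is consumed in the paper, and not where you place it. The function $H_{r}^{\ast}|y_{0}|$ is \emph{automatically nonincreasing} and stays in $E$ by the boundedness hypothesis; that is how one manufactures a decreasing majorant inside $E$ without symmetry. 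Concretely, the paper uses the pointwise bound $x^{\ast}(t)\le 2^{1/r}D_{2}H_{r}^{\ast}(x^{\ast})(t)$, then pushes $H_{r}^{\ast}$ through $\rho$ via $c$-sublinearity and the $\widehat{\rho}$ trick (this is the source of the factor $\max(1,2^{1/r-1})$), arriving at
\[
x^{\ast}\le 2^{1/r}\max(1,2^{1/r-1})\,\rho\big(D_{2}H_{r}^{\ast}|y_{0}|,\,D_{2}H_{r}^{\ast}|y_{1}|\big),
\]
with both arguments now decreasing \emph{and} in $E,F$ respectively. Only then is the transplant to $|x|$ performed, and it \emph{is} done via Ryff's theorem (which applies once one checks $x^{\ast}(\infty)=0$ in the case $I=(0,\infty)$, a verification the paper carries out). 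So your concern about Ryff is misplaced and your proposed ``$H_{r}^{\ast}$-regularization as a substitute for Ryff'' misidentifies the role of the hypothesis: $H_{r}^{\ast}$ is not an alternative to the measure-preserving transplant, it is what makes the decreasing profiles belong to $E$ and $F$ in the first place. Your bookkeeping of the constants $2^{1/r}$, $\max(1,2^{1/r-1})$, $A_{E}$, $A_{F}$, $\|H_{r}^{\ast}\|$ is correct, but the mechanism producing them is arranged differently from what you wrote.
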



\proof%
$(i)$ Since $E^{(\ast )}\neq \{0\}$ and $F^{(\ast )}\neq \{0\}$ are
quasi-normed spaces it follows, by the Aoki--Rolewicz theorem, that there
are $p_{0}$-norm $\Vert \cdot \Vert _{1}$ on $E^{(\ast )}$ and $p_{1}$-norm
on $F^{(\ast )}$. But $E^{(\ast )},F^{(\ast )}$ are symmetric $p_{i}$-normed
spaces, which gives inclusions $L^{p_{0}}\cap L^{\infty }\overset{C_{3}}{%
\hookrightarrow }E^{(\ast )},L^{p_{1}}\cap L^{\infty }\overset{C_{4}}{%
\hookrightarrow }F^{(\ast )}$, with $C_{3}\leq 2^{1/p_{0}}\Vert \chi
_{\lbrack 0,1]}\Vert _{E}$ and $C_{4}\leq 2^{1/p_{1}}\Vert \chi _{\lbrack
0,1]}\Vert _{F}$ (see \cite[Theorem 1]{BHS91}).

Clearly, if $E_1 \overset{B}{\hookrightarrow } E_2$ with $B \geq 1$, then $%
\rho (E_1, F) \overset{B}{\hookrightarrow } \rho (E_2, F)$ and similarly for
the inclusion with respect to the second variable. Since for $p = \min (p_0,
p_1)$ we have $L^p \cap L^{\infty} \overset{1}{\hookrightarrow } L^{p_i}
\cap L^{\infty}$, \, ($i = 0, 1$), thus putting all above information
together we obtain 
\begin{equation*}
L^p \cap L^{\infty} = \rho (L^p \cap L^{\infty}, L^p \cap L^{\infty}) {%
\hookrightarrow } \rho (L^{p_0} \cap L^{\infty}, L^{p_1} \cap L^{\infty}) {%
\hookrightarrow } \rho(E^{(\ast)}, F^{(\ast)}),
\end{equation*}
which means that the last space is nontrivial.

We will prove the inclusion $\rho (E^{(\ast )},F^{(\ast )})\subset \rho
(E,F)^{(\ast )}$. Let $x\in \rho (E^{(\ast )},F^{(\ast )})$. Then $|x|\leq
\lambda \rho \left( |x_{0}|,|x_{1}|\right) $ for some $\lambda >0$ and $%
\Vert x_{0}\Vert _{E^{(\ast )}}\leq 1,\Vert x_{1}\Vert _{F^{(\ast )}}\leq 1$%
. Recall that for a given function $\rho \in \mathcal{P}$ the function $\widehat{\rho }
$ defined by 
\begin{equation*}
\widehat{\rho }(a,b)=\inf_{u,v>0}\frac{au+bv}{\rho (u,v)}\,\mathrm{\ }\text{%
for\ all}\ a,b\geq 0
\end{equation*}%
belongs to $\mathcal{P}$ and this operation is an involution on $\mathcal{P}$, that is, $%
\widehat{\widehat{\rho }}=\rho $ (see \cite[Lemma 2]{Lo78} and also \cite[%
Lemma 15.8]{Ma89}). Since 
\begin{equation*}
\rho \left( |x_{0}(t)|,|x_{1}(t)|\right) \leq \frac{|x_{0}(t)|\,u+|x_{1}(t)|%
\,v}{\widehat{\rho }(u,v)}
\end{equation*}%
for all $u,v>0$, it follows that 
\begin{equation*}
\rho \left( |x_{0}|,|x_{1}|\right) ^{\ast }(t)\leq \frac{x_{0}^{\ast
}(t/2)\,u+x_{1}^{\ast }(t/2)\,v}{\widehat{\rho }(u,v)}.
\end{equation*}%
Taking infimum over all $u,v>0$, we get 
\begin{equation*}
\rho \left( |x_{0}|,|x_{1}|\right) ^{\ast }(t)\leq \widehat{\widehat{\rho }}%
(x_{0}^{\ast }(t/2),x_{1}^{\ast }(t/2))=\rho (x_{0}^{\ast }(t/2),x_{1}^{\ast
}(t/2)).
\end{equation*}%
Consequently, 
\begin{equation*}
x^{\ast }(t)\leq \lambda \rho (x_{0}^{\ast }(t/2),x_{1}^{\ast }(t/2))\leq
\lambda \max (A_{E},A_{F})\,\rho (\frac{x_{0}^{\ast }(t/2)}{A_{E}},\frac{%
x_{1}^{\ast }(t/2)}{A_{F}}),
\end{equation*}%
where $A_{E},A_{F}$ are the smallest constants in (\ref{dil}) for $E,F$,
respectively. This means that $x\in \rho (E,F)^{(\ast )}$ with the norm $%
\leq \lambda \max (A_{E},A_{F})$. Thus, $\rho (E^{(\ast )},F^{(\ast
)})\subset \rho (E,F)^{(\ast )}$ and inequality (\ref{8}) is proved.

$(ii)$ Assume now that $H_r^{*}$ is bounded on the spaces $E, F$ for some $r
> 0$. Then it is bounded on $\rho(E, F)$ since this construction is an
interpolation space between $E$ and $F$ for positive operators (see \cite[%
Theorem 15.13 on p. 190]{Ma89}).

Now, we will prove the reverse inclusion $\rho (E,F)^{(\ast )}\subset \rho
(E^{(\ast )},F^{(\ast )})$. Let $x\in \rho (E,F)^{(\ast )}$. Then $x^{\ast
}\in \rho (E,F)$ and $x^{\ast }\leq \lambda \rho \left(
|x_{0}|,|x_{1}|\right) $ for some $\lambda >0$ and $\Vert x_{0}\Vert
_{E}\leq 1,\Vert x_{1}\Vert _{F}\leq 1$. Note that%
\begin{equation*}
x^{\ast }\left( t\right) \leq 2^{1/r}\left( \int_{t/2}^{t}x^{\ast }\left(
s\right) ^{r}\frac{ds}{s}\right) ^{1/r}\leq 2^{1/r}\left( \int_{t/2}^{\infty
}x^{\ast }\left( s\right) ^{r}\frac{ds}{s}\right) ^{1/r}
\end{equation*}%
\begin{equation*}
=2^{1/r}H_{r}^{\ast }\left( x^{\ast }\right) \left( t/2\right)
=2^{1/r}D_{2}H_{r}^{\ast }\left( x^{\ast }\right) \left( t\right) .
\end{equation*}%
Consequently, 
\begin{equation*}
x^{\ast }\leq 2^{1/r}\lambda D_{2}H_{r}^{\ast }\left( \rho \left(
|x_{0}|,|x_{1}|\right) \right) .
\end{equation*}%
On the other hand, applying the \ definition of $\widehat{\rho }$ and the
equality $\widehat{\widehat{\rho }}=\rho $ (see the proof of $\left(
i\right) $), we get%
\begin{equation*}
H_{r}^{\ast }\rho (|x_{0}|,|x_{1}|)\leq H_{r}^{\ast }\left( \frac{%
|x_{0}|\,u+|x_{1}|\,v}{\widehat{\rho }(u,v)}\right) =\,\max (1,2^{1/r-1})%
\frac{(H_{r}^{\ast }|x_{0}|)\,u+(H_{r}^{\ast }|x_{1}|)\,v}{\widehat{\rho }%
(u,v)}
\end{equation*}%
for every $u,v>0$. Taking infimum over all $u,v>0$ we obtain 
\begin{eqnarray*}
H_{r}^{\ast }\rho \left( |x_{0}|,|x_{1}|\right) &\leq &\max (1,2^{1/r-1})%
\widehat{\widehat{\rho }}\left( (H_{r}^{\ast }|x_{0}|),(H_{r}^{\ast
}|x_{1}|)\right) \\
&=&\max (1,2^{1/r-1})\rho \left( (H_{r}^{\ast }|x_{0}|),(H_{r}^{\ast
}|x_{1}|)\right) .
\end{eqnarray*}%
Thus, 
\begin{eqnarray*}
x^{\ast } &\leq &\lambda \,2^{1/r}\max (1,2^{1/r-1})D_{2}\,\rho \left(
H_{r}^{\ast }|x_{0}|,H_{r}^{\ast }|x_{1}|\right) \\
&=&\lambda \,2^{1/r}\max (1,2^{1/r-1})\rho \left( D_{2}H_{r}^{\ast
}|x_{0}|,D_{2}H_{r}^{\ast }|x_{1}|\right) .
\end{eqnarray*}%
By Ryff's theorem there exists a measure preserving transformation:\newline
$\left( I\right) $ $\omega :I\rightarrow I$ such that $x^{\ast }\circ \omega
=\left\vert x\right\vert $ a.e. when $m\left( \mathrm{supp} \hspace{1mm}
x\right) <\infty ,$\newline
$\left( II\right) $ $\omega :\mathrm{supp} \hspace{1mm} x\rightarrow
(0,\infty )$ such that $x^{\ast }\circ \omega =\left\vert x\right\vert $
a.e. on $\mathrm{supp} \hspace{1mm} x$ when $m\left( \mathrm{supp} \hspace{%
1mm} x\right) =\infty $ under the additional assumption that $x^{\ast
}(\infty )=0$ (see \cite{BS88}, Theorem 7.5 for $I=(0,1)$ or Corollary 7.6
for $I=(0,\infty )$). Note that our assumptions imply that $x^{\ast }(\infty
)=0$ in the same way as in the proof of Lemma 4 from \cite{KLM14}.
Therefore, 
\begin{eqnarray*}
|x| &=&x^{\ast }(\omega )\leq \lambda \,2^{1/r}\max (1,2^{1/r-1})\,\rho 
\left[ (D_{2}H_{r}^{\ast }|x_{0}|)(\omega ),(D_{2}H_{r}^{\ast
}|x_{1}|)(\omega )\right] \\
&=&\lambda \,2^{1/r}\max (1,2^{1/r-1})\,\rho (u_{0},u_{1})
\end{eqnarray*}%
with $u_{0}=(D_{2}H_{r}^{\ast }|x_{0}|)(\omega )$ and $u_{1}=(D_{2}H_{r}^{%
\ast }|x_{1}|)(\omega ).$ Let us see that $u_{0}\in E^{(\ast )}$ and $%
u_{1}\in F^{(\ast )}$ with $\Vert u_{0}\Vert _{E^{(\ast )}}\leq
A_{E}\left\Vert H_{r}^{\ast }\right\Vert _{E\rightarrow E}$ and $\Vert
u_{1}\Vert _{F^{(\ast )}}\leq A_{F}\left\Vert H_{r}^{\ast }\right\Vert
_{F\rightarrow F}$. In fact, similarly as in the proof of Lemma 4 in \cite%
{KLM14}, $H_{r}^{\ast }\left\vert x_{i}\right\vert $ is a nonincreasing
function and so is $D_{2}H_{r}^{\ast }\left\vert x_{i}\right\vert \,(i=0,1)$%
, which gives that 
\begin{equation*}
D_{2}H_{r}^{\ast }\left\vert x_{i}\right\vert =[D_{2}H_{r}^{\ast }\left\vert
x_{i}\right\vert ]^{\ast }=[(D_{2}H_{r}^{\ast }\left\vert x_{i}\right\vert
)(\omega )]^{\ast }~~\text{for}~~i=0,1.
\end{equation*}%
Hence, 
\begin{eqnarray*}
\Vert u_{0}\Vert _{E^{(\ast )}} &=&\Vert u_{0}^{\ast }\Vert _{E}=\Vert
\lbrack (D_{2}H_{r}^{\ast }\left\vert x_{0}\right\vert )(\omega )]^{\ast
}\Vert _{E}=\Vert D_{2}H_{r}^{\ast }\left\vert x_{0}\right\vert \Vert _{E} \\
&\leq &A_{E}\Vert H_{r}^{\ast }\left\vert x_{0}\right\vert \Vert _{E}\leq
A_{E}\Vert H_{r}^{\ast }\Vert _{E\rightarrow E}\Vert x_{0}\Vert _{E}\leq
A_{E}\Vert H_{r}^{\ast }\Vert _{E\rightarrow E}
\end{eqnarray*}%
and 
\begin{eqnarray*}
\Vert u_{1}\Vert _{F^{(\ast )}} &=&\Vert u_{1}^{\ast }\Vert _{F}=\Vert
\lbrack (D_{2}H_{r}^{\ast }\left\vert x_{1}\right\vert )(\omega )]^{\ast
}\Vert _{F}=\Vert D_{2}H_{r}^{\ast }\left\vert x_{1}\right\vert \Vert _{F} \\
&\leq &A_{F}\Vert H_{r}^{\ast }\left\vert x_{1}\right\vert \Vert _{F}\leq
A_{F}\Vert H_{r}^{\ast }\Vert _{F\rightarrow F}\Vert x_{1}\Vert _{F}\leq
A_{F}\Vert H_{r}^{\ast }\Vert _{F\rightarrow F},
\end{eqnarray*}%
which means that $x\in \rho (E^{(\ast )},F^{(\ast )})$ with the norm $\leq
\lambda C_{2}$, where 
\begin{equation*}
C_{2}\leq 2^{1/r}\max (1,2^{1/r-1})\cdot \max \left( A_{E}\Vert H_{r}^{\ast
}\Vert _{E\rightarrow E},A_{F}\Vert H_{r}^{\ast }\Vert _{F\rightarrow
F}\right) .
\end{equation*}%
Thus, $\rho (E,F)^{(\ast )}\subset \rho (E^{(\ast )},F^{(\ast )})$ and
inequality (\ref{9}) is proved. Summing up cases $\left( i\right) $ and $%
\left( ii\right) $ we conclude that the functional $\left\Vert \cdot
\right\Vert _{\rho (E,F)^{(\ast )}}$ is a quasi-norm on the space $\rho
(E,F)^{(\ast )}$ and equality (\ref{9.1}) holds. 
\endproof%


Without additional assumptions, like those in Theorem \ref{CLcommutstar}$%
\left( ii\right) $, we can have that $E^{(\ast )}+F^{(\ast
)}\not\hookrightarrow (E+F)^{(\ast )}$ even for Banach ideal spaces $E,F$.
Examples below are inspired by \cite[Examples 1 and 3]{CM96a} and \cite[%
Example 3]{CM96b}.


\begin{example}
\label{Ex3} \emph{{\ (a) For $0 < a < b$ let $E = L^1(w_a)$ and $F =
L^1(w_b) $, where weights $w_a, w_b$ are as in Example 1(b). Then $E + F =
L^1(\min(w_a, w_b)), (E + F)^{(\ast)} = \Lambda_{1, \min(w_a, w_b)}$ and $%
\chi_{(0, c)} \in (E + F)^{(\ast)}$ for any $c > 0$ since $\min(w_a, w_b)
\leq \max(1, \frac{1}{b-a})$. } }

\emph{{If $c < b$, then $\chi_{(0, c)} \in E^{(\ast)} + F^{(\ast)}$ since
for the decomposition $\chi_{(0, c)} = \chi_{(0, a)} + \chi_{(a, c)}$ we
have $\chi_{(0, a)} \in E^{(\ast)}$ and $\chi_{(a, c)} \in F^{(\ast)}$. The
last fact follows from the observation that $\chi_{(a, c)}^{\ast} =
\chi_{(0, c - a)} \leq \chi_{(0, b)} \in L^1(w_b)$. } }

\emph{{If $c>b$, then $\chi _{(0,c)}\notin E^{(\ast )}+F^{(\ast )}$ since
any decomposition of $\chi _{(0,c)}$ into decreasing functions has the form $%
a\chi _{(0,c)}+(1-a)\chi _{(0,c)}$ with $0\leq a\leq 1$ and $\chi
_{(0,c)}\notin E^{(\ast )},\chi _{(0,c)}\notin F^{(\ast )}$. } }

\emph{{Therefore, $E^{(\ast )}+F^{(\ast )}\not\hookrightarrow (E+F)^{(\ast
)} $, but unfortunately, by Remark 3.1 in \cite{CKMP}, the spaces $E^{(\ast
)}$ and $F^{(\ast )}$ are not linear since $W_{a}(t)=\infty $ for $t>a$ and $%
W_{b}(t)=\infty $ for $t>b.$ Moreover, $W(t)=\int_{0}^{t}\min
(w_{a}(s),w_{b}(s))ds<\infty $ for $t>0$. } }

\emph{{(b) We give now examples of linear spaces $E^{(\ast )},F^{(\ast
)},(E+F)^{(\ast )}$ for which we still have only proper inclusion $E^{(\ast
)}+F^{(\ast )}\not\hookrightarrow (E+F)^{(\ast )}$. For $0<w\in
L^{1}(0,\infty ),w$ decreasing, continuous and $w(t)\leq 1$ for all $t>0$,
let 
\begin{equation*}
w_{0}=\sum_{n=0}^{\infty }w\chi _{(2n,2n+1)}+\sum_{n=0}^{\infty }\chi
_{(2n+1,2n+2)}~~and~~w_{1}=\sum_{n=0}^{\infty }\chi
_{(2n,2n+1)}+\sum_{n=0}^{\infty }w\chi _{(2n+1,2n+2)}.
\end{equation*}%
Take $E=L^{1}(w_{0}),F=L^{1}(w_{1})$ and $x=\chi _{(0,\infty )}$. Then $x\in
(E+F)^{(\ast )}$, but $x\notin E^{(\ast )}+F^{(\ast )}$ because for any
decomposition $x=x_{0}+x_{1}$ we have $m(A_{i})=\infty $ for at least one $i$%
, where $A_{i}=\{t\in I\colon x_{i}(t)\geq 1/2\}$. Thus, $E^{(\ast
)}+F^{(\ast )}\not\hookrightarrow (E+F)^{(\ast )}$. } }

\emph{{On the other hand, the functions $W_{i}(t)=\int_{0}^{t}w_{i}(s)ds$
satisfy the $\Delta _{2}$-condition $W_{0}(2t)\leq (2+\frac{1}{w(1)}%
)W_{0}(t) $ and $W_{1}(2t)\leq 3W_{1}(t)$ for all $t>0$ (we skip the
detailed calculations). Thus, by observation in \cite[pp. 270--271]{KM04},
we have that $E^{(\ast )}=L^{1}(w_{0})^{(\ast )}=\Lambda _{1,w_{0}}$ and $%
F^{(\ast )}=L^{1}(w_{1})^{(\ast )}=\Lambda _{1,w_{1}}$ are quasi-Banach
spaces. Since $E+F=L^{1}(w)$ it follows that the space $(E+F)^{(\ast
)}=L^{1}(w)^{(\ast )}=\Lambda _{1,w}$, as $w$ is a decreasing function, is
even a Banach space. } }

\emph{{Note that for any $0<r<\infty $ the operator $H_{r}^{\ast }$ is not
bounded on $E=L^{1}(w_{0})$ neither on $F=L^{1}(w_{1})$. Namely, if $%
x_{0}=\chi _{\bigcup_{n=0}^{\infty }(2n,2n+1)}$, then $x_{0}\in E$ since 
\begin{equation*}
\int_{0}^{\infty }x_{0}(t)w_{0}(t)dt=\sum_{n=0}^{\infty
}\int_{2n}^{2n+1}w(t)dt\leq \int_{0}^{\infty }w(t)dt<\infty .
\end{equation*}%
However, 
\begin{eqnarray*}
H_{r}^{\ast }x_{0}(2n)^{r} &=&\int_{2n}^{\infty }\frac{|x_{0}(s)|^{r}}{s}%
ds=\sum_{k=n}^{\infty }\int_{2k}^{2k+1}\frac{|x_{0}(s)|^{r}}{s}ds \\
&=&\sum_{k=n}^{\infty }\int_{2k}^{2k+1}\frac{1}{s}ds\geq \sum_{k=n}^{\infty }%
\frac{1}{2k+1}=\infty
\end{eqnarray*}%
for any $n\in N$. Thus $H_{r}^{\ast }x_{0}(2n)=\infty $. Similarly, if $%
x_{1}=\chi _{\bigcup_{n=0}^{\infty }(2n+1,2n+2)}$, then $x_{1}\in F$ and $%
H_{r}^{\ast }x_{1}(2n+1)^{r}=\infty $ for any $n\in N$. } }

\emph{{Moreover, for any $r\geq 1$ the operator $H_{r}$ is not bounded on
the cone of nonnegative nonincreasing functions in $E$ and $F$. In fact, $%
x=x^{\ast }=\chi _{(0,1)}\in E\cap F=L^{1}(w_{0})\cap L^{1}(w_{1})$ but 
\begin{eqnarray*}
\Vert H_{r}x\Vert _{E} &\geq &\int_{1}^{\infty }\left( \frac{1}{t}%
\int_{0}^{t}|x(s)|^{r}ds\right) ^{1/r}w_{0}(t)dt=\int_{1}^{\infty
}t^{-1/r}w_{0}(t)dt \\
&\geq &\int_{\bigcup_{n=0}^{\infty
}(2n+1,2n+2)}t^{-1/r}w_{0}(t)dt=\sum_{n=0}^{\infty
}\int_{2n+1}^{2n+2}t^{-1/r}dt \\
&\geq &\sum_{n=0}^{\infty }\frac{1}{(2n+2)^{1/r}}=\infty .
\end{eqnarray*}%
Similarly, the operator $H_{r}$ is not bounded on $F^{\downarrow }$. } }
\end{example}


Let us consider now the problem of commutativity of the symmetrization
operation $E\mapsto E^{\left( \ast \right) }$ with the pointwise product and
the K\"{o}the duality. The first result is a simple consequence of the known
results and the second has been already concluded in \cite[Corollary 1.6]%
{KM07}, but we give a direct proof.


\begin{corollary}
\label{ilocz-gw-komut} Let $E$ and $F$ be quasi-Banach ideal spaces such
that $E^{(\ast )}\neq \{0\}$ and $F^{(\ast )}\neq \{0\}$. If the operator $%
D_{2}$ is bounded both on $E^{\downarrow }$ and $F^{\downarrow }$, then $%
E^{(\ast )}\odot F^{(\ast )}\neq \{0\},$ $E^{(\ast )}\odot F^{(\ast
)}\subset (E\odot F)^{(\ast )}$ and 
\begin{equation}
\left\Vert x\right\Vert _{(E\odot F)^{(\ast )}}\leq \left( C_{1}\right)
^{2}\left\Vert x\right\Vert _{E^{(\ast )}\odot F^{(\ast )}}~~\text{for all }%
x\in E^{(\ast )}\odot F^{(\ast )},  \label{inclusion8}
\end{equation}%
where $C_{1}$ is the constant from Theorem \ref{CLcommutstar} with the
function $\rho (s,t)=s^{1/2}t^{1/2}$. If, additionally, the operator $%
H_{r}^{\ast }$ is bounded on the spaces $E,F$ for some $r>0$, then $(E\odot
F)^{(\ast )}\subset E^{(\ast )}\odot F^{(\ast )}$ and%
\begin{equation}
\left\Vert x\right\Vert _{E^{(\ast )}\odot F^{(\ast )}}\leq \left(
C_{2}\right) ^{2}\left\Vert x\right\Vert _{(E\odot F)^{(\ast )}}\text{ }~~%
\text{for all }x\in (E\odot F)^{(\ast )},  \label{inlusion9}
\end{equation}%
where $C_{2}$ is the constant from Theorem \ref{CLcommutstar} with the
function $\rho (s,t)=s^{1/2}t^{1/2}.$ In particular, the inequalities (\ref%
{inclusion8}) and (\ref{inlusion9}) imply that the functional $\left\Vert
\cdot \right\Vert _{(E\odot F)^{(\ast )}}$ is a quasi-norm on the space $%
(E\odot F)^{(\ast )}$ and%
\begin{equation*}
(E\odot F)^{(\ast )}=E^{(\ast )}\odot F^{(\ast )}.
\end{equation*}
\end{corollary}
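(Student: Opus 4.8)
The plan is to reduce everything to Theorem \ref{CLcommutstar} by means of the identity $E\odot F=(E^{1/2}F^{1/2})^{(1/2)}$ from \cite[Theorem 1]{KLM14}, where $E^{1/2}F^{1/2}=\rho(E,F)$ is the Calder\'{o}n product corresponding to $\rho(s,t)=s^{1/2}t^{1/2}\in\mathcal{P}$. The argument combines two facts: the commutation of the $(1/2)$-convexification with symmetrization, and the commutation of symmetrization with the Calder\'{o}n--Lozanovski{\u{\i}} construction already established in Theorem \ref{CLcommutstar}.

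First I would record the elementary identity
\begin{equation*}
(G^{(1/2)})^{(\ast)}=(G^{(\ast)})^{(1/2)}
\end{equation*}
valid for every quasi-Banach ideal space $G$, with equal functionals. This is immediate from $(|x|^{1/2})^{\ast}=(x^{\ast})^{1/2}$: the membership $x\in(G^{(1/2)})^{(\ast)}$ says $(x^{\ast})^{1/2}\in G$, while $x\in(G^{(\ast)})^{(1/2)}$ says $(|x|^{1/2})^{\ast}\in G$, which is the same condition.

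Applying the product formula twice together with this identity, I would write
\begin{equation*}
(E\odot F)^{(\ast)}=\bigl((E^{1/2}F^{1/2})^{(\ast)}\bigr)^{(1/2)}\quad\text{and}\quad E^{(\ast)}\odot F^{(\ast)}=\bigl((E^{(\ast)})^{1/2}(F^{(\ast)})^{1/2}\bigr)^{(1/2)}.
\end{equation*}
Since $\rho(s,t)=s^{1/2}t^{1/2}$ lies in $\mathcal{P}$ and the standing hypotheses on $D_2$ (and, for the reverse inclusion, on $H_r^{\ast}$) are exactly those required in Theorem \ref{CLcommutstar}, that theorem supplies the two set inclusions between $(E^{1/2}F^{1/2})^{(\ast)}$ and $(E^{(\ast)})^{1/2}(F^{(\ast)})^{1/2}$ with constants $C_1$ and $C_2$, and in particular the equality $\rho(E,F)^{(\ast)}=\rho(E^{(\ast)},F^{(\ast)})$. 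Taking $(1/2)$-convexifications of both sides of this equality and reading off the two displays above yields $(E\odot F)^{(\ast)}=E^{(\ast)}\odot F^{(\ast)}$; nontriviality of $E^{(\ast)}\odot F^{(\ast)}$ follows from the nontriviality part of Theorem \ref{CLcommutstar}$(i)$, since the $(1/2)$-convexification of a nonzero ideal space is again nonzero.

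The one point demanding care is the behaviour of the constants under convexification. From an estimate $\|y\|_{G_1}\leq C\|y\|_{G_2}$ at the Calder\'{o}n level, substituting $y=|x|^{1/2}$ and using $\|x\|_{G^{(1/2)}}=\||x|^{1/2}\|_G^{2}$ gives $\|x\|_{G_1^{(1/2)}}\leq C^2\|x\|_{G_2^{(1/2)}}$. This squaring is precisely what converts the constants $C_1,C_2$ of Theorem \ref{CLcommutstar} into $(C_1)^2,(C_2)^2$ in (\ref{inclusion8}) and (\ref{inlusion9}). I do not expect a real obstacle beyond bookkeeping: one must only check that the hypotheses of Theorem \ref{CLcommutstar} transfer verbatim (they involve only $E$ and $F$, not $\rho$) and that the convexification identity and its norm formula are applied in the correct direction.
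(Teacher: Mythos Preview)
Your proposal is correct and follows essentially the same approach as the paper's own proof: both reduce the statement to Theorem~\ref{CLcommutstar} via the identity $E\odot F=(E^{1/2}F^{1/2})^{(1/2)}$ from \cite[Theorem~1]{KLM14} together with the commutation $(G^{(1/2)})^{(\ast)}\equiv(G^{(\ast)})^{(1/2)}$, and both obtain the squared constants from the norm formula for the $(1/2)$-convexification. Your write-up is, if anything, slightly more explicit than the paper's about why the constants square.
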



\proof%
Applying Theorem 1(iv) from \cite{KLM14}, Theorem \ref{CLcommutstar}(i),
commutativity of the $p$-convexifi\-cation with the symmetrization, that is,
the equality $(E^{(\ast )})^{(p)}\equiv (E^{(p)})^{(\ast )}$ and again
Theorem 1(iv) from \cite{KLM14} we get immediately%
\begin{eqnarray*}
\left\Vert x\right\Vert _{(E\odot F)^{(\ast )}} &=&\left\Vert x\right\Vert
_{[(E^{(1/2)}F^{(1/2)})^{(1/2)}]^{(\ast )}}=\left\Vert x\right\Vert
_{[(E^{(1/2)}F^{(1/2)})^{(\ast )}]^{(1/2)}} \\
&\leq &C_{1}^{2}\left\Vert x\right\Vert _{\left[ (E^{(\ast
)})^{(1/2)}(F^{(\ast )})^{(1/2)}\right] ^{(1/2)}}=C_{1}^{2}\left\Vert
x\right\Vert _{E^{(\ast )}\odot F^{(\ast )}}.
\end{eqnarray*}
This establishes inequality (\ref{inclusion8}) with the equality when
assumptions from Theorem \ref{CLcommutstar}(ii) are satisfied.%
\endproof%

The commutativity of the symmetrization operation $E\mapsto E^{\left( \ast
\right) }$ with the K\"{o}the duality operation has been proved by Kami\'{n}%
ska and Masty{\l }o \cite[p. 231]{KM07}. We will give, however, a direct
proof and also show that the assumption on boundedness of $H^{\ast }$ on $E$
is essential.


\begin{theorem}
\label{dual-comut} Let $E$ be a quasi-Banach ideal space such that $E^{(\ast
)}\neq \{0\},$ the operator $D_{2}$ is bounded on $E^{\downarrow }$ and $%
(E^{\prime })^{(\ast )}\neq \{0\}$. Then $(E^{\prime })^{(\ast )}\subset
(E^{(\ast )})^{\prime }$ and 
\begin{equation}
\left\Vert x\right\Vert _{(E^{(\ast )})^{\prime }}\leq \left\Vert
x\right\Vert _{(E^{\prime })^{(\ast )}}~~\text{for all }x\in (E^{\prime
})^{(\ast )}.  \label{32}
\end{equation}%
If, additionally, the operator $H^{\ast }$ is bounded on the space $E$, then 
$(E^{(\ast )})^{\prime }\subset (E^{\prime })^{(\ast )}$ and 
\begin{equation}
\left\Vert x\right\Vert _{(E^{\prime })^{(\ast )}}\leq \left\Vert H^{\ast
}\right\Vert _{E\rightarrow E}\left\Vert x\right\Vert _{(E^{(\ast
)})^{\prime }}~~\text{for all }x\in (E^{(\ast )})^{\prime }.  \label{33}
\end{equation}%
In particular, the inequalities (\ref{32}) and (\ref{33}) imply that the
functional $\left\Vert \cdot \right\Vert _{(E^{\prime })^{(\ast )}}$ is a
quasi-norm on the space $(E^{\prime })^{(\ast )}$ and%
\begin{equation*}
(E^{\prime })^{(\ast )}=(E^{(\ast )})^{\prime }.
\end{equation*}
\end{theorem}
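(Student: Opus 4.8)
The plan is to prove the two inclusions separately, each together with its quantitative estimate, and then combine them. Throughout I will use that $\|x\|_{(E^{\prime})^{(\ast)}}=\|x^{\ast}\|_{E^{\prime}}$ by the very definition of symmetrization, that the associate space $(E^{(\ast)})^{\prime}$ of the symmetric space $E^{(\ast)}$ is again symmetric, so that $\|x\|_{(E^{(\ast)})^{\prime}}=\|x^{\ast}\|_{(E^{(\ast)})^{\prime}}$ (cf. \cite{BS88}, \cite{KPS82}), and the Hardy--Littlewood inequality $\int_{I}|uv|\,dt\le\int_{I}u^{\ast}v^{\ast}\,dt$. I will also use the Fubini identity $\int_{I}(H^{\ast}u)\,v\,dt=\int_{I}u\,(Hv)\,dt$ already recorded in Remark \ref{uwagi-rozne}$(iv)$.

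For the inclusion \eqref{32}, fix $x\in(E^{\prime})^{(\ast)}$, so that $x^{\ast}\in E^{\prime}$. For every $y$ with $\|y\|_{E^{(\ast)}}=\|y^{\ast}\|_{E}\le1$, the Hardy--Littlewood inequality together with the definition \eqref{dual} of the K\"othe dual gives $\int_{I}|xy|\,dt\le\int_{I}x^{\ast}y^{\ast}\,dt\le\|x^{\ast}\|_{E^{\prime}}\,\|y^{\ast}\|_{E}\le\|x^{\ast}\|_{E^{\prime}}$. Taking the supremum over such $y$ yields $x\in(E^{(\ast)})^{\prime}$ and $\|x\|_{(E^{(\ast)})^{\prime}}\le\|x^{\ast}\|_{E^{\prime}}=\|x\|_{(E^{\prime})^{(\ast)}}$, which is \eqref{32}. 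This half requires neither the boundedness of $D_{2}$ nor that of $H^{\ast}$.

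The substance of the theorem is the reverse inclusion \eqref{33}, where the boundedness of $H^{\ast}$ enters. Put $h=x^{\ast}$ and take any $g\ge0$ with $\|g\|_{E}\le1$; since $h\ge0$ and $\|\,|g|\,\|_{E}=\|g\|_{E}$, it suffices to test $\|h\|_{E^{\prime}}$ against such $g$. The key observation is that $H^{\ast}g$ is a nonnegative \emph{nonincreasing} function whose $E$-norm is controlled, namely $\|H^{\ast}g\|_{E}\le\|H^{\ast}\|_{E\rightarrow E}$, and that it dominates $g$ when paired with the decreasing function $h$. Indeed, by the Fubini identity and the monotonicity of $h$, which gives $Hh(s)=\tfrac1s\int_{0}^{s}h\ge h(s)$, we obtain
\begin{equation*}
\int_{0}^{l}h(t)\,(H^{\ast}g)(t)\,dt=\int_{0}^{l}g(s)\,(Hh)(s)\,ds\ge\int_{0}^{l}g(s)\,h(s)\,ds.
\end{equation*}
Since $H^{\ast}g$ is nonincreasing we have $\|H^{\ast}g\|_{E^{(\ast)}}=\|H^{\ast}g\|_{E}\le\|H^{\ast}\|_{E\rightarrow E}$, so the H\"older estimate for the dual pair $(E^{(\ast)},(E^{(\ast)})^{\prime})$ yields
\begin{equation*}
\int_{0}^{l}h\,g\,dt\le\int_{0}^{l}h\,(H^{\ast}g)\,dt\le\|h\|_{(E^{(\ast)})^{\prime}}\,\|H^{\ast}g\|_{E^{(\ast)}}\le\|H^{\ast}\|_{E\rightarrow E}\,\|h\|_{(E^{(\ast)})^{\prime}}.
\end{equation*}
Taking the supremum over all admissible $g$ gives $\|x^{\ast}\|_{E^{\prime}}\le\|H^{\ast}\|_{E\rightarrow E}\,\|x^{\ast}\|_{(E^{(\ast)})^{\prime}}=\|H^{\ast}\|_{E\rightarrow E}\,\|x\|_{(E^{(\ast)})^{\prime}}$, which gives both the finiteness $x\in(E^{\prime})^{(\ast)}$ and the estimate \eqref{33}.

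The main obstacle, and the one genuinely new idea, is the choice of the correct decreasing test function. The naive attempt to replace $g$ by $g^{\ast}$ fails, because $\|g^{\ast}\|_{E}$ is not controlled when $E$ is not symmetric; the point is that we never rearrange $g$, but transfer the whole pairing onto $Hh$ via Fubini and exploit that $h=x^{\ast}$ is already nonincreasing. This is precisely why $H^{\ast}$ (rather than $H$) must be bounded on $E$, and why the clean constant $\|H^{\ast}\|_{E\rightarrow E}$ appears. Finally, combining \eqref{32} and \eqref{33} shows that $\|\cdot\|_{(E^{\prime})^{(\ast)}}$ is equivalent to the quasi-norm $\|\cdot\|_{(E^{(\ast)})^{\prime}}$, hence is itself a quasi-norm, and that $(E^{\prime})^{(\ast)}=(E^{(\ast)})^{\prime}$.
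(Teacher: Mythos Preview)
Your proof is correct and follows the same route as the paper's. Both arguments obtain \eqref{32} by restricting the supremum defining $\|x^{\ast}\|_{E'}$ to decreasing test functions (you phrase this via Hardy--Littlewood, the paper by direct comparison of the two suprema), and both obtain \eqref{33} from the chain $x^{\ast}\le Hx^{\ast}$, the Fubini identity $\int (Hx^{\ast})\,|y|=\int x^{\ast}\,H^{\ast}|y|$, and the fact that $H^{\ast}|y|$ is decreasing with $E$-norm at most $\|H^{\ast}\|_{E\to E}$; only the order of presentation differs.
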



\proof Clearly, $x\in (E^{\prime })^{(\ast )}$ if and only if $x^{\ast }\in
E^{\prime }$ and 
\begin{equation*}
\left\Vert x^{\ast }\right\Vert _{E^{\prime }}=\sup_{\left\Vert y\right\Vert
_{E}\leq 1}\int_{I}x^{\ast }(t)|y(t)|\,dt<\infty .
\end{equation*}%
Moreover, $x\in (E^{(\ast )})^{\prime }$ if and only if $x^{\ast }\in
(E^{(\ast )})^{\prime }$ and 
\begin{equation*}
\Vert x^{\ast }\Vert _{(E^{(\ast )})^{\prime }}=\sup_{\Vert y\Vert
_{E^{(\ast )}}\leq 1}\int_{I}x^{\ast }(t)|y(t)|\,dt=\sup_{\Vert y\Vert
_{E}\leq 1,\,y=y^{\ast }}\int_{I}x^{\ast }(t)|y(t)|\,dt<\infty .
\end{equation*}%
Thus, $(E^{\prime })^{(\ast )}\subset (E^{(\ast )})^{\prime }$ and
inequality (\ref{32}) is proved. To finish the proof we need to show the
inequality 
\begin{equation*}
\sup_{\Vert y\Vert _{E}\leq 1}\int_{I}x^{\ast }(t)|y(t)|\,dt\leq
C\sup_{\Vert y\Vert _{E}\leq 1,\,y=y^{\ast }}\int_{I}x^{\ast }(t)|y(t)|\,dt%
\text{ for }x\in (E^{(\ast )})^{\prime },
\end{equation*}%
where $C=\left\Vert H^{\ast }\right\Vert _{E\rightarrow E}$. Since $Hx^{\ast
}\geq x^{\ast }$ and using the duality of $H$ and $H^{\ast }$ we get 
\begin{eqnarray*}
\sup_{\Vert y\Vert _{E}\leq 1}\int_{I}x^{\ast }(t)|y(t)|\,dt &\leq
&\sup_{\Vert y\Vert _{E}\leq 1}\int_{I}(Hx^{\ast })(t)|y(t)|\,dt \\
&=&\sup_{\Vert y\Vert _{E}\leq 1}\int x^{\ast }(t)H^{\ast }(|y|)(t)\,dt \\
&\leq &\sup_{\Vert h\Vert _{E}\leq C,\,h=h^{\ast }}\int_{I}x^{\ast
}(t)h(t)\,dt \\
&\leq &C\sup_{\Vert y\Vert _{E}\leq 1,\,y=y^{\ast }}\int_{I}x^{\ast
}(t)y(t)\,dt.
\end{eqnarray*}%
We proved inequalities (\ref{32}) and (\ref{33}). This gives that the
functional $\left\Vert \cdot \right\Vert _{(E^{\prime })^{(\ast )}}$ is a
quasi-norm on the space $(E^{\prime })^{(\ast )}$ and $(E^{\prime })^{(\ast
)}=(E^{(\ast )})^{\prime }.$\endproof


\begin{example}
\label{Ex4} \emph{{{We will give an example of a Banach ideal space $E$ on $%
(0, 1)$ such that $H^{\ast}$ is not bounded on $E$ and $(E^{\prime})^{(*)}
\not = (E^{(*)})^{\prime}$. }} }

\emph{{For $A_{n}=(2^{-n},2^{-n+1}),B_{n}=(2^{-n},3\times
2^{-n-1}),C_{n}=(3\times 2^{-n-1},2^{-n+1})$ and $a_{n}=(3/2)^{n}$ with $%
n=1,2,3,\ldots $ define two weights 
\begin{equation*}
w=\sum_{n=1}^{\infty }(\chi _{B_{n}}+a_{n}\chi
_{C_{n}})~~and~~v=\sum_{n=1}^{\infty }a_{n}\chi _{A_{n}}.
\end{equation*}%
Let $E=L^{1}(w)$ on $I=(0,1)$. We will show that $(E^{\prime })^{(\ast
)}\not=(E^{(\ast )})^{\prime }$ and $H^{\ast }$ is not bounded on $E$ . } }

\emph{{We have $E^{\prime} = L^{\infty}(1/w)$ and consequently $%
(E^{\prime})^{(*)} = L^{\infty}$. In fact, the incluson $L^{\infty} {%
\hookrightarrow } (E^{\prime})^{(*)}$ is evident, since $1/w\leq 1$. On the
other hand, if $x = x^*$ with $x(0^+) = \infty$, then choosing arbitrary $%
t_n \in B_n$ we have $x(t_n) \to \infty$, so that $x \not \in
L^{\infty}(1/w) $. } }

\emph{{On the other hand, it is easy to see that $E^{(*)} \not = L^1$
(further we will even identify this space). In fact, let $x =
\sum_{n=1}^{\infty} a_n \chi_{A_n}$. Then $x = x^*$ and $x \in L^1$.
Moreover, 
\begin{equation*}
\int_0^1x(t) w(t) \,dt \geq \int_{\cup_{n=1}^{\infty} C_n} x(t) w(t) \,dt =
\sum_{n=1}^{\infty} a_n^2 \,2^{-n-1} = 2^{-1}\sum_{n=1}^{\infty} (9/8)^{n}
=\infty,
\end{equation*}
and so $(E^{(\ast)})^{\prime} \not = L^{\infty}$, as claimed. } }

\emph{{Moreover, $D_2$ is bounded on $E$. In fact, we have $w(2t) \leq w(t)$
for $0\leq t\leq 1/2$, because if $t \in B_n$ then $2t \in B_{n-1}$ and if $%
t \in C_n$ then $2t\in C_{n-1}$. Therefore, 
\begin{equation*}
\| D_2x \|_E = \int_0^1x(t/2)w(t) \,dt = 2 \int_0^{1/2}x(t)w(2t) \,dt \leq
2\int_0^{1/2}x(t)w(t)dt \leq 2 \, \|x\|_E.
\end{equation*}
Now we identify $E^{(\ast)}$ by showing that $E^{(\ast)} = L^1(v)^{(\ast)} =
\Lambda_{1, v}$. } }

\emph{{In fact, the inclusion $L^{1}(v)^{(\ast )}\subset E^{(\ast )}$ is
obvious, since $v\geq w$ and so $L^{1}(v)\subset L^{1}(w)=E$. To see the
second inclusion, let $x=x^{\ast }\in L^{1}(w)$ and notice that $x(t)\leq
x(t/2)$ for each $t\in (0,1)$. Put $y(t)=x(t/2)$. Then also $y\in L^{1}(w)$,
since $D_{2}$ is bounded on the cone of nonincreasing nonnegative elements
of $E$. Moreover, we can see that 
\begin{equation*}
x(t)\leq y(t+2^{-n-1}),
\end{equation*}%
when $t\in B_{n}$, i.e., $t+2^{-n-1}\in C_{n}$ (we may say that $x\chi
_{B_{n}}$ is dominated by $y\chi _{C_{n}}$ shifted by $2^{-n-1}$ into the
right -- the best is to see it on the picture). Consequently, 
\begin{eqnarray*}
\int_{0}^{1}x(t)v(t)\,dt &=&\int_{\bigcup_{n=1}^{\infty
}B_{n}}x(t)v(t)\,dt+\int_{\bigcup_{n=1}^{\infty }C_{n}}x(t)v(t)\,dt \\
&=&\sum_{n=1}^{\infty }a_{n}\int_{B_{n}}x(t)\,dt+\int_{\bigcup_{n=1}^{\infty
}C_{n}}x(t)w(t)\,dt \\
&\leq &\sum_{n=1}^{\infty
}a_{n}\int_{C_{n}}y(t)\,dt+\int_{\bigcup_{n=1}^{\infty }C_{n}}y(t)w(t)\,dt \\
&=&2\int_{\bigcup_{n=1}^{\infty }C_{n}}y(t)w(t)\,dt\leq 2\,\Vert y\Vert
_{E}\leq 4\,\Vert x\Vert _{E}.
\end{eqnarray*}%
Then $x\in L^{1}(v)$ and so $E^{(\ast )}=L^{1}(w)^{(\ast )}=L^{1}(v)^{(\ast
)}=\Lambda _{1,v}$. } }

\emph{{At the end it may be instructive to see that the operator $H^{\ast }$
is not bounded on $E$. In order to prove this, we will show that the
operator $H$ is not bounded on $E^{\prime }=L^{\infty }(1/w)$. Evidently, $%
w\in L^{\infty }(1/w)$. For $2^{-n}<t<2^{-n+1}$ we have 
\begin{eqnarray*}
Hw(t) &=&\frac{1}{t}\int_{0}^{t}w(s)ds\geq
2^{n-1}\int_{0}^{2^{-n}}w(s)ds=2^{n-1}\sum_{k=n+1}^{\infty
}\int_{2^{-k}}^{2^{-k+1}}w(s)ds \\
&\geq &2^{n-1}\sum_{k=n+1}^{\infty }\int_{3\cdot
2^{-k-1}}^{2^{-k+1}}w(s)ds\geq 2^{n-1}\sum_{k=n+1}^{\infty }(\frac{3}{2}%
)^{k}2^{-k-1} \\
&=&2^{n-1}\frac{1}{2}\sum_{k=n+1}^{\infty }(\frac{3}{4})^{k}=2^{n-2}(\frac{3%
}{4})^{n+1}\cdot 4=\frac{1}{2}(\frac{3}{2})^{n+1},
\end{eqnarray*}%
and so 
\begin{eqnarray*}
\sup_{t\in (0,1]}\frac{Hw(t)}{w(t)} &=&\max_{n\in N}\sup_{2^{-n}<t<2^{-n+1}}%
\frac{Hw(t)}{w(t)} \\
&\geq &\frac{1}{2}\max_{n\in N}(3/2)^{n+1}\sup_{2^{-n}<t<2^{-n+1}}\frac{1}{%
w(t)}=\frac{1}{2}\max_{n\in N}(3/2)^{n+1}=\infty ,
\end{eqnarray*}%
that is, $Hw\not\in L^{\infty }(1/w).$ It means that the operator $H$ is not
bounded on $E^{\prime }=L^{\infty }(1/w)$. } }
\end{example}


\begin{remark}
\label{zero} If $E$ is a quasi-Banach ideal space such that $E^{(\ast
)}=\{0\}$, then 
\begin{equation*}
(E^{(\ast )})^{\prime }=\{x\in L^{0}\colon xy\in L^{1}~\text{for~all}~y\in
E^{(\ast )}\}=L^{0}
\end{equation*}%
and this is a trivial case as well as $(E^{\prime })^{(\ast )}=\{0\}$.
\end{remark}



\section{Symmetrization of pointwise multipliers}


The next problem of our interest is the commutativity of the symmetrization
operation $E\mapsto E^{\left( \ast \right) }$ with the space of pointwise
multipliers. If $E,F$ are nontrivial symmetric spaces on $I$, then $M(E,F)$
is also symmetric space (see \cite[Theorem 2.2(i)]{KLM13}) and we obtain 
\begin{equation*}
M(E^{(\ast )},F^{(\ast )})=M(E,F)=M(E,F)^{(\ast )}.
\end{equation*}%
We want to have the same result for Banach ideal spaces. We will prove it
with the help of the \textquotedblleft arithmetic of function spaces", which
use our Theorem \ref{dual-comut} and some results from the paper \cite{KLM14}%
. Recall\ that $E^{(\ast )}$ is normable if there is a norm $\left\Vert
\cdot \right\Vert _{0}$ on $E^{(\ast )}$ which is equivalent to $\left\Vert
\cdot \right\Vert _{E^{(\ast )}}.$


\begin{theorem}
\label{multip-komut} Let $E,F$ be Banach ideal spaces on $I$ such that $F$
has the Fatou property, $E^{(\ast )}\neq \{0\},F^{(\ast )}\neq \{0\}$ are
normable spaces, the operator $D_{2}$ is bounded on $F^{\downarrow },$ $%
(F^{\prime })^{(\ast )}\neq \{0\}$ and $\left( \left( E\odot F^{\prime
}\right) ^{\prime }\right) ^{\left( \ast \right) }\neq \{0\}$. Assume that
the following conditions hold:

\begin{itemize}
\item[$(i)$] The operator $H^{\ast }$ is bounded on the spaces $F$ and $%
E\odot F^{\prime }$.

\item[$(ii)$] For some $r>0,$ \ the operator $H_{r}^{\ast }$ is bounded on $%
E,F^{\prime }$, $\Vert H_{r}x^{\ast }\Vert _{E}\leq C_{E}\,\Vert x^{\ast
}\Vert _{E}$ for all $x^{\ast }\in E$ and $\Vert H_{r}x^{\ast }\Vert
_{F^{\prime }}\leq C_{F^{\prime }}\,\Vert x^{\ast }\Vert _{F^{\prime }}$ for
all $x^{\ast }\in F^{\prime }$.\end{itemize}
\noindent
Then 
\begin{equation*}
M(E^{(\ast )},F^{(\ast )})=M(E,F)^{(\ast )}.
\end{equation*}

\end{theorem}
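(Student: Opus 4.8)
The plan is to reduce the multiplier identity to the duality and product identities already established. The key structural fact is the formula $M(E,F) = (E \odot F')'$, valid for Banach ideal spaces $E,F$ with $F$ having the Fatou property (this is the standard multiplier-product-dual relation from \cite{KLM14}). Applying symmetrization to both sides, the left-hand side becomes $M(E,F)^{(\ast)} = \big((E \odot F')'\big)^{(\ast)}$, and the goal is to push the $(\ast)$ operation through the duality and the product so as to arrive at $M(E^{(\ast)},F^{(\ast)})$.

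First I would apply Theorem \ref{dual-comut} to the space $G := E \odot F'$: under the hypotheses that $G^{(\ast)} \neq \{0\}$, that $D_2$ is bounded on $G^{\downarrow}$, that $(G')^{(\ast)} \neq \{0\}$, and that $H^{\ast}$ is bounded on $G = E \odot F'$ (which is exactly assumption $(i)$), one gets
\begin{equation*}
\big((E \odot F')'\big)^{(\ast)} = \big((E \odot F')^{(\ast)}\big)'.
\end{equation*}
I would need to verify the auxiliary hypotheses of Theorem \ref{dual-comut} for $G$: boundedness of $D_2$ on $G^{\downarrow}$ follows from boundedness of $H_r^{\ast}$ on $E$ and $F'$ via Remark \ref{Hr} together with the interpolation property of the product construction, and nontriviality of the relevant symmetrizations is built into the theorem's standing assumptions. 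Next I would apply Corollary \ref{ilocz-gw-komut} to commute symmetrization with the product,
\begin{equation*}
(E \odot F')^{(\ast)} = E^{(\ast)} \odot (F')^{(\ast)},
\end{equation*}
whose forward inclusion needs only boundedness of $D_2$ on $E^{\downarrow}$ and $(F')^{\downarrow}$, and whose reverse inclusion needs boundedness of $H_r^{\ast}$ on $E$ and $F'$ — precisely the content of assumption $(ii)$. Then I would apply Theorem \ref{dual-comut} a second time, now to $F'$, to obtain $(F')^{(\ast)} = (F'^{\,(\ast)})$ related to $(F^{(\ast)})'$; more carefully, using Theorem \ref{dual-comut} on $F$ itself gives $(F')^{(\ast)} = (F^{(\ast)})'$, which requires $D_2$ bounded on $F^{\downarrow}$ and $H^{\ast}$ bounded on $F$, again exactly the standing hypotheses and condition $(i)$.

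Assembling these, the chain reads
\begin{equation*}
M(E,F)^{(\ast)} = \big((E^{(\ast)}) \odot (F^{(\ast)})'\big)',
\end{equation*}
and the final step is to recognize the right-hand side as $M(E^{(\ast)},F^{(\ast)})$ by reapplying the multiplier formula $M(X,Y) = (X \odot Y')'$ with $X = E^{(\ast)}$ and $Y = F^{(\ast)}$. This last application requires that $F^{(\ast)}$ be a Banach ideal space (so that the multiplier theorem from \cite{KLM14} applies) and have the Fatou property; the normability of $E^{(\ast)}, F^{(\ast)}$ in the hypotheses is what lets us treat the symmetrizations as genuine Banach (not merely quasi-Banach) ideal spaces, and the Fatou property of $F^{(\ast)}$ is inherited from that of $F$ as noted in the text.

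The hard part will be the bookkeeping of hypotheses rather than any single deep estimate: each invocation of Theorem \ref{dual-comut} and Corollary \ref{ilocz-gw-komut} carries its own nontriviality and operator-boundedness requirements, and one must check that the hypotheses imposed on $E, F, F'$ propagate correctly to the composite space $E \odot F'$ and to the symmetrizations. In particular, verifying boundedness of $H^{\ast}$ on $E \odot F'$ and of $H_r^{\ast}$ on both $E$ and $F'$ are the substantive analytic inputs, and ensuring that all the intermediate spaces are nontrivial and normable (so the duality and product identities genuinely hold as equalities of Banach ideal spaces, not just continuous inclusions) is where the care is needed. Once the dictionary $M = (\cdot \odot \cdot')'$ is in place and the two commutation theorems are correctly applied, the identity follows by pure composition.
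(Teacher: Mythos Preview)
Your proposal is correct and follows essentially the same route as the paper's proof: both rely on the identity $M(X,Y)=(X\odot Y')'$ (what the paper obtains via Theorem~4 of \cite{KLM14} together with Lozanovski{\u\i} factorization, and later cites as Corollary~3 of \cite{KLM14}), together with Theorem~\ref{dual-comut} applied once to $F$ and once to $E\odot F'$, and Corollary~\ref{ilocz-gw-komut} applied to the pair $E,F'$. The only cosmetic difference is orientation: the paper starts from $M(E^{(\ast)},F^{(\ast)})$ and unwinds to $M(E,F)^{(\ast)}$, whereas you run the same chain of equalities in the opposite direction; one small slip is that boundedness of $D_2$ on the decreasing cone comes from the $H_r$ part of hypothesis~(ii) via Remark~\ref{Hr}, not from $H_r^{\ast}$ as you wrote.
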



\vspace{-2mm}

\begin{proof} 
Applying in the subsequent steps several results from \cite{KLM14},  we obtain
\begin{eqnarray*}
M(E^{(*)}, F^{(*)}) &=&  M(E^{(*)}\odot (F^{(*)})^{\prime}, F^{(*)}\odot (F^{(*)})^{\prime}) \\
&& \hspace{1.5cm}  [{\rm by ~ Theorem ~ 4 ~ from ~ \cite{KLM14} ~ with} ~  G = F^{(*)} {\rm ~ a ~ Banach ~ space}] \\
&=& 
M(E^{(*)}\odot (F^{(*)})^{\prime}, L^{1}) \hspace{3mm} [{\rm by ~ the ~ Lozanovskii ~factorization ~ theorem}] \\
&=&
[E^{(*)}\odot (F^{(*)})^{\prime}]^{\prime} = [E^{(*)}\odot (F^{\prime})^{(*)}]^{\prime} \\
& &  \hspace{4cm} [{\rm by ~ Theorem ~ \ref{dual-comut}, ~ since} ~ H^{\ast} ~ {\rm is ~ bounded ~ on} ~F]  \\
&=&
[(E\odot F^{\prime})^{(*)}]^{\prime} \\
&&\hspace{1.5cm} [{\rm by ~ Corollary ~ \ref{ilocz-gw-komut},  ~using ~ the ~ assumptions ~ on} ~ H_r^{\ast} ~{\rm and} ~ H_r ] \\
&=&
[(E\odot F^{\prime})^{\prime}]^{(*)}  \hspace{2mm} [{\rm by ~ Theorem ~ \ref{dual-comut}, ~ since} ~ 
H^{\ast} ~{\rm is ~ bounded ~ on} ~E\odot F^{\prime}]\\
&=& 
M(E, F^{\prime \prime})^{(*)}   \hspace{3.5cm} [{\rm by ~ Corollary ~ 3 ~ from ~ \cite{KLM14}}] \\
&=&
M(E, F)^{(*)}   \hspace{3.5cm} [{\rm by ~ the ~ Fatou ~ property ~ of ~  F}].
\end{eqnarray*} 
\end{proof}

Note that the assumptions on $H_{r}^{\ast }$ and $H_{r}$ in $\left(
ii\right) $ allow us to apply Corollary \ref{ilocz-gw-komut} (see also
Remark \ref{Hr} and condition (\ref{dil})).

Let us comment assumptions of Theorem \ref{multip-komut}.\vspace{2mm}

\textrm{(a)} $(F^{\prime })^{(\ast )}\neq \{0\}$ and $H^{\ast }$ is bounded
on $F$ to get equality $(F^{(\ast )})^{\prime }=(F^{\prime })^{(\ast )}$ and 
$\left\Vert \cdot \right\Vert _{(F^{(\ast )})^{\prime }}\sim \left\Vert
\cdot \right\Vert _{(F^{\prime })^{(\ast )}}$ (see Theorem \ref{dual-comut}%
). In particular, since $(F^{(\ast )})^{\prime }\neq \{0\}$ is a Banach
space, the functional $\left\Vert \cdot \right\Vert _{(F^{\prime })^{(\ast
)}}$ is a quasi-norm.\newline
If we don't have boundedness of $H^{\ast }$ on $F$, then under assumption
that $(F^{\prime })^{(\ast )}\neq \{0\}$ we obtain only the inclusion $%
(F^{\prime })^{(\ast )}\overset{1}{\hookrightarrow }(F^{(\ast )})^{\prime }$
(see Example \ref{Ex4}), and in consequence only the inclusion $[E^{(\ast
)}\odot (F^{(\ast )})^{\prime }]^{\prime }\overset{1}{\hookrightarrow }%
[E^{(\ast )}\odot (F^{\prime })^{(\ast )}]^{\prime }$. \vspace{2mm}

\textrm{(b)} the operator $H_{r}^{\ast }$ is bounded on $E,F^{\prime }$ and
we have estimates of $H_{r}$ in $E,F^{\prime }$ on the cone of nonnegative
nonincreasing elements. \newline
If we don't have these assumptions but only $E^{(\ast )}\odot (F^{\prime
})^{(\ast )}\neq \{0\}$, then $E^{(\ast )}\odot (F^{\prime })^{(\ast )}%
\overset{C_{1}^{2}}{\hookrightarrow }(E\odot F^{\prime })^{(\ast )}$ with
the constant $C_{1}$ from Theorem \ref{CLcommutstar} (see also Corollary \ref%
{ilocz-gw-komut})), and so only inclusion $[(E\odot F^{\prime })^{(\ast
)}]^{\prime }\overset{1/C_{1}^{2}}{\hookrightarrow }[E^{(\ast )}\odot
(F^{\prime })^{(\ast )})]^{\prime }$ is valid. Finally, since the operator $%
H_{r}$ is bounded on $E^{\downarrow }$ ($\left( F^{\prime }\right)
^{\downarrow }$)$,$ so $E^{(\ast )}$ ($\left( F^{\prime }\right) ^{(\ast )}$%
) is a quasi-normed space by Remark \ref{Hr} and Corollary \ref%
{quasi-liniowosc}.\vspace{2mm}

\textrm{(c)} $H^{\ast }$ bounded on $E\odot F^{\prime }$. Without this
assumption we will have only the inclusion $[(E\odot F^{\prime })^{\prime
}]^{(\ast )}\overset{1}{\hookrightarrow }[(E\odot F^{\prime })^{(\ast
)}]^{\prime }$. \vspace{2mm}

Note also that for the spaces $E\odot F^{^{\prime }},(E\odot F^{^{\prime
}})^{^{\prime }}$ we need to have quasi-normed spaces after taking the
symmetrizations $(E\odot F^{\prime })^{(\ast )},[(E\odot F^{\prime
})^{\prime }]^{(\ast )}$. Notice that our assumptions imply that there are
constants $A,B>0$ such that%
\begin{equation}
A\left\Vert x\right\Vert _{(E\odot F^{\prime })^{(\ast )}}\leq \left\Vert
x\right\Vert _{E^{\left( \ast \right) }\odot \left( F^{\prime }\right)
^{\left( \ast \right) }}\leq B\left\Vert x\right\Vert _{(E\odot F^{\prime
})^{(\ast )}}  \label{rownow}
\end{equation}%
for all $x\in (E\odot F^{\prime })^{(\ast )}$ (see Corollary \ref%
{ilocz-gw-komut})$.$ Moreover, the functional $\left\Vert \cdot \right\Vert
_{E^{\left( \ast \right) }\odot \left( F^{\prime }\right) ^{\left( \ast
\right) }}$ is a quasi-norm because for quasi-normed spaces $X,Y$ the Calder%
\'{o}n space $X^{1/2}Y^{1/2}$ is quasi-normed and so is $X\odot Y=\left(
X^{1/2}Y^{1/2}\right) ^{\left( 1/2\right) }$ (cf. Corollary 1 from \cite%
{KLM14}). Consequently, by (\ref{rownow}), the functional $\left\Vert \cdot
\right\Vert _{(E\odot F^{\prime })^{(\ast )}}$ is also a quasi-norm.

The assumption $(\left( E\odot F^{\prime }\right) ^{\prime })^{\left( \ast
\right) }\neq \{0\}$ is necessary, because we apply Theorem \ref{dual-comut}
for the space $E\odot F^{\prime }.$ Finally, applying Theorem \ref%
{dual-comut} for the space $E\odot F^{\prime }$, we get $(\left( E\odot
F^{\prime }\right) ^{\prime })^{\left( \ast \right) }=(\left( E\odot
F^{\prime }\right) ^{\left( \ast \right) })^{\prime }$ with equivalent
respective functionals $\left\Vert \cdot \right\Vert _{(\left( E\odot
F^{\prime }\right) ^{\prime })^{\left( \ast \right) }}\sim \left\Vert \cdot
\right\Vert _{(\left( E\odot F^{\prime }\right) ^{\left( \ast \right)
})^{\prime }}$, whence in particular $(\left( E\odot F^{\prime }\right)
^{\left( \ast \right) })^{\prime }\neq \{0\}.$ Thus $(\left( E\odot
F^{\prime }\right) ^{\left( \ast \right) })^{\prime }$ is a Banach space.
Consequently, the functional on $(\left( E\odot F^{\prime }\right) ^{\prime
})^{\left( \ast \right) }$ is a quasi-norm.\vspace{2mm}


\begin{remark}
\label{R4} In Theorem \ref{multip-komut} we need to have that the spaces $%
E^{(\ast )}\neq \{0\},F^{(\ast )}\neq \{0\}$ are normable spaces, to be able
to use Theorem 4 from {\rm{\cite{KLM14}}}. Note that the condition $%
E^{(\ast )}\neq \{0\}$ has been discussed in Lemma \ref{nonzero}.
\end{remark}


We are coming here to an interesting question.


\begin{problem}
\label{P1} Characterize quasi-normed or normed ideal spaces $E$ for which $%
E^{(\ast )}$ is normable.
\end{problem}


It may happen that $E$ is a quasi-normed and $E^{(\ast )}$ is normable.
Indeed, if we take $E=L^{1}\left( 0,1/2\right) \oplus L^{1/2}[1/2,1)$ then $%
E^{\left( \ast \right) }=L^{1}\left( 0,1\right) .$

Moreover, if $E=L^{p}(w)$ with $0<p<\infty $ and with the weight $w\colon
(0,\infty )\rightarrow (0,\infty )$, then $E^{(\ast )}=(L^{p}(w))^{(\ast
)}=\Lambda _{p,w^{p}}$ is the \textit{Lorentz space} with the quasi-norm 
\begin{equation*}
\Vert x\Vert _{\Lambda _{p,w^{p}}}=\left( \int_{0}^{\infty }x^{\ast
}(t)^{p}w(t)^{p}\,dt\right) ^{1/p}.
\end{equation*}%
Assume that a weight function $w$ is locally integrable and satisfies the
following conditions:\newline
-- $W_{p}(t)=\int_{0}^{t}w(s)^{p}ds<\infty $ for all $t>0$ (this gives $%
\Lambda _{p,w^{p}}\neq \{0\}$), \newline
-- $W_{p}\in \Delta _{2}$, that is, there is a constant $C>0$ such that $%
W_{p}(2t)\leq CW_{p}(t)$ for all $t>0$ (then $\Lambda _{p,w^{p}}$ is a
linear space \cite[Remark 1.3 and Theorem 1.4]{CKMP}) and\newline
-- $W_{p}(\infty )=\int_{0}^{\infty }w(s)^{p}ds=\infty $, otherwise, $%
\Lambda _{p,w^{p}}$ is not separable.

Note that $\Lambda _{p,w^{p}}\neq \{0\}$ if and only if $%
\int_{0}^{t}w(s)^{p}ds<\infty $ for some $t>0.$ It is known that for $%
1<p<\infty $ the Lorentz space $\Lambda _{p,w^{p}}$ is normable if and only
if $\int_{t}^{\infty }s^{-p}w(s)^{p}ds\leq Ct^{-p}\int_{0}^{t}w(s)^{p}ds$
for all $t>0$ (see \cite[Theorem A]{KM04}, \cite[Theorem 12]{KMP07}).
Moreover, $\Lambda _{1,w}$ is normable if and only $W_{1}(t)/t$ is a
pseudo-decreasing function, that is, a decreasing with a constant (see \cite[%
Theorem 4]{KM04}, \cite[p. 104]{KMP07}). If $0<p<1$, then $\Lambda
_{p,w^{p}} $ is not normable since it contains copy of $l^{p}$ (see \cite[%
Theorem 1]{KM04}). \vspace{2mm}

If $E=L^{\infty }(w)$ with the weight $w\colon (0,\infty )\rightarrow
(0,\infty )$, then $E^{(\ast )}=(L^{\infty }(w))^{(\ast )}=M_{w}$ is the 
\textit{Marcinkiewicz space} generated by the functional $\Vert x\Vert
_{M_{w}}=\sup_{t>0}w(t)\,x^{\ast }(t)$. We do not exclude the case $%
M_{w}=\left\{ 0\right\} $. We assume that the fundamental function 
\begin{equation*}
\tilde{w}(t):=\Vert \chi _{(0,t)}\Vert _{M_{w}}=\sup_{s>0}w(s)\,\chi
_{(0,t)}(s)=\sup_{0<s<t}w(s)
\end{equation*}%
satisfies $\tilde{w}(t)<\infty $ for each $t>0.$ This function is increasing
and 
\begin{equation*}
\sup_{t>0}\tilde{w}(t)\,x^{\ast }(t)=\sup_{t>0}w(t)\,x^{\ast }(t).
\end{equation*}%
A nontrivial part of the proof of the last equality is the estimate 
\begin{equation*}
\sup_{t>0}\tilde{w}(t)\,x^{\ast }(t)=\sup_{t>0}[(\sup_{0<s\leq
t}w(s))\,x^{\ast }(t)]\leq \sup_{t>0}[\sup_{0<s\leq t}w(s)\,x^{\ast
}(s)]\leq \sup_{s>0}w(s)\,x^{\ast }(s).
\end{equation*}%
The functional $\Vert \cdot \Vert _{M_{w}}$ is a quasi-norm if and only if $%
\tilde{w}\in \Delta _{2}$, that is, there exists a constant $D\geq 1$ such
that $\tilde{w}(2t)\leq D\,\tilde{w}(t)$ for all $t>0$ (see Haaker \cite[%
Theorem 1.1]{Ha70}). The Marcinkiewicz space $M_{w}$ is normable if and only
if there exists a constant $B\geq 1$ such that $\int_{0}^{t}\dfrac{1}{\tilde{%
w}(s)}\,ds\leq B\dfrac{t}{\tilde{w}(t)}$ for all $t>0$ (see Haaker \cite[%
Theorem 2.4]{Ha70}).


\begin{example}
\label{Examp5}

{\rm We will apply Theorem \ref{multip-komut} with $%
E=L^{p}(t^{a}),F=L^{q}(t^{b})$ and $a,b\in R$. We need to check the
respective assumptions.\newline
(a) Suppose {$1<q<p<\infty .$} Then: \newline
$1^{o}$\ $E^{(\ast )}\neq \{0\},F^{(\ast )}\neq \{0\}$ are normable spaces
if and only if $-1/p<a<1-1/p,-1/q<b<1-1/q$ -- see the discussion following
Problem \ref{P1}. Moreover, the operator $D_{2}$ is bounded on $%
F^{\downarrow }$ if and only if $W_{q}\in \Delta _{2}$ (see Corollary \ref%
{quasi-liniowosc} and \cite[Remark 1.3, Theorem 1.4]{CKMP}), which gives $%
b>-1/q.$\newline
$2^{o}$ $(F^{\prime })^{(\ast )}=[L^{q^{\prime }}(t^{-b})]^{(\ast )}\neq
\{0\}\Longleftrightarrow b<1-1/q;$\newline
$3^{o}$ the condition $(\left( E\odot F^{\prime }\right) ^{\prime })^{\left(
\ast \right) }=(L^{u}(t^{a-b})^{\prime })^{\left( \ast \right) }\neq \{0\},$ 
$1/u=1/p+1/q^{\prime },$ requires in particular that $\left(
L^{u}(t^{a-b})\right) ^{\prime }\neq \{0\}.$ Note that $u=\frac{pq}{pq+q-p}%
>1 $ because $1<q<p.$ Then $\{0\}\neq \left( L^{u}(t^{a-b})\right) ^{\prime
}=L^{u^{\prime }}(t^{b-a})$\emph{\ where }$\frac{1}{u}+\frac{1}{u^{\prime }}%
=1.$ Consequently, $\left( L^{u^{\prime }}(t^{b-a})\right) ^{\left( \ast
\right) }\neq \{0\}${\ }if and only if $\left( b-a\right) u^{\prime }>-1$.
Thus $\left( b-a\right) \frac{pq}{p-q}>-1$.\newline
Moreover, for the cases $4^{\circ }$--$7^{\circ }$ see section 1, the
discussion above inequality (\ref{equalityHardy}), \newline
$4^{o}$. $H^{\ast }$ is bounded on $F=L^{q}(t^{b})\Longleftrightarrow b>-1/q$%
, \newline
$5^{o}$. $H^{\ast }$ is bounded on $E\odot F^{\prime }=L^{p}(t^{a})\odot
L^{q^{\prime }}(t^{-b})=L^{u}(t^{a-b})$ if and only if: \newline
(i) $a-b>-1/u=-(1/p+1/q^{\prime })=1/q-1/p-1,$ this condition is satisfied
automatically by $1^{o},$\newline
(ii) $1\leq u=1/p+1/q^{\prime },$ whence $1/q-1/p\geq 0;$ this condition is
satisfied automatically by $q<p.$ \newline
$6^{o}$. $H_{r}^{\ast }$ is bounded on $E,F^{\prime }\Longleftrightarrow
a>-1/p,b<1-1/q$ (independent of $r$), \newline
$7^{o}$. $H_{r}$ is bounded on $E,F^{\prime }\Longleftrightarrow r\left(
a+1/p\right) <1{\ and}$ $r(-b+1/q^{\prime })<1$. For small $r>0$ the last
two estimates are valid, because $a+1/p\in \left( 0,1\right) $ and $b+1/q\in
\left( 0,1\right) $ by $1^{o}$. Summing up, the assumptions on $a,b$ are the
following 
\begin{equation}
-1/p<a<1-1/p,-1/q<b<1-1/q\text{ and }\left( b-a\right) \frac{pq}{p-q}>-1.
\label{Ex5, 10}
\end{equation}%
Theorem \ref{multip-komut} gives us that if $1<q<p<\infty $ and conditions (%
\ref{Ex5, 10}) hold, then 
\begin{equation}
M(\Lambda _{p,t^{ap}},\Lambda _{q,t^{bq}})=\left[ M\left(
L^{p}(t^{a}),L^{q}(t^{b})\right) \right] ^{(\ast )}=\left[ L^{s}(t^{b-a})%
\right] ^{(\ast )}=\Lambda _{s,t^{(b-a)s}},  \label{equality11}
\end{equation}%
where $1/s=1/q-1/p$. \newline
In the following cases we analogously check the required assumptions.\newline
(b) For $q=1<p<\infty .$ For the respective condition $3^{o}$ we have $%
(\left( E\odot F^{\prime }\right) ^{\prime })^{\left( \ast \right)
}=(L^{p^{\prime }}(t^{b-a}))^{\left( \ast \right) }\neq \left\{ 0\right\} $
holds if and only if $\left( b-a\right) \frac{p}{p-1}>-1.$ We check others
assumptions similarly getting the identification (\ref{equality11}) with $%
1/s=1-1/p$ under the assumption%
\begin{equation*}
-1/p<a<1-1/p,-1<b<0\text{ and }\left( b-a\right) \frac{p}{p-1}>-1.
\end{equation*}%
(c) For $q=1,p=\infty $ we have 
\begin{equation*}
M(M_{t^{a}},\Lambda _{1,t^{b}})=\left[ M\left( L^{\infty
}(t^{a}),L^{1}(t^{b})\right) \right] ^{(\ast )}=\left[ L^{1}(t^{b-a})\right]
^{(\ast )}=\Lambda _{1,t^{b-a}},
\end{equation*}%
whenever $-1<b\leq 0\leq a<1$ and $b-a>-1$ (the last inequality comes from
the assumption $3^{o}$). \newline
(d) For $1\leq p=q<\infty $ it holds 
\begin{equation*}
M(\Lambda _{p,t^{ap}},\Lambda _{p,t^{bp}})=\left[
M(L^{p}(t^{a}),L^{p}(t^{b}))\right] ^{(\ast )}=\left[ L^{\infty }(t^{b-a})%
\right] ^{(\ast )}=M_{t^{b-a}},
\end{equation*}%
whenever $-1/p<a,b<1-1/p$ and $0\leq b-a<1/p$ (the last inequality comes
from the assumption $3^{o}$ and $5^{o}$). \newline
(e) For $p=q=\infty $ it holds 
\begin{equation*}
M(M_{t^{a}},M_{t^{b}})=\left[ M(L^{\infty }(t^{a}),L^{\infty }(t^{b}))\right]
^{(\ast )}=\left[ L^{\infty }(t^{b-a})\right] ^{(\ast )}=M_{t^{b-a}},
\end{equation*}%
whenever $a,b\in \left( 0,1\right) $ and $b-a\geq 0$ (the last inequality
comes from the assumption $3^{o})$. Note that for $r\in \left( 0,\min (1,%
\frac{1}{a},\frac{1}{1+b}\right) )$ the assumption $\left( ii\right) $ of
Theorem \ref{multip-komut} is satisfied. Note also that $M_{t^{b-a}}=\left\{
0\right\} $ if $b<a.$ }
\end{example}

The above result can be applied to describe the space of multipliers between
classical Lorentz spaces $L^{p,q}$, which particular cases were proved in 
\cite{Na95}--\cite{Na17}. \vspace{2mm}

For $0<p,q\leq \infty $ consider the classical \textit{Lorentz function
spaces $L^{p,q}=L^{p,q}(I)$} on $I=(0,1)$ or $I=(0,\infty )$ defined by the
quasi-norms 
\begin{equation*}
\Vert x\Vert _{p,q}=\left\{ 
\begin{array}{ll}
\mbox{$\Big(\int\limits_0^{m(I)} [t^{1/p} x^*(t)]^q \frac{dt}{t}
\Big)^{1/q}$,} & \text{for }0<p\leq \infty ,0<q<\infty ,\vspace{1mm} \\ 
\mbox{$\sup\limits_{0 < t < m(I)} t^{1/p} x^*(t)$,} & 
\mbox{for $0 < p \leq
\infty, q = \infty$}.\vspace{1mm}%
\end{array}%
\right.
\end{equation*}%
Note that $L^{p,p}\equiv L^{p}$ for $0<p\leq \infty $ and $L^{\infty
,q}=\{0\}$ for $0<q<\infty .$ Consequently, for $p=\infty $ we consider only
the case $q=\infty .$ Recall that $(L^{p,q})^{(r)}=L^{pr,qr},$ where $%
(L^{p,q})^{(r)}$ is the $r$--convexification ($1<r<\infty $) of the Lorentz
space $L^{p,q}$.

We characterize below all multipliers $M(L^{p_{1},q_{1}},L^{p_{2},q_{2}}).$
First, we describe cases when one of spaces $L^{p_{1},q_{1}}$ or $%
L^{p_{2},q_{2}}$ is equal to $L^{\infty }$ because this limit cases do not
suit to the formal model of the below theorem.

\begin{remark}
If $p_{1}=q_{1}=\infty ,$ then $%
M(L^{p_{1},q_{1}},L^{p_{2},q_{2}})=L^{p_{2},q_{2}}$ for all $p_{2},q_{2}>0.$
If $p_{2}=q_{2}=\infty $ we need only to consider the case $0<p_{1}<\infty $
in which $M(L^{p_{1},q_{1}},L^{p_{2},q_{2}})=M(L^{p_{1},q_{1}},L^{\infty
})=\left\{ 0\right\} $ for each $q_{1}>0$ by Proposition 2.3(ii),(iv) in 
\rm{\cite{KLM13}}.
\end{remark}

\begin{theorem}
\label{Lorentz-multi} Let $0<p_{1},p_{2}<\infty ,$ $0<q_{1},q_{2}\leq \infty 
$ and $I=\left( 0,1\right) $ or $I=\left( 0,\infty \right) $.

\begin{itemize}
\item[$(i)$] If either $p_{1}<p_{2}$ or $p_{1}=p_{2}$ and $q_{1}>q_{2},$
then $M(L^{p_{1},q_{1}},L^{p_{2},q_{2}})=\{0\}.$

\item[$(ii)$] If either $p_{1}>p_{2}$ or $p_{1}=p_{2}$ and $q_{1}\leq q_{2},$
then 
\begin{equation*}
M(L^{p_{1},q_{1}},L^{p_{2},q_{2}})=L^{p_{3},q_{3}},
\end{equation*}%
where%
\begin{equation*}
\frac{1}{p_{3}}=\frac{1}{p_{2}}-\frac{1}{p_{1}}\text{ and }\frac{1}{q_{3}}%
=\left\{ 
\begin{array}{ccc}
\frac{1}{q_{2}}-\frac{1}{q_{1}} & \text{if} & q_{1}>q_{2}, \\ 
0 & \text{if} & q_{1}\leq q_{2}%
\end{array}%
\right. .
\end{equation*}
\end{itemize}
\end{theorem}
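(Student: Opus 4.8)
The plan is to realize each classical Lorentz space as a symmetrization of a weighted Lebesgue space (or weighted $L^{\infty}$) and then read the multiplier off from the arithmetic developed above. From the definition of $\|\cdot\|_{p,q}$ one checks, for $0<q<\infty$, that
\begin{equation*}
L^{p,q}=\big[L^{q}\big(t^{1/p-1/q}\big)\big]^{(\ast)}=\Lambda_{q,\,t^{q/p-1}},\qquad L^{p,\infty}=\big[L^{\infty}\big(t^{1/p}\big)\big]^{(\ast)}=M_{t^{1/p}}.
\end{equation*}
First I would record the elementary convexification identity $M(E,F)^{(r)}=M(E^{(r)},F^{(r)})$, valid for every $r>0$ (since $x\in M(E^{(r)},F^{(r)})$ iff $|x|^{r}\in M(E,F)$), together with $(L^{p,q})^{(r)}=L^{rp,rq}$. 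Choosing $r$ large reduces the statement to indices $p_{1},p_{2}\in(1,\infty)$ and $q_{1},q_{2}\in(1,\infty]$, after which the convexification is undone (the target transforms consistently because $(L^{p_{3},q_{3}})^{(r)}=L^{rp_{3},rq_{3}}$). Both $I=(0,1)$ and $I=(0,\infty)$ are treated alike.

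For the principal part of $(ii)$, $q_{1}>q_{2}$ (which forces $p_{1}>p_{2}$, as the case $p_{1}=p_{2},\,q_{1}>q_{2}$ belongs to $(i)$), I would apply Theorem \ref{multip-komut} to $E=L^{q_{1}}(t^{1/p_{1}-1/q_{1}})$ and $F=L^{q_{2}}(t^{1/p_{2}-1/q_{2}})$, so that $E^{(\ast)}=L^{p_{1},q_{1}}$ and $F^{(\ast)}=L^{p_{2},q_{2}}$. With these power weights and $1<q_{2}<q_{1}$, all hypotheses of Theorem \ref{multip-komut} (normability of $E^{(\ast)},F^{(\ast)}$, boundedness of $H^{\ast}$, $H_{r}^{\ast}$ and $H_{r}$ on the relevant spaces and cones, and the nontriviality requirements) reduce to explicit inequalities between exponents, which is exactly the verification performed in Example \ref{Examp5}. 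Theorem \ref{multip-komut} then gives $M(E^{(\ast)},F^{(\ast)})=M(E,F)^{(\ast)}$, while the classical H\"older computation for weighted Lebesgue spaces yields $M(L^{q_{1}}(w_{1}),L^{q_{2}}(w_{2}))=L^{s}(w_{2}/w_{1})$ with $1/s=1/q_{2}-1/q_{1}$. Since $w_{2}/w_{1}=t^{1/p_{3}-1/q_{3}}$, symmetrizing returns $L^{p_{3},q_{3}}$. The boundary case $q_{1}=q_{2}$ (with $p_{1}>p_{2}$) is identical, using $M(L^{q},L^{q})=L^{\infty}$, and produces $M_{t^{1/p_{3}}}=L^{p_{3},\infty}$, i.e. the $1/q_{3}=0$ branch.

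The remaining part of $(ii)$, $q_{1}<q_{2}$ (hence $p_{1}>p_{2}$), does not fall under the arithmetic and I would treat it by a squeeze. For the lower inclusion, $q_{1}\le q_{2}$ gives $L^{p_{2},q_{1}}\hookrightarrow L^{p_{2},q_{2}}$, so $M(L^{p_{1},q_{1}},L^{p_{2},q_{2}})\supseteq M(L^{p_{1},q_{1}},L^{p_{2},q_{1}})=L^{p_{3},\infty}$ by the equal-index case just established. For the upper inclusion I would test on initial intervals: $M$ is symmetric, so for $x=x^{\ast}\in M$ one has, using $x^{\ast}\ge x^{\ast}(t)$ on $(0,t)$,
\begin{equation*}
x^{\ast}(t)\,\|\chi_{(0,t)}\|_{p_{2},q_{2}}\le\|x\,\chi_{(0,t)}\|_{p_{2},q_{2}}\le\|x\|_{M}\,\|\chi_{(0,t)}\|_{p_{1},q_{1}},
\end{equation*}
and since $\|\chi_{(0,t)}\|_{p_{i},q_{i}}\sim t^{1/p_{i}}$ this reads $t^{1/p_{3}}x^{\ast}(t)\lesssim\|x\|_{M}$, i.e. $x\in L^{p_{3},\infty}$. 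The same inequality settles $(i)$: when $p_{1}<p_{2}$ the exponent in $x^{\ast}(t)\lesssim t^{1/p_{1}-1/p_{2}}$ is positive, forcing $x^{\ast}(0^{+})=0$ and hence $x^{\ast}\equiv0$ by monotonicity; when $p_{1}=p_{2}=p$ and $q_{1}>q_{2}$ I would instead exhibit a logarithmic witness $y^{\ast}(t)=t^{-1/p}(\log\tfrac1t)^{-\beta}$ with $1/q_{1}<\beta\le1/q_{2}$, which lies in $L^{p,q_{1}}\setminus L^{p,q_{2}}$ and is supported near $0$, so any nonzero $x=x^{\ast}$ (bounded below on some $(0,a)$) sends it outside $L^{p_{2},q_{2}}$, contradicting $x\in M$.

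The principal obstacle is the case $q_{1}<q_{2}$: there symmetrization does not commute with the multiplier construction, the hypotheses of Theorem \ref{multip-komut} genuinely fail, and in fact $M(L^{q_{1}},L^{q_{2}})=\{0\}$ for $q_{1}<q_{2}$ even though the correct Lorentz multiplier is the nonzero Marcinkiewicz space $L^{p_{3},\infty}$; hence the arithmetic route is unavailable and the direct squeeze is forced. The second delicate point is the sharp upper inclusion $M\subseteq L^{p_{3},q_{3}}$ with finite $q_{3}$ when $q_{1}>q_{2}$: testing on characteristic functions alone yields only $L^{p_{3},\infty}$, and it is the full force of Theorem \ref{multip-komut}, through the $H_{r}^{\ast}$ and $H_{r}$ estimates verified in Example \ref{Examp5}, that sharpens the second index to its correct finite value. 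Finally, some bookkeeping is needed across the convexification and at the limiting indices $q_{i}\in\{1,\infty\}$, which correspond to the boundary cases of Example \ref{Examp5}.
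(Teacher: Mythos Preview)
Your proposal is correct and follows the paper's overall strategy closely: the same convexification $M(E,F)^{(r)}=M(E^{(r)},F^{(r)})$ with $(L^{p,q})^{(r)}=L^{rp,rq}$ to reduce to indices $>1$, the same three-way case split on the relation between $q_{1}$ and $q_{2}$, and the same squeeze for $q_{1}<q_{2}$ (lower bound via $L^{p_{2},q_{1}}\hookrightarrow L^{p_{2},q_{2}}$, upper bound via the fundamental function, which the paper phrases as ``maximality of the Marcinkiewicz space'' citing \cite[Theorem~2.2]{KLM13}).

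There are two points worth noting. First, in the principal case $q_{1}>q_{2}$ you take the route through Theorem~\ref{multip-komut} and Example~\ref{Examp5}; the paper records this as an \emph{alternative} argument but its primary proof is shorter: it quotes the product identity $L^{p_{3},q_{3}}\odot L^{p_{1},q_{1}}=L^{p_{2},q_{2}}$ from \cite{CS17} and applies the cancellation law \cite[Theorem~4]{KLM14} directly. Your route works, but Example~\ref{Examp5} as written does not literally cover $1<q_{2}<q_{1}=\infty$ (only (a) with $q_{1}<\infty$, and the limiting cases (c),(e)), so you would have to redo the hypothesis check of Theorem~\ref{multip-komut} for $E=L^{\infty}(t^{1/p_{1}})$; the product-plus-cancellation argument avoids this edge case entirely. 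Second, for part~(i) you give a more elementary argument than the paper: the characteristic-function test for $p_{1}<p_{2}$ and an explicit logarithmic witness for $p_{1}=p_{2},\,q_{1}>q_{2}$, whereas the paper localizes to sets $A_{n}=\{1/n\le |x|\le n\}$ and derives a forbidden embedding $L^{p_{1},q_{1}}(A_{n})\hookrightarrow L^{p_{2},q_{2}}(A_{n})$. Both are valid; yours is more self-contained, the paper's is index-free. Finally, your parenthetical ``(hence $p_{1}>p_{2}$)'' in the $q_{1}<q_{2}$ case is a slip---$p_{1}=p_{2}$ is allowed there---but your squeeze argument goes through verbatim in that situation with $L^{p_{3},\infty}=L^{\infty}$, which is also how the paper handles its Case~C.
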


\begin{proof}
$\left( i\right) $ It is enough to apply Proposition 2.3(ii),(iv)
in \cite{KLM13} and the respective\ strict embeddings between Lorentz spaces 
$L^{p,q}$ which are well known$.$\ \newline
We present, however, also a direct proof which idea has been taken from \cite[%
Theorem 2]{MP89} (see also \cite[Proposition 2.3]{KLM13}). We have two
inclusions: $L^{p,q_{1}}\hookrightarrow L^{p,q_{2}}$ for $0<q_{1}<q_{2}\leq
\infty $ and for any $0<p<\infty $ (see \cite[Proposition 4.2]{BS88} or \cite%
[Proposition 1.4.10]{Gr08}); also if $0<m(A)<\infty $, then $L^{p_{2},\infty
}(A)\hookrightarrow L^{p_{1},q}(A)$ for $0<p_{1}<p_{2}<\infty $ and for any $%
0<q\leq \infty $ since for $0<q<\infty $ 
\begin{eqnarray*}
\Vert x\Vert _{p_{1},q} &=&(\int_{0}^{m(A)}[t^{1/p_{1}}x^{\ast }(t)]^{q}%
\frac{dt}{t})^{1/q} \\
&\leq &\sup\limits_{0<t<m(A)}t^{1/p_{2}}x^{\ast
}(t)\,(\int_{0}^{m(A)}t^{(1/p_{1}-1/p_{2})q}\,\frac{dt}{t})^{1/q}=C\,\Vert
x\Vert _{p_{2},\infty },
\end{eqnarray*}%
where $C=C(p_{1},p_{2},q,m(A))$. If $q=\infty $, then 
\begin{equation*}
\Vert x\Vert _{p_{1},\infty }\leq \Vert x\Vert _{p_{2},\infty
}\sup\limits_{0<t<m(A)}t^{1/p_{1}-1/p_{2}}=m(A)^{1/p_{1}-1/p_{2}}\,\Vert
x\Vert _{p_{2},\infty }.
\end{equation*}

Now, if there exists $0 \neq x \in M(L^{p_1, q_1}, L^{p_2, q_2})$, then $%
|x(t)| > 0$ for almost all $t \in A$ with $0 < m(A) < \infty$. Let $A_n =
\{t \in A \colon \frac{1}{n} \leq |x(t)| \leq n\}, n = 1, 2, \ldots$. Then $%
A_n \nearrow A$ and so $m(A_n) > 0$ for $n \geq n_0$. If $y \in L^{p_1,
q_1}(A_n)$, then $y\chi_{A_n} \in L^{p_1, q_1}(I)$ and 
\begin{equation*}
\frac{1}{n} y \chi_{A_n} \leq x y\chi_{A_n} \in L^{p_2, q_2}(A_n).
\end{equation*}
Hence, $L^{p_1, q_1}(A_n) \hookrightarrow L^{p_2, q_2}(A_n)$ for $n \geq n_0$%
. This is, of course, not possible for $p_1 < p_2$ since then it will be $%
L^{p_2, q_2}(A_n) \hookrightarrow L^{p_2, \infty}(A_n) \hookrightarrow
L^{p_1, q_1}(A_n)$ and consequently $L^{p_1, q_1}(A_n) = L^{p_2, q_2}(A_n)$,
which is not possible because $m(A_n) > 0$ for $n \geq n_0$.

Also it is not possible for $p_{1}=p_{2}$ and $q_{2}<q_{1}$ since again $%
L^{p_{2},q_{2}}(A_{n})\hookrightarrow L^{p_{1},q_{2}}(A_{n})\hookrightarrow
L^{p_{1},q_{1}}(A_{n})$ and we get a contradiction.

$\left( ii\right) $ Assume first that $1<p_{1},p_{2},q_{1},q_{2}$. Note that 
$L^{p,q}$ is normable for $p,q>1$. We divide the proof into three parts.%
\newline

Case A. Suppose that $1<p_{2}<p_{1}<\infty $ and $1<q_{2}\leq q_{1}\leq
\infty $. It means that $1/p_{3}=1/p_{2}-1/p_{1}>0$ and $%
1/q_{3}=1/q_{2}-1/q_{1}\geq 0$. Thus we can write 
\begin{equation*}
L^{p_{3},q_{3}}\odot L^{p_{1},q_{1}}=L^{p_{2},q_{2}},
\end{equation*}%
according to \cite{CS17} (cf. \cite{KLM14}). Applying cancellation law from 
\cite[Theorem 4]{KLM14} we see that 
\begin{equation*}
M(L^{p_{1},q_{1}},L^{p_{2},q_{2}})=M(L^{p_{1},q_{1}}\odot L^{\infty
},L^{p_{3},q_{3}}\odot L^{p_{1},q_{1}})=M(L^{\infty
},L^{p_{3},q_{3}})=L^{p_{3},q_{3}}.
\end{equation*}

We also present an independent proof basing on Example \ref{Examp5}. Let $%
1<p_{2}<p_{1}$ and $1<q_{2}<q_{1}<\infty $. Then, from Example \ref{Examp5}%
(a) and (b) with $a=1/p_{1}-1/q_{1},b=1/p_{2}-1/q_{2}$, we obtain 
\begin{equation*}
M(L^{p_{1},q_{1}},L^{p_{2},q_{2}})=M(\Lambda
_{q_{1},t^{q_{1}/p_{1}-1}},\Lambda _{q_{2},t^{q_{2}/p_{2}-1}})
\end{equation*}%
\begin{equation*}
=\Lambda _{q_{3},t^{(b-a)q_{3}}}=\Lambda
_{q_{3},t^{q_{3}/p_{3}-1}}=L^{p_{3},q_{3}}.
\end{equation*}

Case B. Let $1<p_{2}<p_{1}<\infty $ and $1<q_{1}<q_{2}\leq \infty $. Then it
is easy to see that assumptions of Theorem 2.2 in \cite{KLM13} are
satisfied. Consequently, the fundamental function of $%
M(L^{p_{1},q_{1}},L^{p_{2},q_{2}})$ is equivalent to $t^{1/p_{2}-1/p_{1}}$.
Applying the maximality of Marcinkiewicz space (see \cite{BS88} and
condition (29) in \cite{KLM14}) w get 
\begin{equation*}
M(L^{p_{1},q_{1}},L^{p_{2},q_{2}})\subset
M_{t^{1/p_{2}-1/p_{1}}}=L^{p_{3},\infty }.
\end{equation*}%
On the other hand, using once again \cite{CS17} we can write 
\begin{equation*}
L^{p_{3},\infty }\odot L^{p_{1},q_{1}}=L^{p_{2},q_{1}}\subset
L^{p_{2},q_{2}}.
\end{equation*}%
Thus 
\begin{equation*}
L^{p_{3},\infty }\subset M(L^{p_{1},q_{1}},L^{p_{2},q_{1}})\subset
M(L^{p_{1},q_{1}},L^{p_{2},q_{2}}).
\end{equation*}%

Case C. Suppose $1<p_{1}=p_{2}<\infty $ and $1<q_{1}\leq q_{2}$. Note that
both spaces $L^{p_{2},1}$ and $L^{p_{2},\infty }$ have the same fundamental
functions. Moreover, $L^{p_{2},1}\subset L^{p_{2},q_{1}}$ and $%
L^{p_{2},q_{2}}\subset L^{p_{2},\infty }$ (see the proof of $\left( i\right) 
$)$.$ In consequence, 
\begin{equation*}
L^{\infty }\subset M(L^{p_{2},q_{1}},L^{p_{2},q_{2}})\subset
M(L^{p_{2},1},L^{p_{2},\infty })\subset L^{\infty },
\end{equation*}%
where the last inclusion follows from \cite{KLM14}, Proposition 3. In fact,
we have the equality $M(L^{p_{2},1},L^{p_{2},\infty })=L^{\infty },$ becuase
the reverse embedding $L^{\infty }\subset M(L^{p_{2},1},L^{p_{2},\infty })$
is clear by $L^{p_{2},1}\subset L^{p_{2},\infty }.$\newline
In order to remove assumption $1<p_{2},p_{1},q_{2},q_{1}$ we need to notice
that 
\begin{equation*}
(L^{p_{2},q_{2}})^{(r)}=L^{p_{2}r,q_{2}r}
\end{equation*}%
and 
\begin{equation*}
M(E^{(r)},F^{(r)})=M(E,F)^{(r)},
\end{equation*}%
for arbitrary $r>0$. Since multiplying by $r$ does not damage relations
between $p_{2},p_{1},q_{2}$ and $q_{1}$, we may choose $r$ so that $%
1<rp_{2},rp_{1},rq_{2},rq_{1}$ and apply the previous part of theorem
together with above equalities. 
\end{proof}

In particular, for $p_{2}=q_{2}=1$ we obtain from Theorem \ref{Lorentz-multi}
the well-known results on K\"{o}the duality (see \cite{Cw73}, \cite{Cw75}, 
\cite{CS72}, \cite{Hu66} and also Grafakos book \cite[pp. 52--55]{Gr08}):

(a) If $0 < p < 1$ and $0 < q < \infty$, then $(L^{p, q})^{\prime} = \{0\}$
(Hunt \cite[p. 262]{Hu66}).

(b) If $0 < p < 1$, then $(L^{p, \infty})^{\prime} = \{0\}$ (Haaker \cite[%
Theorem 4.2]{Ha70}, Cwikel \cite[Theorem 1]{Cw73}).

(c) If $1<q\leq \infty $, then $(L^{1,q})^{\prime }=\{0\}$ (Hunt \cite[pp.
262--263]{Hu66}; observe that Cwikel--Sagher \cite{CS72} proved that the
dual space of $(L^{1,\infty })^{\ast }\neq \{0\}$ but there is no its exact
description).

(d) If $1<p<\infty ,1\leq q\leq \infty $, then $(L^{p,q})^{\prime
}=L^{p^{\prime },q^{\prime }}$ (Hunt \cite[Theorem 2.7]{Hu66}).

(e) If $1<p<\infty ,0<q\leq 1$, then $(L^{p,q})^{\prime }=L^{p^{\prime
},\infty }$ (Hunt \cite[Theorem 2.7]{Hu66}). \vspace{1mm}

Moreover, for example, for $q_{1}=p_{2}=1$ we obtain from Theorem \ref%
{Lorentz-multi} some probably new results on pointwise multipliers:

(f) If $0 < p < 1$ and $0 < q < \infty$, then $M(L^{p, 1}, L^{1, q}) = \{0\}$%
.

(g) If $p=1$ and $1\leq q\leq \infty $, then $M(L^{1},L^{1,q})=L^{\infty }$.
Moreover, $M(L^{1},L^{1,q})=\left\{ 0\right\} $ if $0<q<1.$

(h) If $1<p<\infty $ and $1\leq q<\infty $, then $M(L^{p,1},L^{1,q})=L^{p^{%
\prime },\infty }$.


\section{ Explicit factorization of product spaces}

A {factorization of function or sequence spaces is a powerful tool which
found applications in interpolation theory, geometry of Banach spaces (for
example the idea of indicator function from \cite{Ka92, Ka95}) and operator
theory (for example the proof of Nehari theorem in \cite{Pel03}). Usually it
is enough to know that for each $f\in G$ there exist $g\in E,h\in F$
satisfying $f=gh$ with $\Vert f\Vert _{G}\approx \Vert g\Vert _{E}\Vert
h\Vert _{F}$, i.e. $G=E\odot F$. However, in some cases (see for example 
\cite{CS17}), the existence is not enough and one prefers to know explicite
formulas which for a given $f$ produce $g$ and $h$ as above. }

It is evident how to factorize $f\in L^{p}$ in order to get $f=gh$
satisfying $g\in L^{q},h\in L^{r}$ and $\Vert f\Vert _{p}=\Vert g\Vert
_{q}\Vert h\Vert _{r}$, i.e. $g=|f|^{p/q}$ and $h=|f|^{p/r}\mathrm{sgn}f$,
where $1/p=1/q+1/r$ (see Example \ref{Ex5} below). Similar explicite
formulas follow directly from the respective factorization theorems for
Orlicz spaces \cite[Theorem 10.1(b)]{Ma89} and Calder\'{o}n--Lozanovski\u{\i}
spaces \cite[Theorem 5]{KLM14}, or for Lorentz spaces \cite{CS17}. In this
section we explain how to derive expilicit formulas for factorization of
symmetrized space, once we know the respective formulas for initial space.

\begin{definition}
\label{defexplicite} \textrm{Let $G=E\odot F$. We will say that the \textit{%
explicit factorization for $G=E\odot F$ holds} if we have explicit formulas
for maps $\varphi \colon G\rightarrow E$ and $\psi \colon G\rightarrow F$
such that each $x\in G$ can be written as 
\begin{equation*}
x=\varphi (x)\psi (x)~~and~~\Vert x\Vert _{G}\approx \Vert \varphi (x)\Vert
_{E}\Vert \psi (x)\Vert _{F}.
\end{equation*}%
}
\end{definition}

\begin{example}
\label{Ex5}

\rm{ Supppose $E$ and $F$ are quasi--Banach ideal spaces and $w,w_{0},w
_{1}$ are positive weights such that $w=w_{0}\,w_{1}$. If $G=E\odot F$ and
the explicit factorization holds, then $G(w)=E(w_{0})\odot F(w_{1})$ and the
explicit factorization holds. }

\rm { In particular, $L^{p}(w)=L^{p_{0}}(w_{0})\odot L^{p_{1}}(w_{1})$
with $1/p=1/p_{0}+1/p_{1},w=w_{0}\,w_{1}$ and the maps 
\begin{equation*}
\varphi (x)=\left[ (|x|w)^{p/p_{0}}\mathrm{sgn}x\right] /w_{0}~~and~~\psi
(x)=(|x|w)^{p/p_{1}}/w_{1},
\end{equation*}%
satisfy conditions of Definition \ref{defexplicite}. }
\end{example}

\begin{proof}%
We will show the respective equalities and also maps, which give that
explicit factorizations. \vspace{2mm}

Firstly, we show that $G=E\odot F$ implies $G(w)=E(w_{0})\odot F(w_{1})$
with $w=w_{0}\,w_{1}$. In fact, if $x\in E(w_{0})$ and $y\in F(w_{1})$, then 
$xw_{0}\in E,yw_{1}\in F$, and by assumption $xw_{0}yw_{1}\in G$, whence $%
xy\in G(w)$, that is, $E(w_{0})\odot F(w_{1})\hookrightarrow G(w)$.

Conversely, if $x\in G(w)$, then $x\,w\in G$ and, by the assumption that for 
$G$ the explicit factorization holds, there are maps $\varphi \colon
G\rightarrow E$ and $\psi \colon G\rightarrow F$ such that 
\begin{equation*}
x\,w=\varphi (x\,w)\,\psi (x\,w)~~{and}~~\Vert x\,w\Vert _{G}\approx \Vert
\varphi (x\,w)\Vert _{E}\Vert \psi (x\,w)\Vert _{F}.
\end{equation*}%
Taking 
\begin{equation}
\varphi _{w}(x)=\varphi (x\,w)/w_{0}~~\mathrm{and}~~\psi _{w}(x)=\psi
(x\,w)/w_{1}  \label{explicit2}
\end{equation}%
we obtain $\varphi _{w}(x)\psi _{w}(x)=\frac{\varphi (x\,w)}{w_{0}}\frac{%
\psi (x\,w)}{w_{1}}=\frac{x\,w}{w}=x$ and 
\begin{equation*}
\Vert \varphi _{w}(x)\Vert _{E(w_{0})}\Vert \psi _{w}(x)\Vert
_{F(w_{1})}=\Vert \varphi (x\,w)\Vert _{E}\Vert \psi (x\,w)\Vert _{F}\approx
\Vert x\,w\Vert _{G}=\Vert x\Vert _{G(w)}.
\end{equation*}%
Therefore, $G(w)\hookrightarrow E(w_{0})\odot F(w_{1})$ and (\ref{explicit2}%
) is the explicit factorization.%
\end{proof}%


\begin{theorem}
\label{Th-explicit}Supppose $E$ and $F$ are quasi-Banach ideal spaces such
that the operators $H_{r}$ and $H_{r}^{\ast }$ are bounded on $%
E^{(1/2)},F^{(1/2)}$ for some $r>0$. If for $G=E\odot F$ the explicit
factorization holds, then $G^{(\ast )}=E^{(\ast )}\odot F^{(\ast )}$ and the
explicit factorization holds.
\end{theorem}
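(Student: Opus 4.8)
The plan is to prove the two assertions in turn: first the equality $G^{(\ast)}=E^{(\ast)}\odot F^{(\ast)}$ as sets with equivalent quasinorms, and then the explicit factorization. For the equality I would reduce everything to Corollary \ref{ilocz-gw-komut}, so the first task is to extract its hypotheses from the stated ones. Two applications of Remark \ref{bounded} (using $(E^{(1/2)})^{(1/r)}=E^{(1/(2r))}$) convert ``$H_{r}$ bounded on $E^{(1/2)}$'' into ``$H_{2r}$ bounded on $E$'' and ``$H_{r}^{\ast}$ bounded on $E^{(1/2)}$'' into ``$H_{2r}^{\ast}$ bounded on $E$'', and likewise for $F$. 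Then Remark \ref{Hr} applied with exponent $2r$ shows that $D_{2}$ is bounded on $E^{\downarrow}$ and on $F^{\downarrow}$, i.e.\ (\ref{dil}) holds, so $E^{(\ast)}$ and $F^{(\ast)}$ are quasi-normed by Corollary \ref{quasi-liniowosc}; adding the nontriviality $E^{(\ast)},F^{(\ast)}\neq\{0\}$ (Lemma \ref{nonzero}), Corollary \ref{ilocz-gw-komut} with $r$ replaced by $2r$ yields $(E\odot F)^{(\ast)}=E^{(\ast)}\odot F^{(\ast)}$ with equivalent quasinorms, which is the desired equality $G^{(\ast)}=E^{(\ast)}\odot F^{(\ast)}$.

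For the explicit factorization I would transport the given factorization of $G$ through a Ryff rearrangement. Let $\varphi\colon G\to E$, $\psi\colon G\to F$ be the explicit maps for $G=E\odot F$. Given $x\in G^{(\ast)}$ we have $x^{\ast}\in G$, and (exactly as in the proof of Theorem \ref{CLcommutstar}$(ii)$ the assumptions force $x^{\ast}(\infty)=0$) Ryff's theorem supplies a measure preserving $\omega=\omega_{x}$ with $x^{\ast}\circ\omega=|x|$ a.e. I would then set
\begin{equation*}
\Phi(x)=\big(\varphi(x^{\ast})\circ\omega\big)\,\mathrm{sgn}\,x,\qquad \Psi(x)=\psi(x^{\ast})\circ\omega .
\end{equation*}
Since $\omega$ is measure preserving, $\Phi(x)\Psi(x)=\big(\varphi(x^{\ast})\psi(x^{\ast})\big)\circ\omega\cdot\mathrm{sgn}\,x=(x^{\ast}\circ\omega)\,\mathrm{sgn}\,x=x$, so the product comes out exactly; moreover $|\Phi(x)|$ and $|\Psi(x)|$ are equimeasurable with $|\varphi(x^{\ast})|$ and $|\psi(x^{\ast})|$, whence $\|\Phi(x)\|_{E^{(\ast)}}=\|\varphi(x^{\ast})^{\ast}\|_{E}$ and $\|\Psi(x)\|_{F^{(\ast)}}=\|\psi(x^{\ast})^{\ast}\|_{F}$.

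The main obstacle is precisely this last pair of quantities. The explicit factorization of $G$ only guarantees $\varphi(x^{\ast})\in E$, $\psi(x^{\ast})\in F$ with $\|\varphi(x^{\ast})\|_{E}\,\|\psi(x^{\ast})\|_{F}\approx\|x^{\ast}\|_{G}$, whereas I need the \emph{decreasing rearrangements} $\varphi(x^{\ast})^{\ast}$, $\psi(x^{\ast})^{\ast}$ to lie in $E,F$ with a comparable bound; and since $E,F$ need not be symmetric, $g\in E$ does not force $g^{\ast}\in E$. To get around this I would arrange the factors of the nonincreasing function $x^{\ast}$ to be themselves nonincreasing. This is legitimate because $x^{\ast}\in G^{\downarrow}\subset G^{(\ast)}=E^{(\ast)}\odot F^{(\ast)}$ by the first part, so $x^{\ast}$ does admit a factorization with factors in $E^{(\ast)},F^{(\ast)}$; and the construction in the proof of Theorem \ref{CLcommutstar}$(ii)$ produces such factors explicitly as $D_{2}H_{2r}^{\ast}$ of the original ones, which are automatically nonincreasing and which, being unchanged under $\ast$, satisfy $\|\,\cdot\,\|_{E^{(\ast)}}=\|\,\cdot\,\|_{E}$. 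Replacing $\varphi(x^{\ast}),\psi(x^{\ast})$ by these monotone factors before transporting through $\omega$ then makes the exact product identity and the two-sided norm estimate hold simultaneously, which completes the explicit factorization for $G^{(\ast)}=E^{(\ast)}\odot F^{(\ast)}$.

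I expect the delicate point to be verifying that the monotonization can be carried out while preserving the \emph{exact} product $x^{\ast}=(\text{factor})\cdot(\text{factor})$ rather than merely a domination, since the argument of Theorem \ref{CLcommutstar}$(ii)$ is stated for the domination-based Calder\'on--Lozanovski{\u\i} construction. I would handle this through the explicit Calder\'on--Lozanovski{\u\i} factorization of \cite{KLM14} together with the identification $E\odot F=(E^{1/2}F^{1/2})^{(1/2)}$, transporting the Calder\'on factorization of $(x^{\ast})^{2}\in E^{1/2}F^{1/2}$ (with its nonincreasing factors obtained via $D_{2}H_{2r}^{\ast}$) back to an exact product factorization of $x^{\ast}$, and then applying $\omega$ as above.
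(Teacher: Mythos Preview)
Your reduction of the set equality $G^{(\ast)}=E^{(\ast)}\odot F^{(\ast)}$ to Corollary~\ref{ilocz-gw-komut} is correct; the paper instead proves the inclusion $E^{(\ast)}\odot F^{(\ast)}\hookrightarrow G^{(\ast)}$ directly and obtains the reverse one as a byproduct of the explicit construction, but the content is the same.

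For the explicit factorization you correctly identify the obstacle---the raw factors $\varphi(x^{\ast}),\psi(x^{\ast})$ lie in $E,F$ but not necessarily in $E^{(\ast)},F^{(\ast)}$---yet your fix does not close the gap. The construction borrowed from Theorem~\ref{CLcommutstar}(ii) yields decreasing majorants $u_{0},u_{1}$ with $x^{\ast}\le C\,u_{0}u_{1}$, a \emph{domination}, not an exact product; your final paragraph acknowledges this but only gestures at ``transporting back to an exact product'' via an unspecified result from \cite{KLM14}. The missing concrete step is a quotient trick. The paper sets $\varphi_{1}=\varphi(x^{\ast})^{2}\in E^{(1/2)}$, $\psi_{1}=\psi(x^{\ast})^{2}\in F^{(1/2)}$ and uses identity~(\ref{equalityHardy}) together with two applications of the H\"older--Rogers inequality to obtain
\[
x^{\ast}\le H_{r}(x^{\ast})\le H_{r}H_{r}^{\ast}(x^{\ast})
= H_{r}H_{r}^{\ast}\bigl(\varphi_{1}^{1/2}\psi_{1}^{1/2}\bigr)
\le [H_{r}H_{r}^{\ast}\varphi_{1}]^{1/2}\,[H_{r}H_{r}^{\ast}\psi_{1}]^{1/2}.
\]
One then \emph{defines} the first factor as $\varphi_{2}=[H_{r}H_{r}^{\ast}\varphi_{1}]^{1/2}$ (decreasing, and in $E$ by the boundedness hypotheses on $E^{(1/2)}$) and the second as the exact quotient $\psi_{2}=x^{\ast}/\varphi_{2}$, which is dominated by the decreasing function $[H_{r}H_{r}^{\ast}\psi_{1}]^{1/2}\in F$; hence $\psi_{2}^{\ast}\in F$. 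Composing with the rank function $r_{x}$ (this plays the role of your Ryff map $\omega$, and the verification that $x^{\ast}(\infty)=0$ is carried out separately) gives the explicit factorization of $|x|$. Your $D_{2}H_{2r}^{\ast}$ route can be made to work in the same way, but only once this quotient step is inserted; the appeal to an abstract Calder\'on--Lozanovski\u{\i} factorization does not by itself produce simultaneously exact and monotone factors.
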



\begin{remark}
It is easy to prove that the following conditions are equivalent: $\left(
a\right) $ the operator $H_{r}$ is bounded on $E^{(1/2)},$ $\left( b\right) $
the operator $H$ is bounded on $E^{(1/2r)},$ $\left( c\right) $ the operator 
$H_{2r}$ is bounded on $E.$
\end{remark}

\begin{proof}[Proof of Theorem \ref{Th-explicit}]  Note that, by the
assumption on the operator $H_{r}$ and Remark \ref{Hr}, the condition (\ref%
{dil}) is satisfied for the spaces $E,F$. First, we will prove the inclusion 
$E^{(\ast )}\odot F^{(\ast )}\overset{C}{\hookrightarrow }G^{(\ast )}$,
where $C=A_{E}A_{F},$ where $A_{E},A_{F}$ are the best constants in (\ref%
{dil}). Suppose $x\in E^{(\ast )},y\in F^{(\ast )}$. Then $x^{\ast }\in
E,y^{\ast }\in F$ and by the assumption, $u=x^{\ast }\,y^{\ast }\in G$. We
need to show that $x\,y\in G^{(\ast )}$ or equivalently $(x\,y)^{\ast }\in G$%
. But 
\begin{equation*}
(x\,y)^{\ast }(t)\leq x^{\ast }(t/2)\,y^{\ast }(t/2)=D_{2}x^{\ast
}(t)D_{2}y^{\ast }(t).
\end{equation*}%
Moreover, $D_{2}x^{\ast }\in E,\,D_{2}y^{\ast }\in F$, by the assumption
which finishes the proof of the first inclusion.

Now, we want to prove the reverse inclusion $G^{(\ast )}\hookrightarrow
E^{(\ast )}\odot F^{(\ast )}$. Let $x\in G^{(\ast )}$. Then $x^{\ast }\in G$
and, by the assumption that the explicit factorization holds, 
\begin{equation*}
x^{\ast }=\varphi (x^{\ast })\,\psi (x^{\ast }),\varphi (x^{\ast })\in
E,\psi (x^{\ast })\in F\text{ with }\Vert x^{\ast }\Vert _{G}\approx \Vert
\varphi (x^{\ast })\Vert _{E}\Vert \psi (x^{\ast })\Vert _{F}.
\end{equation*}
For $\varphi _{1}(x)=\varphi (x^{\ast })^{2}$ and $\psi _{1}(x)=\psi
(x^{\ast })^{2}$ we have 
\begin{equation}
x^{\ast }=\varphi _{1}(x)^{1/2}\,\psi _{1}(x)^{1/2}  \label{rozklad-x*}
\end{equation}%
and, applying twice the H\"{o}lder--Rogers inequality and the equality (\ref%
{equalityHardy}), we get 
\begin{eqnarray*}
x^{\ast } &\leq &H_{r}(x^{\ast })\leq \left[ H_{r}(x^{\ast
})^{r}+H_{r}^{\ast }(x^{\ast })^{r}\right] ^{1/r}=H_{r}H_{r}^{\ast }(x^{\ast
})=H_{r}H_{r}^{\ast }\left( \varphi _{1}(x)^{1/2}\,\psi _{1}(x)^{1/2}\right)
\\
&\leq &H_{r}\{[H_{r}^{\ast }(\varphi _{1}(x))]^{1/2}[H_{r}^{\ast }(\psi
_{1}(x))]^{1/2}\}\leq \lbrack H_{r}H_{r}^{\ast }(\varphi
_{1}(x))]^{1/2}[H_{r}H_{r}^{\ast }(\psi _{1}(x))]^{1/2}.
\end{eqnarray*}%
Define 
\begin{equation*}
\varphi _{2}(x)=[H_{r}\,H_{r}^{\ast }(\varphi _{1}(x))]^{1/2}\ \mathrm{and}\
\psi _{2}(x)=\frac{x^{\ast }}{\varphi _{2}(x)}.
\end{equation*}%
Clearly, $x^{\ast }=\varphi _{2}(x)\,\psi _{2}(x)$. Denote $C_{0}=\left\Vert
H_{r}\right\Vert _{E^{(1/2)}\rightarrow E^{(1/2)}}$, $C_{1}=\left\Vert
H_{r}\right\Vert _{F^{(1/2)}\rightarrow F^{(1/2)}}$, $D_{0}=\left\Vert
H_{r}^{\ast }\right\Vert _{E^{(1/2)}\rightarrow E^{(1/2)}}$ and $%
D_{1}=\left\Vert H_{r}^{\ast }\right\Vert _{F^{(1/2)}\rightarrow F^{(1/2)}}$%
. Note that $\varphi _{1}(x)\in E^{(1/2)},\psi _{1}(x)\in F^{(1/2)}$. Then $%
\varphi _{2}(x)\in E$ because 
\begin{eqnarray*}
\left\Vert \varphi _{2}(x)\right\Vert _{E} &=&\left\Vert [H_{r}H_{r}^{\ast
}(\varphi _{1}(x))]^{1/2}\right\Vert _{E}=\left\Vert H_{r}H_{r}^{\ast
}(\varphi _{1}(x))\right\Vert _{E^{(1/2)}}^{1/2} \\
&\leq &C_{0}^{1/2}\,D_{0}^{1/2}\left\Vert \varphi _{1}(x)\right\Vert
_{E^{(1/2)}}^{1/2}=C_{0}^{1/2}\,D_{0}^{1/2}\left\Vert \varphi (x^{\ast
})\right\Vert _{E}.
\end{eqnarray*}%
Since similarly we have $\psi _{2}(x)\leq \lbrack H_{r}H_{r}^{\ast }(\psi
_{1}(x))]^{1/2}\in F$ so $x^{\ast }\in E\odot F$.

\noindent Recall that the rank function $r_{x}$ of the function $x$ is
defined by the formula 
\begin{equation*}
r_{x}(t)=m \{ s:\left|x(s)\right| > \left|x(t)\right| \ or \
\left|x(s)\right| = \left|x(t)\right| \ \mathrm{and} \ s \leq t \}.
\end{equation*}
It is known that $r_{x}$ preserves measure and $\left|x\right|=x^{*}\circ
r_{x} $ a.e. provided $x^{*}(\infty)=0$ (see Proposition 2 and 3 in \cite%
{Ryff70}).

Since $\varphi _{2}(x)$ is decreasing it follows that $(\varphi _{2}(x)\circ
(r_{x}))^{\ast }=(\varphi _{2}(x))^{\ast }=\varphi _{2}(x)\in E$, whence $%
\varphi _{2}(x)\circ (r_{x})\in E^{(\ast )}$. Similarly, we have $\psi
_{2}(x)\circ (r_{x})\leq \lbrack H_{r}H_{r}^{\ast }(\psi
_{1}(x))]^{1/2}\circ (r_{x})\in F^{(\ast )}$. Thus, $x\in E^{(\ast )}\odot
F^{(\ast )}$ and so $G^{(\ast )}\hookrightarrow E^{(\ast )}\odot F^{(\ast )}$%
. Moreover, the explicit factorization for $|x|$ is given by the formula 
\begin{equation*}
\varphi _{3}(x)=[H_{r}H_{r}^{\ast }(\varphi _{1}(x))]^{1/2}\circ (r_{x})~~%
\mathrm{and}~~\psi _{3}(x)=\frac{|x|}{[H_{r}H_{r}^{\ast }(\varphi
_{1}(x))]^{1/2}\circ (r_{x})},
\end{equation*}%
where $x^{\ast }=\varphi (x^{\ast })\,\psi (x^{\ast })$ is the respective
explicit factorization of $x^{\ast }$ in $E\odot F$, $\varphi
_{1}(x)=\varphi (x^{\ast })^{2}$ and $\psi _{1}(x)=\psi (x^{\ast })^{2}$.

We need only to prove that $x^{\ast }(\infty )=0$ if $I=(0,\infty )$.
Suppose $x^{\ast }(\infty )=a>0$, then, by equality (\ref{rozklad-x*}), $%
\varphi _{1}(x)^{1/2}\psi _{1}(x)^{1/2}\geq a$ for almost all $t>0$ and
considering the sets $A=\{t>0\colon \varphi (x^{\ast })\geq \sqrt{a}%
\},~B=\{t>0\colon \psi (x^{\ast })\geq \sqrt{a}\}$ we obtain $A\cup
B=(0,\infty )$ up to a set of measure zero. Then 
\begin{equation*}
H_{r}^{\ast }(\varphi _{1}(x))(t)=\Big(\int_{t}^{\infty }\frac{\varphi
_{1}(x)(s)^{r}}{s}\,ds\Big)^{1/r}\geq \Big(\int_{A\cap (t,\infty )}\frac{%
a^{r/2}}{s}\,ds\Big)^{1/r}
\end{equation*}%
and 
\begin{equation*}
H_{r}^{\ast }(\psi _{1}(x))(t)=\Big(\int_{t}^{\infty }\frac{\psi
_{1}(x)(s)^{r}}{s}\,ds\Big)^{1/r}\geq \Big(\int_{B\cap (t,\infty )}\frac{%
a^{r/2}}{s}\,ds\Big)^{1/r},
\end{equation*}%
which means $H_{r}^{\ast }(\varphi _{1}(x))(t)+H_{r}^{\ast }(\psi
_{1}(x))(t)=+\infty $ for all $t>0$. Since 
\begin{equation*}
(0,\infty )=\{t>0\colon H_{r}^{\ast }(\varphi _{1}(x))(t)=\infty \}\cup
\{t>0\colon H_{r}^{\ast }(\psi _{1}(x))(t)=\infty \}
\end{equation*}%
(maybe except a set of measure zero) it follows that $H_{r}^{\ast }(\varphi
_{1}(x))\notin E^{(1/2)}$ or $H_{r}^{\ast }(\psi _{1}(x))\notin F^{(1/2)}$,
which is a contradiction because $\varphi _{1}(x)\in E^{(1/2)}$ and $\psi
_{1}(x)\in F^{(1/2)}$. \end{proof}


\section{On factorization of symmetrized spaces}

The classical factorization theorem of Lozanovski{\u{\i}} states that for
any Banach ideal space $E$ the space $L^{1}$ has a factorization $%
L^{1}=E\odot E^{\prime }$. The natural generalization has been investigated
in \cite{KLM14} (see also \cite{Ni85}, \cite{Sc1}): for Banach ideal spaces $%
E$ and $F$, when it is possible to factorize $F$ through $E$, i.e., when the
equality 
\begin{equation*}
F=E\odot M(E,F)\,\text{holds }?
\end{equation*}%
Of course, such a natural generalization is not true without additional
assumptions on the spaces, as we can see on examples presented in \cite[%
Section 6]{KLM14}. In particular: $L^{2,1}\odot M(L^{2,1},L^{2})\equiv
L^{2,1}\odot L^{\infty }\equiv L^{2,1}\not\hookrightarrow L^{2}$ or $%
L^{2}\odot M(L^{2},L^{2,\infty })\equiv L^{2}\odot L^{\infty }\equiv
L^{2}\not\hookrightarrow L^{2,\infty }$ (see \cite[Example 2]{KLM14}).

It is easy to see that if for Banach ideal spaces $E$ and $F$ the space $F$
has a factorization through $E$, i.e., $F=E\odot M(E,F)$, then the
corresponding weighted factorization holds, that is 
\begin{equation}
F(w_{1})=E(w_{0})\odot M(E(w_{0}),F(w_{1})).  \label{weight-fac}
\end{equation}%
In fact, applying Example \ref{Ex5}(b) and property (x) from \cite{KLM13} we
get 
\begin{eqnarray*}
E(w_{0})\odot M(E(w_{0}),F(w_{1})) &=&E(w_{0})\odot M(E,F)(w_{1}/w_{0}) \\
&=&(E\odot M(E,F))(w_{1})=F(w_{1}).
\end{eqnarray*}

Applying Theorem \ref{multip-komut} and Corollary \ref{ilocz-gw-komut} we
get a result on factorization of symmetrizations.


\begin{corollary}
\label{E-E*} Suppose that assumptions from the Theorem \ref{multip-komut}
for the spaces $E,F$ and the assumptions from Corollary \ref{ilocz-gw-komut}
for the spaces $E,M\left( E,F\right) $ are satisfied. If $F$ factorizes
through $E$, i.e., $F=E\odot M(E,F)$, then the symmetrization $F^{(\ast )}$
factorizes through the symmetrization $E^{(\ast )}$, that is, 
\begin{equation}
F^{(\ast )}=E^{(\ast )}\odot M(E^{(\ast )},F^{(\ast )}).  \label{facsym}
\end{equation}
\end{corollary}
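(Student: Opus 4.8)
The plan is to obtain the result by simply composing the two commutation theorems already proved, since the hypotheses of the corollary were tailored precisely so that both can be applied to the relevant pairs of spaces. No genuinely new computation is required: the whole content is a short chain of equalities, each of which holds up to equivalence of quasi-norms.

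First I would start from the assumed factorization $F=E\odot M(E,F)$ and apply the symmetrization operation to both sides. Since this is an equality of spaces with equivalent quasi-norms, and symmetrization preserves continuous embeddings (see (\ref{equality4})), we get $F^{(\ast)}=(E\odot M(E,F))^{(\ast)}$ with equivalent quasi-norms. Next I would commute the symmetrization with the pointwise product. This is exactly Corollary \ref{ilocz-gw-komut} applied to the pair $E$ and $M(E,F)$: the assumption that ``the assumptions from Corollary \ref{ilocz-gw-komut} for the spaces $E,M(E,F)$ are satisfied'' guarantees $E^{(\ast)}\neq\{0\}$, $M(E,F)^{(\ast)}\neq\{0\}$, that $D_{2}$ is bounded on $E^{\downarrow}$ and $M(E,F)^{\downarrow}$, and that $H_{r}^{\ast}$ is bounded on $E$ and $M(E,F)$ for some $r>0$ --- precisely what is needed not merely for the inclusion (\ref{inclusion8}) but for the full equality $(E\odot M(E,F))^{(\ast)}=E^{(\ast)}\odot M(E,F)^{(\ast)}$ of (\ref{inlusion9}).

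Finally I would commute the symmetrization with the multiplier space. By Theorem \ref{multip-komut} applied to the pair $E,F$ (whose hypotheses are assumed), we have $M(E,F)^{(\ast)}=M(E^{(\ast)},F^{(\ast)})$. Substituting this into the previous step and assembling the chain,
\[
F^{(\ast)}=(E\odot M(E,F))^{(\ast)}=E^{(\ast)}\odot M(E,F)^{(\ast)}=E^{(\ast)}\odot M(E^{(\ast)},F^{(\ast)}),
\]
which is exactly the desired factorization (\ref{facsym}).

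The only point demanding care --- and the closest thing to an obstacle --- is the bookkeeping of hypotheses: one must verify that the two cited results are invoked for the correct pairs, namely Theorem \ref{multip-komut} for $(E,F)$ and Corollary \ref{ilocz-gw-komut} for $(E,M(E,F))$, and that every equality above is read as equality of the underlying sets together with a two-sided quasi-norm estimate. Because each cited equality already carries such two-sided bounds, the constants simply compound multiplicatively and remain finite, so the composite equality $F^{(\ast)}=E^{(\ast)}\odot M(E^{(\ast)},F^{(\ast)})$ holds with equivalent quasi-norms, completing the proof.
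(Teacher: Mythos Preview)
Your proof is correct and is essentially the paper's own argument: the paper runs the same three-step chain $E^{(\ast)}\odot M(E^{(\ast)},F^{(\ast)}) = E^{(\ast)}\odot M(E,F)^{(\ast)} = [E\odot M(E,F)]^{(\ast)} = F^{(\ast)}$, invoking Theorem~\ref{multip-komut} and Corollary~\ref{ilocz-gw-komut} in the opposite order but to the same effect.
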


\begin{proof}
We have
\begin{eqnarray*}
E^{(*)} \odot M(E^{(*)}, F^{(*)}) 
&=& 
E^{(*)} \odot M(E, F)^{(*)} \hspace{1cm}  [{\rm by ~ Theorem ~ \ref{multip-komut}}] \\
&=& 
[E \odot M(E, F)]^{(*)} \hspace{1.2cm} [{\rm by ~ Corollary \ref{ilocz-gw-komut}}] \\
&=&
F^{(*)} \hspace{3.6cm} [{\rm by ~ assumption}].  
\end{eqnarray*} 
\end{proof}

Consequently, from (\ref{weight-fac}) and Corollary \ref{E-E*} we can get
that, under some assumptions on Banach ideal spaces $E, F$, if $F = E \odot
M(E, F)$, then 
\begin{equation*}
E(w_{0})^{(*)} \odot M(E(w_{0})^{(*)},F(w_{1})^{(*)}) = F(w_{1})^{(*)}.
\end{equation*}

Then the factorization of classical spaces of the type $E^{(*)}$ like
Lorentz and Marcinkiewicz spaces (see \cite[Examples 3 and 4]{KLM14}) comes
also from the known factorization of a respective weighted $L^{p}$-spaces or
just $L^{p}$ spaces.


\section{On multipliers and factorization of Ces\`aro function spaces}

We will need result on the cancellation property for the product spaces and
the factorization property of the K\"{o}the dual.


\begin{lemma}
\label{Lemma2} Let $E, F, G$ be quasi-Banach ideal spaces with the Fatou
property. Assume that for some $p > 0$ all three $p$-convexifications $%
E^{(p)}, F^{(p)}, G^{(p)}$ are normable.

\begin{itemize}
\item[$(i)$] If $E \odot F = E \odot G$ then $F = G$.

\item[$(ii)$] Suppose additionally that $E,F$ are Banach spaces. If $F$
factorizes through $E$, i.e., $F=E\odot M(E,F)$, then the K\"{o}the dual $%
E^{\prime }$ factorizes through the K\"{o}the dual $F^{\prime }$, that is, $%
E^{\prime }=F^{\prime }\odot M(F^{\prime },E^{\prime })$.
\end{itemize}
\end{lemma}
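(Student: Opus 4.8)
The plan is to prove (i) by reducing to the Banach setting via $p$-convexification and then invoking the cancellation law from \cite[Theorem 4]{KLM14}, and to prove (ii) by a short factorization computation that cancels the factor $E$ using part (i). First I would record that $p$-convexification commutes with the pointwise product, namely $(E\odot F)^{(p)}=E^{(p)}\odot F^{(p)}$, which follows from $E\odot F=(E^{1/2}F^{1/2})^{(1/2)}$ together with the analogous identity for Calder\'{o}n products. Applying this to the hypothesis $E\odot F=E\odot G$ gives
\[
E^{(p)}\odot F^{(p)}=E^{(p)}\odot G^{(p)}.
\]
Since $E^{(p)},F^{(p)},G^{(p)}$ are normable and inherit the Fatou property from $E,F,G$, after an equivalent renorming they are Banach ideal spaces with the Fatou property. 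On such spaces the cancellation law \cite[Theorem 4]{KLM14}, used with $L^{\infty}$ and the trivial identities $L^{\infty}\odot X=X$, $M(L^{\infty},Y)=Y$, yields $M(X,X\odot Y)=M(L^{\infty}\odot X,Y\odot X)=M(L^{\infty},Y)=Y$. Hence $F^{(p)}=M(E^{(p)},E^{(p)}\odot F^{(p)})=M(E^{(p)},E^{(p)}\odot G^{(p)})=G^{(p)}$, and de-convexifying (using $(Z^{(p)})^{(1/p)}=Z$) gives $F=G$.

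For (ii), write $M=M(E,F)$. The first step is the duality of multipliers $M(E,F)=M(F^{\prime},E^{\prime})$, which follows from $M(E,F)=(E\odot F^{\prime})^{\prime}$ (this is $\left(E\odot F^{\prime}\right)^{\prime}=M(E,F^{\prime\prime})$ of \cite[Corollary 3]{KLM14} combined with $F^{\prime\prime}=F$ from the Fatou property), the identity $(F^{\prime}\odot E)^{\prime}=M(F^{\prime},E^{\prime})$, and the commutativity of $\odot$. Next, using associativity and commutativity of the product, the hypothesis $F=E\odot M$, and the Lozanovski\u{\i} factorization $L^{1}=F\odot F^{\prime}=E\odot E^{\prime}$, I would compute
\[
E\odot(F^{\prime}\odot M)=(E\odot M)\odot F^{\prime}=F\odot F^{\prime}=L^{1}=E\odot E^{\prime}.
\]
Now part (i), applied to the triple $(E,\,F^{\prime}\odot M,\,E^{\prime})$, cancels the common left factor $E$ and yields $F^{\prime}\odot M=E^{\prime}$, that is, $E^{\prime}=F^{\prime}\odot M(F^{\prime},E^{\prime})$, which is the assertion.

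The hard part will be the bookkeeping needed to legitimately invoke part (i) in this last step, i.e. verifying its hypotheses for the auxiliary triple \emph{before} we know that $F^{\prime}\odot M$ equals $E^{\prime}$. All three spaces are quasi-Banach ideal spaces with the Fatou property: $E$ and $E^{\prime}$ are Banach with Fatou, while $F^{\prime}$ and $M=(E\odot F^{\prime})^{\prime}$ are K\"{o}the duals, hence Banach with Fatou, and the product $F^{\prime}\odot M$ of two Banach Fatou spaces again has the Fatou property. For the common exponent I would take $p=2$: the convexifications $E^{(2)}$ and $(E^{\prime})^{(2)}$ of Banach spaces are $2$-convex, hence normable, and $(F^{\prime}\odot M)^{(2)}=(F^{\prime})^{1/2}M^{1/2}$ is a Calder\'{o}n product of Banach spaces, hence Banach and in particular normable. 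Once these routine verifications are in place, the factorization algebra itself is immediate, so the only genuine work is checking normability and the Fatou property of the product $F^{\prime}\odot M$, together with recording the two classical identities $M(E,F)=M(F^{\prime},E^{\prime})$ and $L^{1}=E\odot E^{\prime}$.
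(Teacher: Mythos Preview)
Your argument is correct, and for part (ii) it is essentially identical to the paper's: both compute $E\odot(F'\odot M)=F\odot F'=L^1=E\odot E'$ via Lozanovski\u{\i}, cancel $E$ using part (i), and then rewrite $M(E,F)$ as $M(F',E')$. Your verification of the hypotheses for (i) with $p=2$ is exactly the bookkeeping the paper leaves implicit.

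For part (i) you take a genuinely different route at the cancellation step. Both you and the paper first $p$-convexify to reduce to Banach ideal spaces with the Fatou property, obtaining $E^{(p)}\odot F^{(p)}=E^{(p)}\odot G^{(p)}$. From there the paper rewrites each product as a $1/2$-convexification of a Calder\'on product, $E^{(p)}\odot F^{(p)}=\big[(E^{(p)})^{1/2}(F^{(p)})^{1/2}\big]^{(1/2)}$, and then invokes the \emph{uniqueness of the Calder\'on--Lozanovski\u{\i} construction} (Berezhno\u{\i}--Maligranda \cite[Corollary 1]{BM05}, or \cite[Theorem 3.5]{CNS03}) to conclude $F^{(p)}=G^{(p)}$. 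You instead stay at the level of pointwise products and use the multiplier cancellation law \cite[Theorem~4]{KLM14} in the form $M(X,X\odot Y)=M(L^\infty\odot X,Y\odot X)=M(L^\infty,Y)=Y$, whence $F^{(p)}=M(E^{(p)},E^{(p)}\odot F^{(p)})=M(E^{(p)},E^{(p)}\odot G^{(p)})=G^{(p)}$. This is legitimate---indeed the paper itself applies Theorem~4 of \cite{KLM14} in exactly this way in the proof of Theorem~\ref{Lorentz-multi}, Case~A---and it has the advantage of not importing the external uniqueness result from \cite{BM05}; the trade-off is that one must check the hypotheses of \cite[Theorem~4]{KLM14} (Banach, Fatou) for the convexified spaces, which you do.
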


\proof%
(i) Since, by \cite[Theorem 1 (iii)]{KLM14}, $E\odot F=(E^{(p)}\odot
F^{(p)})^{(1/p)}$ and similarly for $E\odot G$ we conclude by the assumption
that 
\begin{equation*}
(E^{(p)}\odot F^{(p)})^{(1/p)}=(E^{(p)}\odot G^{(p)})^{(1/p)}~~\mathrm{and~so%
}~~E^{(p)}\odot F^{(p)}=E^{(p)}\odot G^{(p)}.
\end{equation*}%
For Banach ideal spaces $E,F$, however, we have by \cite[Theorem 1 (iv)]%
{KLM14} that the product space is $1/2$-convexification of the Calder\'{o}n
product: $E\odot F=(E^{1/2}F^{1/2})^{(1/2)}$ and we obtain 
\begin{equation*}
\left[ (E^{(p)})^{1/2}(F^{(p)})^{1/2}\right] ^{(1/2)}=\left[
(E^{(p)})^{1/2}(G^{(p)})^{1/2}\right] ^{(1/2)},
\end{equation*}%
which gives $(E^{(p)})^{1/2}(F^{(p)})^{1/2}=(E^{(p)})^{1/2}(G^{(p)})^{1/2}$
and by uniqueness of Calder\'{o}n--Lo\-za\-nov\-ski{\u{\i}} construction 
\cite[Corollary 1]{BM05} (see also \cite[Theorem 3.5]{CNS03} with a direct
proof) we get $F^{(p)}=G^{(p)}$ or $F=G$.

(ii) If $F=E\odot M(E,F)$, then by the Lozanovski{\u{\i}} factorization
theorem 
\begin{equation}
F^{\prime }\odot M(E,F)\odot E=F^{\prime }\odot F=L^{1}=E^{\prime }\odot E.
\label{Lem2}
\end{equation}%
Observe that, by property (xi) in \cite{KLM13} the space $M(E,F)$ has the
Fatou property, and by Corollary 1 (ii) in \cite{KLM14} the space $F^{\prime
}\odot M(E,F)$ has the Fatou property. Thus, by (i) above, we get $E^{\prime
}=F^{\prime }\odot M(E,F)$ and finally, by property (vii) in \cite{KLM13}
saying that $M(E,F)=M(F^{\prime },E^{\prime })$ we get $E^{\prime
}=F^{\prime }\odot M(F^{\prime },E^{\prime })$.%
\endproof%

Now we disscuss the factorization of Ces\`{a}ro spaces. Recall that for a
Banach ideal space $E$ on $I$ the \textit{Ces\`{a}ro function space} $%
CE=CE(I)$ is defined as 
\begin{equation}
CE=\{f\in L^{0}(I)\colon H\left\vert f\right\vert \in E\}~~\mathrm{%
with~the~norm}~~\Vert f\Vert _{CE}=\Vert H\left\vert f\right\vert \Vert _{E},
\end{equation}%
and the \textit{Tandori function space} $\widetilde{E}=\widetilde{E}(I)$ as 
\begin{equation}
\widetilde{E}=\{f\in L^{0}(I):\widetilde{f}\in E\}~~\mathrm{with~the~norm}%
~~\Vert f\Vert _{\widetilde{E}}=\Vert \widetilde{f}\Vert _{E},  \label{falka}
\end{equation}%
where $H$ is a Hardy operator and $\widetilde{f}(x)=\mathrm{ess}\sup_{t\in
I,\,t\geq x}|f(t)|$ (cf. \cite{AM09}, \cite{LM15}, \cite{LM16}). For
example, if $E=L^{p}(I)$ the respective space $CL^{p}(I)$ is the classical
Ces\`{a}ro function space denoted usually by $Ces_{p}\left( I\right) .$
Similarly, in the sequence case $E=l^{p}$ we have $ces_{p}:=Cl^{p}.$


\begin{theorem}
\label{mult-cesaro} Let $E,F$ be symmetric Banach function spaces on $%
I=(0,\infty )$ (or symmetric Banach sequence spaces) with the Fatou property
such that the operator $H$ is bounded on $E$ and on $F$. Assume that $F$
factorizes through $E$, that is, $F=E\odot M(E,F)$. Then 
\begin{equation}
M(CE,CF)=\widetilde{M(E,F)}.  \label{Thm6}
\end{equation}
\end{theorem}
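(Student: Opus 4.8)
The plan is to derive (\ref{Thm6}) from the arithmetic developed earlier, reducing it to a single hard inclusion and then cancelling. Write $M:=M(E,F)$. Since $E,F$ have the Fatou property and $H$ is continuous, $0\le f_n\uparrow f$ forces $H|f_n|\uparrow H|f|$, so $CE$ and $CF$ are Banach ideal spaces with the Fatou property; in particular $M(CE,CF)=M((CF)',(CE)')$ by the duality property $M(X,Y)=M(Y',X')$. Because $H$ is bounded on $E$ and on $F$, the Cesàro--Tandori duality $(CE)'=\widetilde{E'}$ and $(CF)'=\widetilde{F'}$ holds (cf. \cite{LM15}, \cite{LM16}), whence $M(CE,CF)=M(\widetilde{F'},\widetilde{E'})$.

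The easy half is $\widetilde{M}\hookrightarrow M(CE,CF)$. I would prove it through the elementary inclusion $\widetilde{M(X,Y)}\hookrightarrow M(\widetilde{X},\widetilde{Y})$, valid for any pair $X,Y$: if $\widetilde{u}\in M(X,Y)$ and $\widetilde{v}\in X$ then $|uv|\le\widetilde{u}\,\widetilde{v}$ with $\widetilde{u}\,\widetilde{v}$ nonincreasing, so $\widetilde{uv}\le\widetilde{u}\,\widetilde{v}\in Y$ and $\|uv\|_{\widetilde Y}\le\|\widetilde u\|_{M(X,Y)}\|\widetilde v\|_X$. Taking $X=F'$, $Y=E'$ and using $M(F',E')=M(E,F)=M$ gives $\widetilde{M}=\widetilde{M(F',E')}\hookrightarrow M(\widetilde{F'},\widetilde{E'})=M(CE,CF)$. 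By the very definition of the multiplier space this is the same as the inclusion $CE\odot\widetilde{M}\hookrightarrow CF$.

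The main obstacle is the reverse inclusion $CF\hookrightarrow CE\odot\widetilde{M}$, i.e. lifting the hypothesis $F=E\odot M$ through the Cesàro construction to the factorization $CF=CE\odot\widetilde{M}$. Here one is handed only that the average $H|f|$ of $f\in CF$ lies in $F=E\odot M$, hence factors as $H|f|=u\,v$ with $u\in E$, $v\in M$, and must manufacture from this a factorization of $f$ itself with a $CE$-factor and a $\widetilde{M}$-factor. I expect to carry this out exactly as in the proof of Theorem \ref{Th-explicit}: pass to least nonincreasing majorants (so that the $M$-factor may be chosen nonincreasing and thus becomes a legitimate $\widetilde{M}$-factor), and transport the decomposition back to $f$ along its rank function $r_f$, using the boundedness of $H$ (equivalently of $H_r$ on the $\tfrac12$-convexifications) to keep both factors in the required spaces and to guarantee $x^{\ast}(\infty)=0$.

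Finally I would assemble the ingredients by cancellation. The easy inclusion gives $CE\odot\widetilde{M}\hookrightarrow CE\odot M(CE,CF)\hookrightarrow CF$, while the hard inclusion gives $CF\hookrightarrow CE\odot\widetilde{M}$; comparing these forces $CE\odot M(CE,CF)=CE\odot\widetilde{M}=CF$. All of $CE$, $\widetilde{M}$ and $M(CE,CF)$ are Banach spaces with the Fatou property (their $1$-convexifications being trivially normable), so Lemma \ref{Lemma2}(i) applies and we may cancel the common factor $CE$ to conclude $M(CE,CF)=\widetilde{M}=\widetilde{M(E,F)}$, which is (\ref{Thm6}).
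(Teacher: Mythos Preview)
Your easy inclusion $\widetilde{M}\hookrightarrow M(CE,CF)$ via $\widetilde{M(X,Y)}\hookrightarrow M(\widetilde X,\widetilde Y)$ is correct, and the final cancellation via Lemma~\ref{Lemma2}(i) is a legitimate way to finish once both inclusions are in hand. The gap is in the ``hard inclusion'' $CF\hookrightarrow CE\odot\widetilde M$. Your sketch appeals to the method of Theorem~\ref{Th-explicit}, but that argument is tied to the symmetrization setting: one starts from $x^{\ast}$, applies $H_rH_r^{\ast}$ to produce \emph{nonincreasing} factors, and then transports back via the rank function $r_x$. Here you start from $H|f|$, not from $f^{\ast}$, so there is nothing for $r_f$ to undo; and ``passing to the least nonincreasing majorant'' of the $M$-factor $v$ does not keep you inside $M$: $v\in M$ by no means implies $\widetilde v\in M$, even for symmetric $M$. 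To salvage this you would need $H_r^{\ast}$ (or $H^{\ast}$) bounded on $M=M(E,F)$ or on a convexification thereof, which the hypotheses do not provide (only $H$ on $E$ and $F$ is assumed). In short, the mechanism that manufactures a nonincreasing $\widetilde M$-factor with controlled norm is missing.

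The paper avoids this difficulty entirely by working on the dual side and never attempting to factor an individual $f\in CF$. Writing $G=M(E,F)$, it first uses the commutation of the Tandori construction with Calder\'on products for \emph{symmetric} spaces (\cite[Theorem~4]{LM16}) together with $E\odot F=(E^{1/2}F^{1/2})^{(1/2)}$ to get $\widetilde{F'}\odot\widetilde G=\widetilde{F'\odot G}$; then from $F=E\odot G$ and Lozanovski{\u\i} plus Lemma~\ref{Lemma2}(i) it obtains $F'\odot G=E'$, hence $\widetilde{F'}\odot\widetilde G=\widetilde{E'}=(CE)'$. Multiplying by $CE$ gives $CE\odot\widetilde{F'}\odot\widetilde G=L^1=(\widetilde G)'\odot\widetilde G$, cancel $\widetilde G$ to get $CE\odot\widetilde{F'}=(\widetilde G)'$, and finally $M(CE,CF)=\bigl(CE\odot(CF)'\bigr)'=\bigl(CE\odot\widetilde{F'}\bigr)'=\widetilde G$. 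Thus the decisive ingredient you are missing is the identity $\widetilde{F'}\odot\widetilde G=\widetilde{F'\odot G}$ (for symmetric $F',G$), which replaces your attempted explicit factorization and is precisely what makes the nonincreasing majorant harmless.
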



\begin{proof} Let $E,F$ be symmetric Banach function spaces on $I=(0,\infty )
$. Note that $CE\neq \{0\},CF\neq \{0\}$ and $(CE)^{\prime }=\widetilde{%
E^{\prime }},(CF)^{\prime }=\widetilde{F^{\prime }}$ (see \cite[Theorem 1 and 2]%
{LM15}). Denote $G=M(E,F)$. The space of multipliers $G$ is a symmetric space
(see \cite[Theorem 2.2(i)]{KLM13}) and $G=M(F^{\prime },E^{\prime })$ (cf. \cite[%
property (vii)]{KLM13}).

First, we show that 
\begin{equation*}
\widetilde{F^{\prime }}\odot \widetilde{G}=\widetilde{F^{\prime }\odot G}.
\end{equation*}%
In fact, applying Theorem 1(iv) from \cite{KLM14}, Theorem 4 from \cite{LM16}
(since $F^{\prime }$ and $G$ are symmetric), the equality $\widetilde{E^{(p)}}=(%
\widetilde{E})^{(p)}$ and again Theorem 1(iv) from \cite{KLM14} we obtain 
\begin{equation*}
\widetilde{F^{\prime }}\odot \widetilde{G}=[(\widetilde{F^{\prime }})^{1/2}%
\widetilde{G}^{1/2}]^{(1/2)}=[\widetilde{{(F^{\prime })}^{1/2}G^{1/2}}%
]^{(1/2)}=\widetilde{F^{\prime }\odot G}.
\end{equation*}%
Second, since $F=E\odot M(E,F)$ it follows by the Lozanovski{\u{\i}}
factorization theorem that 
\begin{equation*}
\lbrack F^{\prime }\odot M(E,F)]\odot E=F^{\prime }\odot \lbrack M(E,F)\odot
E]=F^{\prime }\odot F=L^{1}=E^{\prime }\odot E,
\end{equation*}%
and by Lemma \ref{Lemma2}(i) we get $E^{\prime }=F^{\prime }\odot M(E,F)$.
Thus, 
\begin{equation*}
\widetilde{F^{\prime }}\odot \widetilde{G}=\widetilde{F^{\prime }\odot G}=%
\widetilde{E^{\prime }}.
\end{equation*}%
Using the last equality, Theorem 2 from \cite{LM15} on the K\"{o}the duality
of abstract Ces\`{a}ro spaces $(CE)^{\prime }=\widetilde{(E^{\prime })}$ and
the Lozanovski{\u{\i}} factorization theorem we obtain 
\begin{equation}
CE\odot \widetilde{F^{\prime }}\odot \widetilde{G}=CE\odot \widetilde{%
E^{\prime }}=CE\odot (CE)^{\prime }=L^{1}.  \label{Thm6a}
\end{equation}%
Taking $L^{1}=(\widetilde{G})^{\prime }\odot \widetilde{G}$ in (\ref{Thm6a})
and applying Lemma \ref{Lemma2}(i), we obtain $CE\odot \widetilde{F^{\prime }%
}=(\widetilde{G})^{\prime }$, whence 
\begin{equation}
(CE\odot \widetilde{F^{\prime }})^{\prime }=(\widetilde{G})^{\prime \prime }=%
\widetilde{G}=\widetilde{M(E,F)}.  \label{Thm6b}
\end{equation}%
Applying Theorem 4 from \cite{KLM14}, Theorem 2 from \cite{LM15}, the
Lozanovski{\u{\i}} factorization theorem, the K\"{o}the duality $%
(CF)^{\prime }=\widetilde{(F^{\prime })}$ and the identification (\ref{Thm6b}%
) we obtain 
\begin{eqnarray*}
M(CE,CF) &=&M[CE\odot (CF)^{\prime },CF\odot (CF)^{\prime }]=M(CE\odot
(CF)^{\prime },L^{1}) \\
&=&[CE\odot (CF)^{\prime }]^{\prime }=(CE\odot \widetilde{F^{\prime }}%
)^{\prime }=\widetilde{M(E,F)}.
\end{eqnarray*}

The proof is the same for symmetric Banach sequence spaces, applying Theorem
6 instead of Theorem 2 from \cite{LM15}. \end{proof}

\begin{remark}
Note that the above theorem for Banach function spaces on $I=(0,\infty )$ is
also true with some different set of assumptions. Namely, if $E,F$ are
Banach ideal spaces on $I=(0,\infty )$ with the Fatou property such that
both the operators $H,H^{\ast }$ and $D_{\tau }$ are bounded on $E$ and on $%
F,$ for some $\tau \in \left( 0,1\right) ,$ then it is enough to apply in
the proof Theorem 3 instead of Theorem 2 from {\rm{\cite{LM15}}}. 
\end{remark}

\begin{example}
\label{Ces_p}\emph{{{\ Let $1<q\leq p\leq \infty $. Set $\frac{1}{r}=\frac{1%
}{q}-\frac{1}{p}.$ Then 
\begin{equation}
M(Ces_{p}\left( I\right) ,Ces_{q}\left( I\right) )=M(\widetilde{L^{p}\left(
I\right) },\widetilde{L^{q}\left( I\right) })=\widetilde{L^{r}\left(
I\right) }\text{ with }I=(0,\infty )  \label{Cesp}
\end{equation}%
and 
\begin{equation*}
M(ces_{p},ces_{q})=\widetilde{l^{r}},
\end{equation*}%
where $\widetilde{l^{r}}=\{x=(x_{n})\colon (\sum\limits_{n=1}^{\infty
}\sup\limits_{k\geq n}|x_{k}|^{r})^{1/r}<\infty \}.$ } } }
\end{example}

\begin{proof}
Since for $1< q\leq p\leq \infty $ we have $M(L^{p}(I),L^{q}(I))\equiv
L^{r}(I)$ (cf. \cite[Proposition 3]{MP89}) and $L^{p}(I)\odot
L^{r}(I)=L^{q}(I)$ (cf. \cite[p. 1373]{CS17} and \cite[Example 1(a)]{KLM14}%
), where $\frac{1}{r}=\frac{1}{q}-\frac{1}{p}$, then using Theorem \ref%
{mult-cesaro} with necessary restrictions on $p,q$ we obtain 
\begin{equation*}
M(Ces_{p}(I),Ces_{q}(I))=\widetilde{M[L^{p}(I),L^{q}(I)]}=\widetilde{L^{r}(I)%
}.
\end{equation*}%
Also for $I=(0,\infty )$ we have 
\begin{equation*}
M(\widetilde{L^{p}},\widetilde{L^{q}})=M[(\widetilde{L^{q}})^{\prime },(%
\widetilde{L^{p}})^{\prime }]=M(Ces_{q^{\prime }},Ces_{p^{\prime }})=%
\widetilde{L^{r}},
\end{equation*}%
since $1/p^{\prime }-1/q^{\prime }=1/q-1/p=1/r$. For the sequence case the
proof is the same.
\end{proof}

Note that C. Bennett proved the above result $M(ces_{p},ces_{q})=\widetilde{%
l^{r}}$ for Ces\`{a}ro sequence spaces in \cite{Be96}.

\begin{problem}
Prove an analogous result to Theorem \ref{mult-cesaro} for $I=\left(
0,1\right) .$
\end{problem}

We need to assume that $E,F$ are symmetric Banach function spaces with the
Fatou property such that the operators $H$ and $H^{\ast }$ are bounded on $E$
and on $F$. Then, by Corollary 13 from \cite{LM15} on the K\"{o}the duality
of abstract Ces\`{a}ro spaces on $I=(0,1),$ we have $(CE)^{\prime }=%
\widetilde{(E^{\prime }(1/w))}$ for $w(t)=1-t,t\in I$. Suppose we try to
prove this result similarly as for $I=\left( 0,\infty \right) .$
Unfortunately, we are not able to apply Theorem 4 from \cite{LM16} because
the respective space $F^{\prime }(1/w)$ is not symmetric. Thus, for Theorem
4 from \cite{LM16}, we need to assume that $H,H^{\ast }$ are bounded on $%
F^{\prime }(1/w)$ and $M\left( E,F\right) $ which do not seem to be
reasonable.

\bigskip

Note that we can not apply Theorem \ref{mult-cesaro} in the case when $%
M(E,F)=L^{\infty }$ with $E\neq F$ or $M(E,F)=\left\{ 0\right\} ,$ because
the factorization assumption is not satisfied. However, for $1<p\leq
q<\infty $ we have $M(l^{p},l^{q})=l^{\infty }$ and $M(L^{p}\left( I\right)
,L^{q}\left( I\right) )=\left\{ 0\right\} .$ Consequently, it is natural to
find descriptions of\ $M(CE,CF)$ in this cases. Note that C. Bennett \cite%
{Be96} proved that if $1<p\leq q<\infty $, then $M(ces_{p},ces_{q})=%
\{x=(x_{n})\colon \sup\limits_{n\in {N}}n^{1/q-1/p}|x_{n}|<\infty \}$.


\begin{corollary}
\label{faktor-Cesaro}Suppose the assumptions of Theorem \ref{mult-cesaro}
are satisfied. Then Ces\`{a}ro function (sequence) space $CF$ can be
factorized by another Ces\`{a}ro function (sequence) space $CE$, that is, 
\begin{equation*}
CF=CE\odot M(CE,CF).
\end{equation*}
\end{corollary}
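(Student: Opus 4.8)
The plan is to combine the multiplier identification from Theorem~\ref{mult-cesaro} with the Lozanovski{\u{\i}} factorization theorem and the cancellation property in Lemma~\ref{Lemma2}(i). Write $G=M(E,F)$, so that Theorem~\ref{mult-cesaro} gives $M(CE,CF)=\widetilde{G}$. The key input is the intermediate equality $CE\odot\widetilde{F'}=(\widetilde{G})'$ that is already obtained inside the proof of Theorem~\ref{mult-cesaro}; since $(CF)'=\widetilde{F'}$ and $\widetilde{G}=M(CE,CF)$, it can be read as $CE\odot(CF)'=\big(M(CE,CF)\big)'$.

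First I would apply the Lozanovski{\u{\i}} factorization theorem to the multiplier space itself, namely $M(CE,CF)\odot\big(M(CE,CF)\big)'=L^{1}$, which is legitimate because $M(CE,CF)=\widetilde{G}$ is a Banach ideal space with the Fatou property. Substituting the relation above turns this into
\[
\big[CE\odot M(CE,CF)\big]\odot(CF)'=M(CE,CF)\odot CE\odot(CF)'=L^{1}.
\]
On the other hand, the Lozanovski{\u{\i}} factorization for $CF$ gives $CF\odot(CF)'=L^{1}$. Equating the two factorizations of $L^{1}$ yields
\[
\big[CE\odot M(CE,CF)\big]\odot(CF)'=CF\odot(CF)'.
\]

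Then I would cancel the common factor $(CF)'$ by invoking Lemma~\ref{Lemma2}(i), which delivers exactly $CE\odot M(CE,CF)=CF$, the asserted factorization; the same chain of equalities works verbatim in the sequence case. The only genuine obstacle is verifying the hypotheses of the cancellation lemma: the three spaces $(CF)'$, $CE\odot M(CE,CF)$ and $CF$ must have the Fatou property and, for one common $p>0$, normable $p$-convexifications. The Fatou property of $(CF)'$ is automatic for a K\"{o}the dual, that of $CF$ follows from the Fatou property of $F$, and that of $CE\odot M(CE,CF)$ follows from Corollary~1(ii) in \cite{KLM14}. For normability I would take $p=2$: for a Banach ideal space $X$ the convexification $X^{(2)}$ is again a Banach space, and for Banach spaces $A,B$ one has $(A\odot B)^{(2)}=A^{1/2}B^{1/2}$, a Calder\'{o}n product of Banach spaces and hence normable; thus all three $2$-convexifications are normable and Lemma~\ref{Lemma2}(i) applies.
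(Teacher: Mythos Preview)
Your argument is correct and follows essentially the same route as the paper: both reach the identity $\big[CE\odot M(CE,CF)\big]\odot(CF)'=L^{1}=CF\odot(CF)'$ and then cancel $(CF)'=\widetilde{F'}$ via Lemma~\ref{Lemma2}(i). The only cosmetic difference is that the paper quotes equality~(\ref{Thm6a}) directly, whereas you reconstitute it from the intermediate identity $CE\odot(CF)'=(\widetilde{G})'$ together with the Lozanovski{\u{\i}} factorization of $\widetilde{G}$; your explicit check of the Fatou and normability hypotheses for Lemma~\ref{Lemma2}(i) is a welcome addition that the paper leaves implicit.
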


\begin{proof}
(i) Let $E,F$ be symmetric Banach function spaces on $I = (0, \infty )$. Applying equality (\ref{Thm6a}) we get 
$CE \odot \widetilde{F^{\prime}} \odot \widetilde{M(E, F)} = L^{1}$. 
By our Theorem \ref{mult-cesaro} and Theorem 2 from \cite{LM15} we conclude that
$$
CE \odot M(CE, CF) \odot \widetilde{F^{\prime}} = L^{1} = CF \odot (CF)^{\prime} = CF \odot \widetilde{F^{\prime}},
$$
whence, by Lemma \ref{Lemma2}(i),
$$
CE \odot M(CE, CF) = CF.
$$
The proof in the sequence case is the same.
\end{proof}

{Since factorization of Lebesgue spaces (Example \ref{Ces_p}) and Orlicz
spaces (Theorem 2 in \cite{LT17}, cf. also Theorem 9 and Corollary 8 in \cite%
{KLM14}) is known, it is easy to conclude the respective factorization of Ces%
\`{a}ro function spaces $Ces_{p}$, Ces\`{a}ro--Orlicz function spaces $%
Ces_{\varphi }.$ Note that we may consider also different weighted Ces\`{a}%
ro function spaces} 
\begin{equation*}
Ces_{p}\left( w\right) =CL^{p}\left( w\right) =\left\{ x\in L^{0}:xw\in
CL^{p}\right\} =\left\{ x\in L^{0}:H\left\vert xw\right\vert \in
L^{p}\right\}
\end{equation*}%
or%
\begin{equation*}
C\left( L^{p}\left( w\right) \right) =\left\{ x\in L^{0}:H\left\vert
x\right\vert \in L^{p}\left( w\right) \right\} =\left\{ x\in
L^{0}:wH\left\vert x\right\vert \in L^{p}\right\} .
\end{equation*}

Then applying our results one can conclude the respective factorization of
spaces $Ces_{p}\left( w\right) $ and $C\left( L^{p}\left( t^{\alpha }\right)
\right) .$ 

\section{Acknowledgements}

The author Pawe\l\ Kolwicz and Karol Le\'{s}nik are supported by the Ministry of Science and Higher Education
of Poland, grant number 04/43/DSPB/0094.

\vspace{3mm}

\noindent {\footnotesize Pawe\l\ Kolwicz and Karol Le\'{s}nik, Institute of
Mathematics, Faculty of Electrical Engineering,\newline
Pozna\'{n} University of Technology, ul. Piotrowo 3a, 60-965 Pozna\'{n},
Poland }\newline
\textit{E-mails:} ~\texttt{pawel.kolwicz@put.poznan.pl, klesnik@vp.pl}%
\newline

\vspace{-1mm}

\noindent {\footnotesize Lech Maligranda, Department of Engineering Sciences
and Mathematics\newline
Lule\aa\ University of Technology, SE-971 87 Lule\aa , Sweden}\newline
~\textit{E-mail:} \texttt{lech.maligranda@ltu.se }


\begin{thebibliography}{99}
\bibitem{AM09} {\small S. V. Astashkin and L. Maligranda, \textit{Structure
of Ces\`{a}ro function spaces}, Indag. Math. (N.S.) 20 (2009), no. 3,
329--379. }

\bibitem{BHS91} {\small J. Bastero, H. Hudzik and A. M. Steinberg, \textit{%
On smallest and largest spaces among rearrangement-invariant $p$-Banach
function spaces ($0<p<1$)}, Indag. Math. (N.S.) 2 (1991), no. 3, 283--288. }

\bibitem{Be96} {\small G. Bennett, \textit{Factorizing the Classical
Inequalities}, Mem. Amer. Math. Soc. 120, AMS, Providence 1996. }

\bibitem{BS88} {\small C. Bennett and R. Sharpley, \textit{Interpolation of
Operators}, Academic Press, Boston 1988. }

\bibitem{BM05} {\small E. I. Berezhno{\u{\i}} and L. Maligranda, \textit{%
Representation of Banach ideal spaces and factorization of operators},
Canad. J. Math. 57 (2005), no. 5, 897--940.}

\bibitem{Bu87} {\small M. Buntinas, \textit{\ Products of sequence spaces},
Analysis 7 (1987), 293--304.}

\bibitem{BG87} {\small M. Buntinas and G. Goes, \textit{\ Products of
sequence spaces and multipliers}, Radovi Mat. 3 (1987), 287--300.}

\bibitem{CS14} {\small F. Cabello S\'{a}nchez, \textit{Pointwise tensor
products of function spaces}, J. Math. Anal. Appl. 418 (2014), 317--335. }

\bibitem{CS17} {\small F. Cabello S\'{a}nchez, \textit{Factorization in
Lorentz spaces, with an application to centralizers}, J. Math. Anal. Appl.
446 (2017), no. 2, 1372--1392. }

\bibitem{Ca64} {\small A. P. Calder\'{o}n, \textit{Intermediate spaces and
interpolation, the complex method}, Studia Math. 24 (1964), 113--190. }

\bibitem{CM96a} {\small J. Cerd\`{a} and J. Mart\'{\i}n, \textit{%
Interpolation of operators on decereasing functions}, Math. Scand. 78
(1996), 233--245. }

\bibitem{CM96b} {\small J. Cerd\`{a} and J. Mart\'{\i}n, \textit{%
Interpolation of some cones of function spaces}, in: Interaction Between
Functional Analysis, Harmonic Analysis, and Probability (Columbia, MO,
1994), Lecture Notes in Pure and Appl. Math. 175, Dekker, New York 1996,
145--151. }

\bibitem{Cw73} {\small M. Cwikel, \textit{On the conjugate of some function
spaces}, Studia Math. 45 (1973), 49--55. }

\bibitem{Cw75} {\small M. Cwikel, \textit{The dual of Weak $L^{1}$}, Ann.
Inst. Fourier (Grenoble), 25 (1975), 81--125. }

\bibitem{CKMP} {\small M. Cwikel, A. Kami\'{n}ska, L. Maligranda and L.
Pick, \textit{Are generalized Lorentz \textquotedblleft spaces" really
spaces?}, Proc. Amer. Math. Soc. 132 (2004), no. 12, 3615--3625. }

\bibitem{CNS03} {\small M. Cwikel, P. G. Nilsson and G. Schechtman, \textit{%
Interpolation of weighted Banach lattices. A characterization of relatively
decomposable Banach lattices}, Mem. Amer. Math. Soc. 165 (2003), vi+127 pp. }

\bibitem{CS72} {\small M. Cwikel and Y. Sagher, \textit{$L(p,\infty )^{\ast
} $}, Indiana Univ. Math. J. 21 (1972), no. 9, 781--786. }

\bibitem{Fo11} {\small P. Foralewski, \textit{Some fundamental geometric and
topological properties of generalized Orlicz--Lorentz function spaces},
Math. Nachr. 284 (2011), no. 8--9, 1003--1023. }

\bibitem{Fo12} {\small P. Foralewski, \textit{On some geometric properties
of generalized Orlicz--Lorentz function spaces}, Nonlinear Anal. 75 (2012),
no. 17, 6217--6236. }

\bibitem{Gr08} {\small L. Grafakos, \textit{Classical Fourier Analysis}, 2nd
Edition, Springer, New York 2008. }

\bibitem{GrEr} {\small K.-G. Grosse-Erdmann, \textit{The Blocking Technique,
Weighted Mean Operators and Hardy's Inequality}, Lecture Notes in Math.
1679, Springer--Verlag, Berlin 1998. }

\bibitem{Ha70} {\small A. Haaker, \textit{On the conjugate space of Lorentz
space}, Technical Report, Lund 1970, 1--23; reprinted as: A. Sparr, \textit{%
On the conjugate space of the Lorentz space $L(\phi ,q)$}, Contemporary
Math. 445 (2007), 313--336.}

\bibitem{Hu66} {\small R. A. Hunt, \textit{On $L(p,q)$ spaces}, Enseignement
Math. 12 (1966), no.2, 249--276. }

\bibitem{Ka84} {\small N. J. Kalton, \textit{Convexity conditions for
non-locally convex lattices}, Glasgow Math. J. 25 (1984), 141--152. }

\bibitem{Ka92} {\small N. J. Kalton, \textit{Differentials of complex
interpolation processes for K\"{o}the function spaces}, Trans. Amer. Math.
Soc. 333 (1992), 479--529. }

\bibitem{Ka95} {\small N. J. Kalton, \textit{The basic sequence problem},
Studia Math. 116 (1995), no. 2, 167--187. }

\bibitem{KPR84} {\small N. J. Kalton, N. T. Peck and J. W. Roberts, \textit{%
An F-Space Sampler}, London Math. Society Lecture Note Series 89, Cambridge
Univ. Press, Cambridge 1984. }

\bibitem{KM04} {\small A. Kami\'{n}ska and L. Maligranda, \textit{Order
convexity and concavity of Lorentz spaces $\Lambda _{p,w},\ 0<p<\infty $},
Studia Math. 160 (2004), no. 3, 267--286.}

\bibitem{KMP03} {\small A. Kami\'{n}ska, L. Maligranda and L. E. Persson, 
\textit{Indices, convexity and concavity of Calder\'{o}n--Lozanovski{\u{\i}}
spaces}, Math. Scand. 92 (2003), 141--160.}

\bibitem{KM07} {\small A. Kami\'{n}ska and M. Masty{\l }o, \textit{Abstract
duality Sawyer formula and its applications}, Monatsh. Math. 151 (2007), no.
3, 223--245. }

\bibitem{KR09} {\small A. Kami\'{n}ska and Y. Raynaud, \textit{Isomorphic
copies in the lattice $E$ and its symmetrization $E^{(\ast )}$ with
applications to Orlicz--Lorentz spaces}, J. Funct. Anal. 257 (2009), no. 1,
271--331. }

\bibitem{KA77} {\small L. V. Kantorovich and G. P. Akilov, \textit{%
Functional Analysis}, Nauka, Moscow 1977 (Russian); English transl. Pergamon
Press, Oxford--Elmsford, New York 1982. }

\bibitem{Ko15} {\small P. Kolwicz, \textit{Local structure of generalized
Orlicz--Lorentz function spaces}, Comment. Math. 55 (2015), no. 2, 211--227. 
}

\bibitem{Ko16} {\small P. Kolwicz, \textit{ Local structure of symmetrizations $%
E^{(\ast )}$ with applications }, J. Math. Anal. Appl. 440 (2016),
no. 2, 810--822. }

\bibitem{KLM13} {\small P. Kolwicz, K. Le\'{s}nik and L. Maligranda, \textit{%
Pointwise multipliers of Calder\'{o}n--Lozanovski{\u{\i}} spaces}, Math.
Nachr. 286 (2013), no. 8--9, 876--907. }

\bibitem{KLM14} {\small P. Kolwicz, K. Le\'{s}nik and L. Maligranda, \textit{%
Pointwise products of some Banach function spaces and factorization}, J.
Funct. Anal. 266 (2014), no. 2, 616--659. }

\bibitem{KPS82} {\small S. G. Krein, Yu. I. Petunin, and E. M. Semenov, 
\textit{Interpolation of Linear Operators}, Amer. Math. Soc., Providence
1982 [Russian version Nauka, Moscow 1978]. }

\bibitem{KMP07} {\small A. Kufner, L. Maligranda and L.-E. Persson, \textit{%
The Hardy Inequality. About its History and Some Related Results},
Vydavatelsk\'{y} Servis, Plze\'{n} 2007. }

\bibitem{LM15} {\small K. Le\'{s}nik and L. Maligranda, \textit{Abstract
Ces\`{a}ro spaces: Duality}, J. Math. Anal. Appl. 424 (2015), no. 2, 932--951. }

\bibitem{LM16} {\small K. Le\'{s}nik and L. Maligranda, \textit{%
Interpolation of abstract Ces\`{a}ro, Copson and Tandori spaces}, Indag. Math.
27 (2016), 764--785. }

\bibitem{LT17} {\small K. Le\'{s}nik and J. Tomaszewski, \textit{Pointwise
mutipliers of Orlicz function spaces and factorization}, Positivity 21
(2017), 1563--1573. }

\bibitem{LT79} {\small J. Lindenstrauss and L. Tzafriri, \textit{Classical
Banach Spaces, II. Function Spaces}, Springer-Verlag, Berlin--New York 1979. 
}

\bibitem{Lo73} {\small G. Ja. Lozanovski{\u{\i}}, \textit{Certain Banach
lattices. IV}, Sibirsk. Mat. Zh. 14 (1973), 140--155 (in Russian); English
transl. in: Siberian. Math. J. 4 (1973), 97--108. }

\bibitem{Lo78} {\small G. Ja. Lozanovski{\u{\i}}, \textit{Transformations of
ideal Banach spaces by means of concave functions}, in: \textquotedblleft
Qualitative and Approximate Methods for the Investigation of Operator
Equations", no. 3, Yaroslav. Gos. Univ., Yaroslavl 1978, 122--148 (Russian). 
}

\bibitem{Ma89} {\small L. Maligranda, \textit{Orlicz Spaces and Interpolation%
}, Seminars in Mathematics 5, University of Campinas, Campinas SP, Brazil
1989.}

\bibitem{Ma04} {\small L. Maligranda, \textit{Type, cotype and convexity
properties of quasi-Banach spaces}, in: \textquotedblleft Banach and
Function Spaces", Proc. of the Internat. Symp. on Banach and Function Spaces
(Oct. 2--4, 2003, Kitakyushu-Japan), Editors M. Kato and L. Maligranda,
Yokohama Publ. 2004, 83--120. }

\bibitem{Ma08} {\small L. Maligranda, \textit{Tosio Aoki (1910--1989)}, in:
\textquotedblleft Banach and Function Spaces", Proc. of the Internat. Symp.
on Banach and Function Spaces (14--17 Sept. 2006, Kitakyushu-Japan), Editors
M. Kato and L. Maligranda, Yokohama Publ. 2008, 1--23. }

\bibitem{MP89} {\small L. Maligranda and L. E. Persson, \textit{Generalized
duality of some Banach function spaces}, Indag. Math. 51 (1989), no. 3,
323--338. }

\bibitem{Na95} {\small E. Nakai, \textit{Pointwise multipliers}, Memoirs of
The Akashi College of Technology 37 (1995), 85--94. }

\bibitem{Na96} {\small E. Nakai, \textit{Pointwise multipliers on the
Lorentz spaces}, Mem. Osaka Kyoiku Univ. III Natur. Sci. Appl. Sci. 45
(1996), 1--7. }

\bibitem{Na17} {\small E. Nakai, \textit{Pointwise multipliers on several
function spaces -- a survey}, Linear and Nonlinear Analysis, Special Issue
on ISBFS 2015, 3 (2017), no. 1, 27--59. }

\bibitem{Ni85} {\small P. Nilsson, \textit{Interpolation of Banach lattices}%
, Studia Math. 82 (1985), no. 2, 135--154. }

\bibitem{ORS08} {\small S. Okada, W. J. Ricker and E. A. S\'{a}nchez P\'{e}%
rez, \textit{Optimal Domain and Integral Extension of Operators. Acting in
Function Spaces}, Birkh\"{a}user Verlag, Basel 2008. }

\bibitem{Pel03} {\small V. V. Peller, \textit{Hankel Operators and Their
Applications}, Springer, Berlin 2003.}

\bibitem{Ryff70} {\small J. V. Ryff, \textit{Measure preserving
transformation and rearrangements}, J. Math. Anal. Appl. 31 (1970),
449--458. }

\bibitem{Sc1} {\small A. R. Schep, \textit{Products and factors of Banach
function spaces}, Positivity 14 (2010), 301--319. }

\bibitem{Za83} {\small A. C. Zaanen, \textit{Riesz Spaces II},
North-Holland, Amsterdam 1983. }
\end{thebibliography}
\end{document}